\numberwithin{equation}{section}
\newcolumntype{L}[1]{>{\raggedright\let\newline\\\arraybackslash\hspace{0pt}}m{#1}}
\newcolumntype{C}[1]{>{\centering\let\newline\\\arraybackslash\hspace{0pt}}m{#1}}
\newcolumntype{R}[1]{>{\raggedleft\let\newline\\\arraybackslash\hspace{0pt}}m{#1}}
\DeclareMathAlphabet{\mathcalalt}{OMS}{cmsy}{m}{n}
\theoremstyle{plain}
\newtheorem{theorem}{Theorem}[section]
\newtheorem{proposition}[theorem]{Proposition}
\newtheorem{corollary}[theorem]{Corollary}
\newtheorem{lemma}[theorem]{Lemma}
\theoremstyle{definition}
\newtheorem{remark}[theorem]{Remark}
\newtheorem{example}[theorem]{Example}
\newtheorem{examples}[theorem]{Examples}
\numberwithin{table}{theorem}
\newcommand{\abs}[1]{\lvert#1\rvert}
\newcommand{\norm}[1]{\lVert#1\rVert}
\newcommand{\uppars}[1]{\textup{(}#1\textup{)}}
\newcommand{\falg}{$\!f\!$-algebra}
\newcommand{\falgs}{\falg s}
\newcommand{\fsubalg}{$\!f\!$-subalgebra}
\newcommand{\RR}{\mathbb R}
\DeclareMathOperator{\Span}{Span}
\newcommand{\pos}[1]{{#1}^+}
\newcommand{\posE}{\pos{E}}
\newcommand{\ob}{\mathrm{ob}}
\newcommand{\oc}{\mathrm{oc}}
\newcommand{\obops}{\mathcal{L}_{\ob}}
\newcommand{\ocops}{\mathcal{L}_{\oc}}
\newcommand{\opalg}{{\mathcal A}} 
\newcommand{\opalgtwo}{{\mathcal B}} 
\newcommand{\opfont}{\mathcal}
\newcommand{\oplat}{{\opfont E}}
\newcommand{\oplattwo}{{\opfont F}}
\newcommand{\opset}{{\opfont S}}
\newcommand{\alg}{\opfont A}
\newcommand{\convstrufont}{\mathcal}
\newcommand{\convstru}{{\convstrufont C}}
\newcommand{\odual}[1]{{#1}^{\thicksim}}
\newcommand{\ocdual}[1]{{#1}^\thicksim_{\oc}}
\newcommand{\Ell}{\mathrm{L}}
\newcommand{\conv}{\xrightarrow}
\newcommand{\convwithoverset}[1]{\conv{#1}}
\renewcommand{\o}{\ensuremath{\mathrm{o}}}
\newcommand{\uo}{\ensuremath{\mathrm{uo}}}
\newcommand{\STRONG}{\ensuremath{\mathrm{S}}}
\newcommand{\SO}{\ensuremath{\mathrm{SO}}}
\newcommand{\SUO}{\ensuremath{\mathrm{SUO}}}
\newcommand{\uoLt}{{\widehat{\tau}}}
\newcommand{\SuoLt}{{S\widehat{\tau}}}
\newcommand{\oconv}{\convwithoverset{\o}}
\newcommand{\uoconv}{\convwithoverset{\uo}}
\newcommand{\SOconv}{\convwithoverset{\SO}}
\newcommand{\SUOconv}{\convwithoverset{\SUO}}
\newcommand{\tauconv}{\convwithoverset{\tau}}
\newcommand{\orthcom}[1]{{#1}^{\prime}_{\Orth}}
\newcommand{\netgen}[2]{{(#1)_{#2}}}
\newcommand{\indexsetfont}{\mathcalalt}
\newcommand{\is}{\indexsetfont{A}}
\newcommand{\net}{\netgen{x_\alpha}{\alpha\in\is}}
\newcommand{\opnet}{\netgen{S_\alpha}{\alpha\in\is}}
\newcommand{\istwo}{\indexsetfont{B}}
\newcommand{\nettwo}{\netgen{x_\beta}{\beta\in\istwo}}
\newcommand{\opnettwo}{\netgen{T_\beta}{\beta\in\istwo}}
\newcommand{\seq}[2]{(#1)_{#2=1}^\infty}
\newcommand{\Ri}[1]{\rho_{#1}}
\newcommand{\Le}[1]{\lambda_{#1}}
\newcommand{\uoLtop}{uo-Lebes\-gue topology}
\newcommand{\uoLtops}{uo-Lebes\-gue topologies}
\newcommand{\necun}{\textup{(}nec\-es\-sar\-i\-ly unique\textup{)}}
\DeclareMathOperator{\Orth}{Orth}
\newcommand{\Id}{I}
\newcommand{\oadhtext}{o-ad\-her\-ence}
\newcommand{\uoadhtext}{uo-ad\-her\-ence}
\newcommand{\adh}[2]{a_{#1}(#2)}
\newcommand{\sadh}[2]{a_{\sigma #1}(#2)}
\newcommand{\oadh}[1]{\adh{\o}{#1}}
\newcommand{\uoadh}[1]{\adh{\uo}{#1}}
\newcommand{\soadh}[1]{\sadh{\o}{#1}}
\newcommand{\suoadh}[1]{\sadh{\uo}{#1}}
\newcommand{\Soadh}[1]{\adh{\SO}{#1}}
\newcommand{\SUOadh}[1]{\adh{\SUO}{#1}}
\numberwithin{table}{theorem}
\setlist[enumerate,1]{label=\textup{(\arabic*)},ref=\textup{(\arabic*)}}
\setlist[enumerate,2]{label=\textup{(\alph*)},ref=\textup{(\alph*)}}
\setlist[enumerate,3]{label=\textup{(\roman*)},ref=\textup{(\roman*)}}
\setlist[enumerate,4]{label=\textup{(\Alph*)},ref=\textup{(\Alph*)}}
\newlist{enumerate_arabic}{enumerate}{1}
\setlist[enumerate_arabic,1]{label=\textup{(\arabic*)},ref=\textup{(\arabic*)}}
\newlist{enumerate_alpha}{enumerate}{1}
\setlist[enumerate_alpha,1]{label=\textup{(\alph*)},ref=\textup{(\alph*)}}
\newlist{enumerate_roman}{enumerate}{1}
\setlist[enumerate_roman,1]{label=\textup{(\roman*)},ref=\textup{(\roman*)}}
\newlist{enumerate_Alpha}{enumerate}{1}
\setlist[enumerate_Alpha,1]{label=\textup{(\Alph*)},ref=\textup{(\Alph*)}}
\crefname{theorem}{Theorem}{Theorems}
\crefname{proposition}{Proposition}{Propositions}
\crefname{lemma}{Lemma}{Lemmas}
\crefname{corollary}{Corollary}{Corollaries}
\crefname{conjecture}{Conjecture}{Conjectures}
\crefname{definition}{Definition}{Definitions}
\crefname{example}{Example}{Examples}
\crefname{examples}{Examples}{Examples}
\crefname{remark}{Remark}{Remarks}
\crefname{assumption}{Assumption}{Assumptions}
\crefname{hypothesis}{Hypothesis}{Hypotheses}
\crefname{question}{Question}{Questions}
\crefname{problem}{Problem}{Problems}
\crefname{task}{Task}{Tasks}
\crefname{addendum}{Addendum}{Addenda}
\crefname{idea}{Idea}{Ideas}
\crefname{suggestion}{Suggestion}{Suggestions}
\crefname{context}{Context}{Contexts}
\crefname{exercise}{Exercise}{Exercises}
\crefname{section}{Section}{Sections}
\crefname{subsection}{Section}{Sections}
\crefname{subsubsection}{Section}{Sections}
\crefname{equation}{equation}{equations}
\crefname{enumi}{part}{parts}
\crefname{enumii}{part}{parts}
\crefname{enumiii}{part}{parts}
\crefname{enumiv}{part}{parts}
\begin{document}
\title[Convergence structures and Hausdorff uo-Lebesgue topologies]{
Convergence structures and Hausdorff uo-Lebesgue topologies on vector lattice algebras of operators}

\author{Yang Deng}
\address{Yang Deng; School of Economic Mathematics, Southwestern University of Finance and Economics, Chengdu, Sichuan 611130, People's Republic of China}
\email{dengyang@swufe.edu.cn}

\author{Marcel de Jeu}
\address{Marcel de Jeu; Mathematical Institute, Leiden University, P.O.\ Box 9512, 2300 RA Leiden, the Netherlands;
	and Department of Mathematics and Applied Mathematics, University of Pretoria, Cor\-ner of Lynnwood Road and Roper Street, Hatfield 0083, Pretoria, South Africa}
\email{mdejeu@math.leidenuniv.nl}

\keywords{Vector lattice algebra of operators, orthomorphism, order convergence, unbounded order convergence, uo-Lebesgue topology}
\subjclass[2010]{Primary: 47B65. Secondary: 06F25, 46A16, 46A19}

\begin{abstract}
	A vector sublattice of the order bounded operators on a Dedekind complete vector lattice can be supplied with the convergence structures of order convergence, strong order convergence, unbounded order convergence, strong unbounded order convergence, and, when applicable, convergence with respect to a Hausdorff uo-Lebesgue topology and strong convergence with respect to such a topology. We determine the general validity of the implications between these six convergences on the order bounded operator and on the orthomorphisms. Furthermore, the continuity of left and right multiplications with respect to these convergence structures on the order bounded operators, on the order continuous operators, and on the orthomorphisms is investigated, as is their simultaneous continuity. A number of results are included on the equality of adherences of vector sublattices of the order bounded operators and of the orthomorphisms with respect to these convergence structures. These are consequences of more general results for vector sublattices of arbitrary Dedekind complete vector lattices.\\
The special attention that is paid to vector sublattices of the orthomorphisms is motivated by explaining their relevance for representation theory on vector lattices.
\end{abstract}


\maketitle

\section{Introduction and overview}\label{sec:introduction_and_overviews}

\noindent
In an earlier paper \cite{deng_de_jeu:2021} the authors studied aspects of locally solid linear topologies on vector lattices of order bounded linear operators between vector lattices. Particular attention was paid to the possibility of introducing a Hausdorff \uoLtop\ on such vector lattices.

Such vector lattices of operators carry two, and possibly three, natural convergence structures (order convergence, unbounded order convergence, and, when applicable, convergence with respect to the Hausdorff \uoLtop), as they can be defined for arbitrary vector lattices. For vector lattices of operators, however, besides these `uniform' convergence structures, there are also two, or possibly three, corresponding `strong' counterparts that can be defined in the obvious way. Several relations between the resulting six convergence structures on vector lattices of operators were also investigated in \cite{deng_de_jeu:2021}. In view of their relevance for representation theory in vector lattices, special emphasis was put on the orthomorphisms on a Dedekind complete vector lattice. In this case, implications between convergences hold that do not hold for more general vector lattices of operators. Furthermore, it was shown that the orthomorphisms are not only order continuous, but also continuous with respect to unbounded order convergence on the vector lattice and with respect to a possible Hausdorff \uoLtop\ on it.

Apart from their intrinsic interest, the results in \cite{deng_de_jeu:2021} can be viewed as a part of the groundwork that has to be done in order to facilitate further developments of aspects of the theory of vector lattices of operators. The questions that are asked are natural and basic, but even so the answers are often more easily formulated than proved.

In the present paper, we take this one step further and study these six convergence structures in the context of vector lattice \emph{algebras} of order bounded linear operators on a Dedekind complete vector lattice. Also here there are many natural questions of a basic nature that need to be answered before one can expect to get much further with the theory of such vector lattice algebras and with representation theory on vector lattices. For example, is the left multiplication by a fixed element continuous on the order bounded linear operators with respect to unbounded order convergence? Is the multiplication on the order continuous linear operators simultaneously continuous with respect to a possible Hausdorff \uoLtop\ on it? Given a vector lattice subalgebra of the order continuous linear operators, is the closure (we shall actually prefer to speak of the `adherence') in the order bounded linear operators with respect to strong unbounded order convergence again a vector lattice subalgebra? Is there a condition, sufficiently lenient to be of practical relevance, under which the order adherence of a vector lattice subalgebra of the orthomorphisms coincides with its closure in a possible Hausdorff \uoLtop? Building on \cite{deng_de_jeu:2021}, we shall answer these questions in the present paper, together with many more similar ones. As indicated, we hope and expect that, apart from their intrinsic interest, this may serve as a stockpile of basic, but non-elementary, results that will facilitate a further development of the theory of vector lattice algebras of operators and of representation theory in vector lattices.

\smallskip

\noindent
This paper is organised as follows.

\cref{sec:preliminaries} contains the necessary notation, definitions, and conventions, as well as a few preparatory results that are of interest in their own right. \cref{res:orthisorth_linear} shows that, in many cases of practical interest, a unital positive linear representation of a unital \falg\  on a vector lattice is always an action by orthomorphisms. Its consequence \cref{res:orthisorth} specialises this to the case of left and right multiplications of order bounded operators by orthomorphisms.

In \cref{sec:implications_between_convergences}, we study the validity of each of the 36 possible implications between the 6 convergences that we consider on vector lattice algebras of order bounded linear operators on a Dedekind complete vector lattice. We do this for the order bounded linear operators as well as for the orthomorphisms. The results that are already in \cite{deng_de_jeu:2021} and a few additional ones are sufficient to complete the \cref{table:convergences_general,table:convergences_orthomorphisms}.

\cref{sec:continuity_in_one_variable} contains our results on the continuity of the left and right multiplications by a fixed element with respect to each of the six convergence structures on the order bounded linear operators. For this, we distinguish between the multiplication by an arbitrary order bounded linear operator, by an order continuous one, and by an orthomorphism. By giving (counter) examples, we show that our results are sharp in the sense that, whenever we state that continuity holds for multiplication by, e.g., an orthomorphism, it is no longer generally true for an arbitrary order continuous linear operator, i.e., for an operator in the `next best class'. We also consider these questions for the orthomorphisms. The results are contained in \cref{table:right_multiplication,table:left_multiplication,table:multiplication_on_orthomorphisms}.

In \cref{sec:simultaneous_continuity}, we investigate the simultaneous continuity of the multiplication with respect to each of the six convergence structures. When there is simultaneous continuity, the adherence of a subalgebra is, of course, again a subalgebra. With only one exception (see \cref{res:Soad_is_alg_VL} and \cref{rem:Soad_is_alg}), we give (counter) examples to show that our conditions for the adherence of an algebra to be a subalgebra again are `sharp' in the sense as indicated above for \cref{sec:continuity_in_one_variable}.

\cref{sec:equality_of_adherences} is dedicated to the equality of various adherences of vector sublattices and vector lattice subalgebras. It is also indicated there how representation theory in vector lattices leads quite naturally to the study of vector lattice subalgebras of the orthomorphisms (see the  \cref{res:commutant_of_bounded_operators_is_lattice,res:commutant_of_order_continuous_operators_is_lattice}), thus motivating in more detail the special attention that is paid to the orthomorphisms in \cite{deng_de_jeu:2021} and in the present paper.

\section{Preliminaries}\label{sec:preliminaries}

\noindent 
In this section, we give the notation, conventions, and definitions used in the sequel. We also include a few preliminary results.

All vector spaces are over the real numbers and all vector lattices are supposed to be Archimedean. We let  $\posE$ denote the positive cone of a vector lattice $E$.
The identity operator on a vector lattice $E$ will be denoted by $\Id$, or by $I_E$ when the context requires this. The characteristic function of a set $S$ is denoted by $\chi_S$.

Let $E$ be a vector lattice, and let $x\in E$. We say that a net $\net$ in $E$ is \emph{order convergent to $x\in E$} (denoted by $x_\alpha\oconv x$) when there exists a net $\nettwo$ in $E$ such that $y_\beta\downarrow 0$ and with the property that, for every $\beta_0\in\istwo$, there exists an $\alpha_0\in \is$ such that $\abs{x-x_\alpha}\leq y_{\beta_0}$ whenever $\alpha$ in $\is$ is such that $\alpha\geq\alpha_0$. Note that, in this definition, the index sets $\is$ and $\istwo$ need not be equal.

A net $\net$ in a vector lattice $E$ is said to be \emph{unbounded order convergent} to an element $x$ in $E$  (denoted by $x_\alpha\uoconv x$) when $\abs{x_\alpha-x}\wedge y\oconv 0$ in $E$ for all $y\in \posE$. Order convergence implies unbounded order convergence to the same limit. For order bounded nets, the two notions coincide.

Let $E$ and $F$ be vector lattices. The order bounded linear operators from $E$ into $F$ will be denoted by $\obops(E,F)$; this is a Dedekind complete vector lattice when $F$ is.  We write $\odual{E}$ for $\obops(E,\mathbb R)$. A linear operator  $T: E\to F$ between two vector lattices $E$ and $F$ is \emph{order continuous} when, for every net $\net$ in $E$,  the fact that $x_\alpha\oconv 0$ in $E$ implies that $Tx_\alpha\oconv 0$ in $F$. An order continuous linear operator between two vector lattices is automatically order bounded; see \cite[Lemma~1.54]{aliprantis_burkinshaw_POSITIVE_OPERATORS_SPRINGER_REPRINT:2006}, for example. The order continuous linear operators from $E$ into $F$ will be denoted by $\ocops(E,F)$. We write $\ocdual{E}$ for $\ocops(E,\RR)$.

Let $F$ be a vector sublattice of a vector lattice $E$. Then $F$ is a \emph{regular vector sublattice of $E$} when the inclusion map from $F$ into $E$ is order continuous. Ideals are regular vector sublattices. For a net in a regular vector sublattice $F$ of $E$, its unbounded order convergence in $F$ and in $E$ are equivalent; see \cite[Theorem~3.2]{gao_troitsky_xanthos:2017}.

An \emph{orthomorphism} on a vector lattice $E$ is a band preserving order bounded linear operator. We let $\Orth(E)$ denote the orthomorphisms on $E$. Orthomorphisms are automatically order continuous; see \cite[Theorem~2.44]{aliprantis_burkinshaw_POSITIVE_OPERATORS_SPRINGER_REPRINT:2006}. An overview of some basic properties of the orthomorphisms that we shall use can be found in the first part of \cite[Section~6]{deng_de_jeu:2021}, with detailed references included.

A topology $\tau$ on a vector lattice $E$ is a \emph{\uoLtop} when it is a (not necessarily Hausdorff) locally solid linear topology on $E$ such that, for a net $\net$ in $E$, the fact that $x_\alpha\uoconv 0$ in $E$ implies that $x_\alpha\tauconv 0$. For the general theory of locally solid linear topologies on vector lattices we refer to \cite{aliprantis_burkinshaw_LOCALLY_SOLID_RIESZ_SPACES_WITH_APPLICATIONS_TO_ECONOMICS_SECOND_EDITION:2003}. A vector lattice need not admit a \uoLtop, and it admits at most one Hausdorff \uoLtop; see \cite[Propositions~3.2,~3.4, and~6.2]{conradie:2005} or \cite[Theorems~5.5 and~5.9]{taylor:2019}). In this case, this unique Hausdorff \uoLtop\ is denoted by $\uoLt_E$.

The following fact will often be used in the present paper.

\begin{theorem}\label{res:all_three_or_none}
	Let $E$ be a Dedekind complete vector lattice. The following are equivalent:
	\begin{enumerate}
		\item\label{res:all_three_or_none1} $E$ admits a \necun\ Hausdorff \uoLtop;
		\item\label{res:all_three_or_none2} $\Orth(E)$ admits a \necun\ Hausdorff \uoLtop;
		\item\label{res:all_three_or_none3} $\obops(E)$ admits a \necun\ Hausdorff \uoLtop.
	\end{enumerate}
\end{theorem}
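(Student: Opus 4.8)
The plan is to pass from the \uoLtop\ to the underlying Lebesgue topology, to build a topology on $\obops(E)$ by hand, and to connect $\Orth(E)$ with $E$ through the orthomorphism--multiplier correspondence. Throughout I would use the fact (see \cite{taylor:2019}) that a vector lattice admits a \necun\ Hausdorff \uoLtop\ if and only if it admits a Hausdorff Lebesgue topology, the \uoLtop\ then being the unbounded version of any such Lebesgue topology. Since this reduction applies verbatim to $E$, $\Orth(E)$, and $\obops(E)$, it suffices to prove the equivalence of the existence of Hausdorff \emph{Lebesgue} topologies on the three spaces. I also record two structural facts. As $E$ is Dedekind complete, both $\Orth(E)$ and $\obops(E)$ are Dedekind complete vector lattices, and $\Orth(E)$ is an order ideal of $\obops(E)$: if $S\in\obops(E)$ and $T\in\Orth(E)$ satisfy $\abs S\le\abs T$, then for disjoint $x,y\in\posE$ one has $\abs{Sx}\le\abs{S}(x)\le\abs{T}(x)$, and $\abs{T}(x)$ is disjoint from $y$ because $\abs T\in\Orth(E)$ is band preserving; hence $S$ is band preserving, so $S\in\Orth(E)$. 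In particular $\Orth(E)$ is a regular vector sublattice of $\obops(E)$.

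For $\uppartref{res:all_three_or_none1}\Rightarrow\uppartref{res:all_three_or_none3}$ I would manufacture a Hausdorff Lebesgue topology on $\obops(E)$ out of one on $E$. Writing a Hausdorff Lebesgue topology on $E$ as the locally solid topology generated by a separating family $\{p_i\}_{i\in\mathcal I}$ of Riesz pseudonorms, I set $q_{i,x}(S)\coloneqq p_i(\abs{S}x)$ for $S\in\obops(E)$, $i\in\mathcal I$, and $x\in\posE$. Using $\abs{S}x\ge0$ together with the monotonicity, subadditivity, and absolute homogeneity of $p_i$ and the estimate $\abs{S+T}x\le\abs{S}x+\abs{T}x$, each $q_{i,x}$ is a Riesz pseudonorm, so the $q_{i,x}$ generate a locally solid topology $\rho$. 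It is Hausdorff, since $q_{i,x}(S)=0$ for all $i,x$ forces $\abs{S}x=0$ for every $x\in\posE$, hence $S=0$; and it is Lebesgue, since $S_\alpha\downarrow0$ in $\obops(E)$ gives $\abs{S_\alpha}x=S_\alpha x\downarrow0$ in $E$ for each $x\in\posE$ by the order continuity of evaluation at a positive element on the Dedekind complete space $\obops(E)$ (Riesz--Kantorovich, \cite{aliprantis_burkinshaw_POSITIVE_OPERATORS_SPRINGER_REPRINT:2006}), whence $q_{i,x}(S_\alpha)=p_i(S_\alpha x)\to0$. The essential point is that the \emph{monotone} control $S_\alpha\downarrow0$ descends to $S_\alpha x\downarrow0$; no such control is available for merely \uo-null nets, which is precisely why one works with Lebesgue rather than \uoL\ topologies here. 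The implication $\uppartref{res:all_three_or_none3}\Rightarrow\uppartref{res:all_three_or_none2}$ is then immediate: the restriction of a Hausdorff Lebesgue topology to a regular vector sublattice is again one (a net decreasing to $0$ in the sublattice still decreases to $0$ in the ambient space, by order continuity of the inclusion), and $\Orth(E)$ is such a sublattice of $\obops(E)$.

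For $\uppartref{res:all_three_or_none2}\Rightarrow\uppartref{res:all_three_or_none1}$ I would invoke the orthomorphism--multiplier correspondence. When $E$ has a weak order unit $e$, evaluation $T\mapsto Te$ is a vector lattice isomorphism of $\Orth(E)$ onto an order dense ideal of $E$; combined with the invariance of the existence of a Hausdorff Lebesgue topology under passage between a vector lattice and an order dense sublattice, this gives $\uppartref{res:all_three_or_none2}\Leftrightarrow\uppartref{res:all_three_or_none1}$ in the unital case. For general $E$ I would reduce to this case by choosing a maximal disjoint system $\{e_\lambda\}$ in $\posE$ and passing to the projection bands $B_\lambda$ generated by the $e_\lambda$, each carrying $e_\lambda$ as a weak order unit. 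Existence of a Hausdorff Lebesgue topology is detected band by band: restriction gives one direction, and a product-type topology assembled from the band projections $P_\lambda$ via pseudonorms $x\mapsto s(P_\lambda x)$ gives the other. Applying this both to $E$ and to $\Orth(E)$, where $\{P_\lambda\}\subseteq\Orth(E)^+$ is a maximal disjoint system whose generated bands are isomorphic to $\Orth(B_\lambda)$, and chaining $E\leftrightarrow\{B_\lambda\}\leftrightarrow\{\Orth(B_\lambda)\}\leftrightarrow\Orth(E)$ closes the cycle.

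The main obstacle is the ascent used in the unital step: transferring the existence of a Hausdorff Lebesgue topology from an order dense ideal \emph{back up} to the whole space. Unlike the restriction to a regular sublattice, which is formal, this direction is genuinely nontrivial—one must extend the pseudonorms and control their finiteness—and I would cite it from \cite{deng_de_jeu_UNPUBLISHED:2020b} and the classical Fatou/Lebesgue theory rather than reprove it. The remaining ingredients, namely the Riesz--Kantorovich order continuity of evaluation and the band-decomposition bookkeeping for non-unital $E$, are routine but should be carried out with care, in particular the verification that $\{P_\lambda\}$ is maximal disjoint in $\Orth(E)^+$ (using that disjoint elements of the $f$-algebra $\Orth(E)$ have zero product).
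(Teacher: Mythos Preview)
Your proposal is correct and follows essentially the same route as the paper: the paper's proof is purely citational, and what you have written is precisely an unpacking of those citations---your pseudonorm construction $q_{i,x}(S)=p_i(\lvert S\rvert x)$ is the absolute strong operator topology of \cite[Theorem~4.3]{deng_de_jeu_UNPUBLISHED:2020b} used for $\uppartref{res:all_three_or_none1}\Rightarrow\uppartref{res:all_three_or_none3}$, your restriction argument is \cite[Proposition~5.12]{taylor:2019} for $\uppartref{res:all_three_or_none3}\Rightarrow\uppartref{res:all_three_or_none2}$, and your multiplier/band-decomposition argument is the content of \cite[Proposition~8.2]{deng_de_jeu_UNPUBLISHED:2020b} for $\uppartref{res:all_three_or_none1}\Leftrightarrow\uppartref{res:all_three_or_none2}$. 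The only point that deserves a line of extra care is the verification that the image of $T\mapsto Te$ is indeed an ideal (it is, using that a Dedekind complete $E$ sits as an order dense ideal in its universal completion $E^u$, so that $0\le y\le Te$ forces the multiplication operator $M_y$ to map $E$ back into $E$), but this is a detail rather than a gap.
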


\begin{proof}
	The equivalence of the parts~\ref{res:all_three_or_none1} and~\ref{res:all_three_or_none2} is a part of \cite[Proposition~8.2]{deng_de_jeu:2021}. Part~\ref{res:all_three_or_none1} implies part~\ref{res:all_three_or_none3} by \cite[Theorem~4.3]{deng_de_jeu:2021}, and part~\ref{res:all_three_or_none3} implies part~\ref{res:all_three_or_none2} by \cite[Proposition~5.12]{taylor:2019}.	
\end{proof}

Let $X$ be a non-empty set. As in \cite{deng_de_jeu:2021}, we define a \emph{convergence structure on $X$} to be a non-empty collection $\convstru$ of pairs $(\net,x)$, where $\net$ is a net in $X$ and $x\in X$, such that:
\begin{enumerate}
	\item\label{part:net_convergence_struture_2} when $(\net,x)\in\convstru$, then also $(\nettwo,x)\in\convstru$ for every subnet $\nettwo$ of $\net$.
	\item\label{part:net_convergence_struture_1} when a net $\net$ in $X$ is constant with value $x$, then $(\net,x)\in\convstru$.
\end{enumerate}

Replacing nets by sequences and subnets by subsequences gives the usual sequential convergence structure, as in \cite[Definition~1.7.1]{beattie_butzmann_CONVERGENCE_STRUCTURES_AND_APPLICATIONS_TO_FUNCTIONAL_ANALYSIS:2002}.

\begin{remark}
Our definition of a convergence structures is actually a possible definition of a so-called net convergence structure. The theory of their counterparts, the so-called filter convergence structures, has been canonised in \cite{beattie_butzmann_CONVERGENCE_STRUCTURES_AND_APPLICATIONS_TO_FUNCTIONAL_ANALYSIS:2002}. It is only recently that a definition of a net convergence structure (more sophisticated than ours) has been given that can be shown to yield a natural bijection between the net convergence structures and the filter convergence structures on a set; see \cite{o'brien_troitsky_van_der_walt_UNPUBLISHED:2021}. In this definition, not all index sets for the nets in the structure are admitted, the admissible index are allowed to be merely pre-ordered, and property~\ref{part:net_convergence_struture_2} in the above definition is replaced with two others. We refer to  \cite{o'brien_troitsky_van_der_walt_UNPUBLISHED:2021} for further details, and content ourselves with our definition above that is sufficient for our merely descriptive purposes.
\end{remark}

Suppose that $\convstru$ is a convergence structure on a non-empty set $X$. For a non-empty subset $S\subseteq X$, we define the \emph{$\convstru$-adherence of $S$ in $X$} as
\[
\adh{\convstru}{S}  \coloneqq \{x\in E\hskip -1pt:\hskip -1pt\! \text{ there exists a net }\net \text{ in } S \text{ such that } (\net,x)\in \convstru\}
\]
We set $\adh{\convstru}{\emptyset}\coloneqq\emptyset$. A subset $S$ of $X$ is said to be \emph{$\convstru$-closed} when $\adh{\convstru}{S}=S$. It is evident how define the adherence of a subset in the case of a sequential convergence structure. The following result, which was mentioned in \cite[Section~1]{deng_de_jeu:2021} without proof (see also \cite[Section~8]{deng_de_jeu:2022} for special cases), was already established in a context of order convergence on partially ordered vector spaces as \cite[Theorem~3.1]{van_imhoff_THESIS:2012}. Its sequential version is proved using a similar argument.

\begin{lemma}
	Let $X$ be a non-empty set, and let $\convstru$ be a convergence structure on $X$. Then the $\convstru$-closed subsets of $X$ are the closed sets of a topology $\tau_{\!\convstru}$ on $X$.
\end{lemma}

\begin{proof}
	It is trivial that $\emptyset$ and $X$ are $\convstru$-closed, and it is immediate that an arbitrary intersection of $\convstru$-closed subsets of $X$ is $\convstru$-closed. We claim that $\adh{\convstru}{S_1\cup S_2}=\adh{\convstru}{S_1}\cup\adh{\convstru}{S_2}$ for arbitrary $S_1,S_2\subseteq X$, which implies that finite unions of $\convstru$-closed subsets are again $\convstru$-closed. Since it is obvious that $\adh{\convstru}{S_1\cup S_2}\supseteq\adh{\convstru}{S_1}\cup\adh{\convstru}{S_2}$, we need to show only the reverse inclusion.  We may suppose that $S_1,S_2\neq\emptyset$. Take an $x\in\adh{\convstru}{S_1\cup S_2}$. Then there exists a net $\net$ in $S_1\cup S_2$ such that $(\net,x)\in\convstru$. If there is a tail $(x_\alpha)_{\alpha\geq\alpha_0}$ that is contained in $S_1$, then it follows from property~\ref{part:net_convergence_struture_2} of $\convstru$ that $x\in\adh{\convstru}{S_1}$. If no tail of $\net$ is contained in $S_1$, then the set $\istwo\coloneqq\{\beta\in\is: x_\beta\in S_2\}$ is a co-final subset of $\is$. Hence $\nettwo$ can canonically be viewed as a subnet of $\net$ that is contained in $S_2$, and property~\ref{part:net_convergence_struture_2} implies that $x\in\adh{\convstru}{S_2}$. In both cases, $x\in\adh{\convstru}{S_1}\cup\adh{\convstru}{S_2}$.
\end{proof}

It is not generally true that $\adh{\convstru}{S}$ is $\tau_{\!\convstru}$-closed. We have the following result, the final statement of which was already mentioned without proof for special cases in \cite[Section~8]{deng_de_jeu:2022}. Its sequential version is valid by essentially the same proof.

\begin{lemma}
	Let $X$ be a non-empty set, let $\convstru$ be a convergence structure on $X$, and let $S\subseteq X$. Then
	\[
	S\subseteq \adh{\convstru}{S}\!\subseteq \overline{S}^{\tau_{\!\convstru}}s.
	\]
	Consequently, $\overline{\adh{\convstru}{S}}^{\tau_{\!\convstru}}\!=\overline{S}^{\tau_{\!\convstru}}$. Furthermore, $\adh{\convstru}{S}$ is $\tau_{\!\convstru}$-closed if and only if $\adh{\convstru}{S}=\overline{S}^{\tau_{\!\convstru}}$.
\end{lemma}

\begin{proof}
Property~\ref{part:net_convergence_struture_1} of $\convstru$ implies that $S\subseteq\adh{\convstru}{S}$, and the obvious monotony of the adherence set map implies that $\overline{S}^{\tau_{\!\convstru}}=\adh{\convstru}{\overline{S}^{\tau_{\!\convstru}}}\supseteq \adh{\convstru}{S}$.  The remaining two statements follow easily from the chain of inclusions.
\end{proof}

On a vector lattice $E$, the set of all pairs of order convergent nets and their order limits forms a convergence structure $\convstru_\o$ on $E$. Likewise, there is a convergence structure $\convstru_{\uo}$ on $E$ and, when applicable, a topological convergence structure $\convstru_{\uoLt_E}$. For a subset $S$ of $E$, we shall write $\oadh{S}$ for $\adh{{\convstru_\o}}{S}$,  $\uoadh{S}$ for $\adh{{\convstru_\uo}}{S}$,  and, when applicable, $\overline{S}^{\uoLt_E}$ for $\adh{\convstru_{\uoLt_E}}{S}$. The corresponding sequential convergence structures are denoted by $\convstru_{\sigma\o}$, $\convstru_{\sigma\uo}$, and, when applicable, $\convstru_{\sigma\uoLt_E}$, respectively. There are self-explanatory notations $\soadh{S}$, $\suoadh{S}$, and, when applicable, $\adh{\sigma\uoLt_E}{S}$. We shall also speak of the order adherence (or \o-adherence) of a subset, rather than of its $\convstru_\o$-adherence; etc.  Note that the order adherence $\oadh{S}$ of $S$ is what is called the `order closure' of $S$ in other sources. Since this `order closure' need not be closed in the $\tau_{\convstru_\o}$-topology on $E$, we shall not use this terminology that is prone to mistakes.

Let $E$ and $F$ be vector lattices, where $F$ is Dedekind complete. Suppose that $\oplat$ is a vector sublattice of $\obops(E,F)$. As for general vector lattices, we have the convergence structures $\convstru_\o (\oplat)$, $\convstru_{\uo}(\oplat)$ and, when applicable, a convergence structure $\convstru_{\uoLt_\oplat}$ on $\oplat$. In addition to these `uniform' convergence structures, there are in this case also `strong' ones that we shall now define. Let $\netgen{T_\alpha}{\alpha\in\is}$ be a net in $\oplat$, and let $T\in\oplat$. Then we shall say that $\netgen{T_\alpha}{\alpha\in\is}$ is \emph{strongly order convergent to $T$} (denoted by $T_\alpha\SOconv T$) when $T_\alpha x\oconv T x$ for all $x\in E$. The set of all pairs of strongly order convergent nets in $\oplat$ and their limits forms a convergence structure $\convstru_{\SO}$ on $\oplat$. Likewise, the net is \emph{strongly unbounded order convergent to $T$} (denoted by $T_\alpha\SUOconv T$) when it is pointwise unbounded order convergent to $T$, resulting in a convergence structure $\convstru_{\SUO}$ on $\oplat$. When $E$ admits a Hausdorff \uoLtop\ $\uoLt_E$, then a net is \emph{strongly convergent with respect to $\uoLt_E$ to $T$} (denoted by $T_\alpha\conv{\STRONG\uoLt_E} T$) when it is pointwise $\uoLt_E$ convergent to $T$, yielding to a convergence structure $\convstru_{S\uoLt_{\oplat}}$ on $\oplat$. As for the three convergence structures on general vector lattices, we shall simply write $\SUOadh{\mathcal S}$ for the $\convstru_{\SUO}$-adherence $\adh{\convstru_{\SUO}}{\mathcal S}$ of a subset $\mathcal S$ of $\oplat$; etc. We shall use a similar simplified notation for adherences corresponding to the sequential strong convergence structures that are defined in the obvious way.

The adherence of a set in a convergence structure obviously depends on the superset, since this determines the available possible limits of nets. In an ordered context, there can be additional complications because, for example, the notion of order convergence of a net itself depends on the vector lattice that the net is considered to be a subset of. It is for this reason that, although we have not included the superset in the notation for adherences, we shall always indicate it in words.

Let $\convstru_X$ be a convergence structure on a non-empty set $X$, and let $\convstru_Y$ be a convergence structure on a non-empty set $Y$. A map $\Phi: X\to Y$ is said to be \emph{$\convstru_X$-$\convstru_Y$ continuous} when, for every pair $(\net,x)$ in $\convstru_X$, the pair $(\netgen{\Phi (x_\alpha)}{\alpha\in\is},\Phi(x))$ is an element of $\convstru_Y$. We shall speak of  $\STRONG\uoLt_E$-\o\ continuity rather than of $\convstru_{\STRONG\uoLt_E}$-$\convstru_{\o}$ continuity; etc.

Let $E$ be a vector lattice. For $T\in \obops(E)$, we define $\Ri{T}, \Le{T}:\obops(E)\to\obops(E)$ by setting $\Ri{T} (S)\coloneqq ST$ and $\Le{T} (S)\coloneqq TS$ for $S\in\obops(E)$. We shall use the same notations for the maps that result in other contexts when compositions with linear operators map one set of linear operators into another.

For later use in this paper, we establish a few preparatory results that are of some interest in their own right.

\begin{lemma}\label{res:orthisorth_basic}
	Let $\opalg$ be an \falg\ with an identity element $e$, and let $E$ be a vector lattice with the principal projection property. Let $a\in\pos{\alg}$, and suppose that
	\[
	\pi:\Span\{e, a, a^2\}\to\obops(E)
	\]
	is a positive linear map such that $\pi(e)=\Id$. Then $\pi(a)\in\Orth(E)$.
\end{lemma}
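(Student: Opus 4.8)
The plan is to exploit the single algebraic fact that squares are positive in an Archimedean $f\!$-algebra, to transport this through the positive map $\pi$ into an operator inequality, and then to read off the band-preservation of $\pi(a)$ from that inequality. Throughout, recall that $\opalg$, being a vector lattice, is Archimedean, so that squares in $\opalg$ are positive and, in particular, $e=e^2\geq 0$ (which is why the parenthetical remark in the statement is harmless).

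First I would note that $(a-\lambda e)^2\geq 0$ for every $\lambda\in\RR$, and that upon expanding,
\[
(a-\lambda e)^2 = a^2-2\lambda a+\lambda^2 e\in\Span\{e,a,a^2\};
\]
this is precisely why $\pi$ is assumed to be defined on this three-dimensional subspace. Applying the positive linear map $\pi$ and using $\pi(e)=\Id$ then yields, on writing $S\coloneqq\pi(a)$ and $P\coloneqq\pi(a^2)$ (both positive operators, since $a,a^2\in\pos{\opalg}$),
\[
P-2\lambda S+\lambda^2\Id\geq 0\qquad(\lambda\in\RR).
\]

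Next I would extract disjointness preservation from this. Evaluating the inequality at a fixed $x\in\posE$ and dividing by $2\lambda$ for $\lambda>0$ gives $Sx\leq\tfrac{1}{2\lambda}Px+\tfrac{\lambda}{2}x$. Take any $y\in\posE$ with $x\wedge y=0$. Meeting both sides with $y$, using the subadditivity $y\wedge(u+v)\leq(y\wedge u)+(y\wedge v)$ for $u,v\in\posE$ together with $\bigl(\tfrac{\lambda}{2}x\bigr)\wedge y=0$, I obtain $0\leq Sx\wedge y\leq\tfrac{1}{2\lambda}Px$ for every $\lambda>0$. Since $E$ is Archimedean, letting $\lambda\to\infty$ forces $Sx\wedge y=0$. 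Thus $S$ preserves disjointness of positive elements, and since $\abs{Sx}\leq S\abs{x}$ this upgrades to $\abs{Sx}\wedge\abs{y}=0$ whenever $\abs{x}\wedge\abs{y}=0$; that is, $Sx\perp y$ whenever $x\perp y$. From $Sx\perp y$ for every $y\in\{x\}^{\dc}$ we get $Sx\in\{x\}^{\dc\dc}$, which in the Archimedean lattice $E$ is the band generated by $x$, so $Sx$ lies in every band containing $x$. Hence $S$ leaves every band invariant, i.e.\ is band preserving; being order bounded by hypothesis, $S=\pi(a)$ is an orthomorphism.

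The crux of the argument is the quadratic positivity trick, which is what makes the hypothesis that $\pi$ be defined on all of $\Span\{e,a,a^2\}$ exactly strong enough. Once the operator inequality is in hand, the only delicate point—and the step I expect to need the most care—is the lattice manipulation that converts this one-parameter bound into genuine disjointness: one must use the subadditivity of $u\mapsto u\wedge y$ over sums and the scale-invariance of disjointness rather than any (false) distributivity of $\wedge$ over $+$, and then invoke the Archimedean property of $E$ to pass to the limit in $\lambda$. The principal projection property of $E$ is convenient for describing bands, but the argument in fact only calls on its Archimedean property.
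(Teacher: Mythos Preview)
Your proof is correct and follows the same broad strategy as the paper's\textemdash both derive an inequality of the form $\pi(a)x\leq c\,x+\varepsilon\,\pi(a^2)x$ with $\varepsilon$ arbitrarily small and then eliminate the $c\,x$ term\textemdash but the two executions differ in instructive ways. The paper obtains its inequality from the $f$-algebra estimate $a\leq ne+\tfrac{1}{n}a^2$ (a known spectral-type inequality in unital $f$-algebras), whereas you obtain the equivalent bound directly from the positivity of $(a-\lambda e)^2$. More significantly, the paper kills the $x$-term by applying the band projection $I-P_x$, which is precisely where the principal projection property enters, while you kill it by taking the infimum with a disjoint $y$ and using the Riesz decomposition inequality $(u+v)\wedge y\leq u\wedge y+v\wedge y$. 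Your route therefore needs only the Archimedean property of $E$, confirming your closing remark that the principal projection hypothesis is not actually used; the paper's argument genuinely consumes it.
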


\begin{proof}
	It is obvious that $\pi(a)\in\obops(E)$, so it remains to be shown that $\pi(a)$ is band preserving on $E$. We know from \cite[Theorem~2.57]{aliprantis_burkinshaw_POSITIVE_OPERATORS_SPRINGER_REPRINT:2006} that
	\[
	a\leq a\wedge ne+\frac{1}{n}a^2\leq ne+\frac{1}{n}a^2
	\]
	for $n\geq 1$. Take $x\in \pos{E}$. Then we have
	\begin{equation}\label{eq:f-algebra_inequality}
		\pi(a)x\leq \pi\left[ne+\frac{1}{n}a^2\right]x= n x+\frac{1}{n}\pi(a^2)x.
	\end{equation}
	for $n\geq 1$.
	Let $B_x$ be the band generated by $x$ in $E$, and let $P_x\in\obops(E)$ be the order projection onto $B_x$. Using that  $\pi(a)x\geq 0$ and \cref{eq:f-algebra_inequality}, we have
	\begin{align*}
		0&\leq (I-P_x)[\pi(a)x]
		\\&\leq (I-P_x)[nx+\frac{1}{n}\pi(a^2)x]
		\\&=\frac{1}{n}(I-P_x)[\pi(a^2)x]
	\end{align*}
	for all $n\geq 1$. Hence $(I-P_x)[\pi(a)x]=0$, so that $\pi(a)x\in B_x$. Since $x$ was arbitrary, this shows that $\pi(a)$ is band preserving.
\end{proof}

\begin{proposition}\label{res:orthisorth_linear}
	Let $\opalg$ be an \falg\ with an identity element $e$, and let $E$ be a vector lattice with the principal projection property. Suppose that $\pi:\opalg\to\obops(E)$ is a positive linear map such that $\pi(e)=\Id$. Then $\pi(\opalg)\subseteq\Orth(E)$. If, in addition, $\pi$ preserves the multiplication, then $\pi$ is a unital vector lattice algebra homomorphism from $\opalg$ into $\Orth(E)$.
\end{proposition}

\begin{proof}
It is clear from \cref{res:orthisorth_basic} that $\pi$ maps $\opalg$ into $\Orth(E)$. Suppose that $\pi$ also preserves the multiplication. In this case, we note that $\Orth(E)$ is a unital Archimedean \falg\ (see \cite[Theorem~2.59]{aliprantis_burkinshaw_POSITIVE_OPERATORS_SPRINGER_REPRINT:2006}), so that it is semiprime by \cite[Corollary~10.4]{de_pagter_THESIS:1981}. Since $\alg$ is likewise semiprime, it follows from \cite[p.~96]{de_pagter_THESIS:1981} (see also \cite[part~(i) of Theorem~3.7]{de_pagter_THESIS:1981}) that $\pi$ is a vector lattice homomorphism.
\end{proof}


The following is immediate from \cref{res:orthisorth_linear} and\textemdash for its first part\textemdash the commutativity of $\Orth(E)$ (see \cite[Theorem~2.56]{aliprantis_burkinshaw_POSITIVE_OPERATORS_SPRINGER_REPRINT:2006}).

\begin{corollary}\label{res:orthisorth}
	Let $E$ and $F$ be vector lattices, where $F$ is Dedekind complete. Let $\oplat$ be a vector sublattice of $\obops(E,F)$ with the principal projection property.
	\begin{enumerate}
		\item\label{res:orthisorth1} Suppose that $ST\in\oplat$ for all $S\in\oplat$ and $T\in\Orth(E)$, so that there is a naturally defined map $\Ri{T}:\oplat\to\oplat$ for $T\in\Orth(E)$. Then $\Ri{T}\in\Orth(\oplat)$ for $T\in\Orth(E)$, and the ensuing map $\Ri{}:\Orth(E)\to\Orth(\oplat)$ is a unital vector lattice algebra homomorphism.
		\item\label{res:orthisorth2} Suppose that $TS\in\oplat$ for all $S\in\oplat$ and $T\in\Orth(F)$, so that there is a naturally defined map $\Le{T}:\oplat\to\oplat$ for $T\in\Orth(F)$. Then $\Le{T}\in\Orth(\oplat)$ for $T\in\Orth(F)$, and the ensuing map $\Le{}:\Orth(F)\to\Orth(\oplat)$ is a unital vector lattice algebra homomorphism.
	\end{enumerate}
\end{corollary}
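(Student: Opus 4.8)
The plan is to deduce both parts from \cref{res:orthisorth_linear} by making, in each case, a judicious choice of the $f\!$-algebra $\opalg$ and of the positive linear representation $\pi$. The crucial observation is that $\Orth(F)$ and $\Orth(E)$ are (commutative) $f\!$-algebras under composition whose identity elements $\Id_F$ and $\Id_E$ are positive; they are thus precisely the kind of algebra to which \cref{res:orthisorth_linear} applies. For part~\partref{res:orthisorth1} I would feed \cref{res:orthisorth_linear} the algebra $\Orth(F)$ together with the right-multiplication map, and for part~\partref{res:orthisorth2} the algebra $\Orth(E)$ together with the left-multiplication map.

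In detail, for part~\partref{res:orthisorth1} I would consider $\pi\colon\Orth(F)\to\obops(\oplat)$ defined by $\pi(T)\coloneqq\Ri{T}$. By the standing hypothesis of this part, $\Ri{T}$ is a well-defined linear map from $\oplat$ into itself, and $\pi$ is clearly linear in $T$. It is unital, since $\pi(\Id_F)(S)=S\Id_F=S$ for $S\in\oplat$, so that $\pi(\Id_F)=\Id_\oplat$. To see that $\pi$ is positive, note that if $T\geq 0$ in $\Orth(F)$ and $S\in\pos{\oplat}$, then the composition $ST$ of the two positive operators is again positive, so $\Ri{T}$ is a positive, hence order bounded, operator on $\oplat$; writing a general $T\in\Orth(F)$ as $T=T^+-T^-$ with $T^\pm\in\pos{\Orth(F)}$ then shows that $\pi(T)=\Ri{T^+}-\Ri{T^-}$ indeed takes its values in $\obops(\oplat)$. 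Since $\oplat$ has the principal projection property by assumption, \cref{res:orthisorth_linear}, applied with $\opalg=\Orth(F)$, with $e=\Id_F$, and with $\oplat$ in the role of the vector lattice, yields $\pi(\Orth(F))\subseteq\Orth(\oplat)$. In particular $\Ri{T}\in\Orth(\oplat)$ for every $T\in\Orth(F)$, which is the assertion.

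Part~\partref{res:orthisorth2} is entirely analogous: I would take the $f\!$-algebra $\Orth(E)$ with identity $\Id_E$ and the left-multiplication map $\pi(T)\coloneqq\Le{T}$, which is linear, unital, and positive because $TS\geq 0$ whenever $T\geq 0$ and $S\geq 0$. Applying \cref{res:orthisorth_linear} once more gives $\Le{T}\in\Orth(\oplat)$ for all $T\in\Orth(E)$. There is no genuine obstacle here; the content is purely organisational. The only facts that must be invoked are that $\Orth(F)$ and $\Orth(E)$ are $f\!$-algebras with a positive identity element, that the composition of positive operators is positive (so that the multiplication maps are positive and, after splitting into positive and negative parts, land in $\obops(\oplat)$), and that the hypotheses of the two parts keep the relevant products inside $\oplat$; everything substantive is already contained in \cref{res:orthisorth_linear}, which is exactly why the statement is immediate from it.
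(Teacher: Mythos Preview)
Your proposal is correct and is exactly the argument the paper has in mind: the text states that the corollary is ``immediate from \cref{res:orthisorth_linear}'', and your write-up simply spells out this immediacy by taking $\opalg=\Orth(F)$ (resp.\ $\Orth(E)$), $e=\Id$, and $\pi=\Ri{\,\cdot\,}$ (resp.\ $\Le{\,\cdot\,}$), checking the unitality and positivity hypotheses, and invoking \cref{res:orthisorth_linear}. There is nothing to add.
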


\begin{remark}\quad
	\begin{enumerate}
		\item For $\oplat=\obops(E,F)$, \cref{res:orthisorth} is established in the beginning of \cite[Section~2]{luxemburg_de_pagter:2002}.
		\item For $\oplat=\ocops(E)$, where $E$ is a Dedekind complete vector lattice, the facts that left and right multiplications by orthomorphisms on $E$ yield orthomorphisms on $\ocops(E)$, were established in \cite[Proof of Theorem~8.4]{de_rijk_THESIS:2012}.
		\item For $\oplat=\Orth(E)$, \cite[Theorems~2.59 and ~2.62]{aliprantis_burkinshaw_POSITIVE_OPERATORS_SPRINGER_REPRINT:2006} show that the (then coinciding) maps $\Ri{}$ and $\Le{}$ even provide a vector lattice algebra isomorphisms between the $\Orth(E)$ and $\Orth(\Orth(E)$. This is also true when $E$ is not Dedekind complete.
	\end{enumerate}
\end{remark}

\section{Implications between convergences on vector lattices of operators}\label{sec:implications_between_convergences}

\noindent In this section, we investigate the implications between the six convergences on the order bounded linear operators and on the orthomorphisms on a Dedekind complete vector lattice. Without further ado, let us simply state the answers and explain how they are obtained.

For a general net of order bounded linear operators \uppars{resp.\ orthomorphisms} on a general Dedekind complete vector lattice, the implications between order convergence, unbounded order convergence, convergence in a possible Hausdorff \uoLtop, strong order convergence, strong unbounded order convergence, and strong convergence with respect to a possible Hausdorff \uoLtop, are given in Table~\ref{table:convergences_general}
 \uppars{resp.\ Table~\ref{table:convergences_orthomorphisms}}.
	\begin{center}
		\begin{threeparttable}[H]
			\small
			\caption{\small Implications between convergences of nets in $\obops(E)$. }
			\begin{tabular}{ |  C{4em} |  C{4em} |  C{4em} | C{4em} |  C{4em} |  C{4em}|  C{4em} | }
				\hline
				& \o & $\uo$ & $\uoLt_{\obops(E)}$ & $\SO$ & $\SUO$ & $\STRONG\uoLt_E$ \\
				\hline
				$\o$ & $1$ & $1$ & $1$ &$1$ &$1$ &$1$ \\
				\hline
				$\uo$  & $0$ & $1$ & $1$ & $0$ & $0$ & $0$ \\
				\hline
				$\uoLt_{\obops(E)}$  & $0$ & $0$ & $1$ & $0$ & $0$ & $0$ \\
				\hline
				$\SO$  & $0$ & $0$ & $0$ & $1$ & $1$ & $1$\\
				\hline
				$\SUO$ & $0$ & $0$ & $0$ & $0$ & $1$ & $1$ \\
				\hline
				$\STRONG\uoLt_E$ & $0$ & $0$ & $0$ & $0$ & $0$ & $1$\\
				\hline
			\end{tabular}\label{table:convergences_general}
		\end{threeparttable}
	\end{center}
	\bigskip
	\begin{center}
		\begin{threeparttable}[H]
			\small
			\caption{\small Implications between convergences of nets in $\Orth(E)$.}
			\begin{tabular}{ |  C{4em} |  C{4em} |  C{4em} | C{4em} |  C{4em} |  C{4em}|  C{4em} | }
				\hline
				& $\o$ & $\uo$ & $\uoLt_{\Orth(E)}$ & $\SO$ & $\SUO$ & $\STRONG\uoLt_E$ \\
				\hline
				$\o$ & $1$ & $1$ & $1$ &$1$ &$1$ &$1$ \\
				\hline
				$\uo$ & $0$ & $1$ & $1$ & $0$ & $1$ & $1$\\
				\hline
				$\uoLt_{\Orth(E)}$  & $0$ & $0$ & $1$ & $0$ & $0$ & $1$ \\
				\hline
				$\SO$ & $0$ & $1$  & $1$  & $1$ & $1$ & $1$ \\
				\hline
				$\SUO$  & $0$ & $1$ & $1$ & $0$ & $1$ & $1$ \\
				\hline
				$\STRONG\uoLt_E$  & $0$ & $0$ & $1$ & $0$ & $0$ & $1$\\
				\hline
			\end{tabular}\label{table:convergences_orthomorphisms}
\begin{tablenotes}
\item [{}]\center{In $\Orth(E)$, \uo\ and \SUO\ convergence of nets coincide, as do a possible $\uoLt_{\Orth}(E)$ and $\STRONG\uoLt_E$ convergence.}
\end{tablenotes}
\end{threeparttable}
	\end{center}

	\bigskip
	
	In these tables, the value in a cell indicates whether the convergence of a net in the sense that labels the row of that cell does \uppars{value 1} or does not \uppars{value 0} in general imply its convergence (to the same limit) in the sense that labels the column of that cell. For example, the value 0 in the cell $(\uo,\STRONG\uoLt_E)$ in Table~\ref{table:convergences_general} indicates that there exists a net of order bounded linear operators on a Dedekind complete vector lattice $E$ that admits a Hausdorff \uoLtop\ $\uoLt_E$, such that this net is unbounded order convergent to zero in $\obops(E)$, but not strongly convergent to zero with respect to $\uoLt_E$.  The value 1 in the cell $(\uo,\STRONG\uoLt_E)$ in Table~\ref{table:convergences_orthomorphisms}, however, indicates that every net of \emph{orthomorphisms} on an arbitrary Dedekind complete vector lattice $E$ that admits a Hausdorff \uoLtop\ $\uoLt_E$, such that this net is unbounded order convergent to zero, \emph{is} strongly convergent to zero with respect to $\uoLt_E$.

	\medskip
	
	We shall now explain how these tables can be obtained.

	Obviously, the order convergence of a net of operators implies its unbounded order convergence, which implies its convergence in a possible Hausdorff \uoLtop. There are similar implications for the three associated strong convergences. Furthermore, an implication that fails for orthomorphisms also fails in the general case. Using these basic facts, it is a logical exercise to complete the tables from a few `starting values' that we now validate.

	For Table~\ref{table:convergences_general}, we have the following `starting values':
	\begin{itemize}
	\item the value 1 in the cell $(\o,\SO)$ follows from \cite[Lemma~4.1]{deng_de_jeu:2021};
	\item the value 0 in the cell $(\uo,S\uoLt_E)$ follows from \cite[Example~5.3]{deng_de_jeu:2021}, when using that, for an atomic vector lattice as in that example, the unbounded order convergence of a net and the convergence in the Hausdorff \uoLtop\ coincide (this follows from the combination of  \cite[Lemma~3.1]{dabboorasad_emelyanov_marabeh:2020} and \cite[Lemma~7.4]{taylor:2019});
	\item the value 0 in the cell $(\SO,\uoLt_{\obops(E)})$ follows from the case where $p=\infty$ in \cite[Example~5.5]{deng_de_jeu:2021}. The reason is\textemdash we resort to the notation and context of that example\textemdash that, for $p=\infty$, it follows from  \cite[Example~10.1.2]{bogachev_MEASURE_THEORY_VOLUME_II:2007} that the sequence $\mathbb E_n f$ is order bounded in $\Ell_\infty([0,1])$ for all $f\in\Ell_\infty([0,1])$. Since we already know from the general case that it is almost everywhere convergent to $f$ it is, in fact, order convergent to~$f$ in $\Ell_\infty([0,1])$. The remainder of the arguments in the example then validate the value 0 in the cell.
	\end{itemize}	

	For Table~\ref{table:convergences_orthomorphisms}, we have the following `starting values':	
	\begin{itemize}
	\item the values 0 in the cells $(\uo,\o)$, $(\uo,\SO)$, and $(\uoLt_{\Orth(E)},\uo)$, as well as in $(\uoLt_{\Orth(E)},\SUO)$, follow from the examples preceding \cite[Lemma~9.1]{deng_de_jeu:2021}, letting the multiplication operators act on the constant function 1 for the second and fourth of these cells;
	\item the value 0 in the cell $(\SO,\o)$ follows from the example following the proof of \cite[Theorem~9.4]{deng_de_jeu:2021};
	\item the values 1 in the cells $(\uo,\SUO)$ and $(\SUO,\uo)$ follow from \cite[Theorem~9.7]{deng_de_jeu:2021};
	\item the values 1 in the cells $(\uoLt_{\Orth(E)},\STRONG\uoLt_E)$ and $(\STRONG\uoLt_E, \uoLt_{\Orth(E)})$ follow from \cite[Theorem~9.10]{deng_de_jeu:2021}.
	\end{itemize}

It is easily checked that the above information suffices to complete both tables.

\setcounter{theorem}{\value{table}}

\begin{remark}\quad
\begin{enumerate}
	\item\label{part:tables_1} Every \emph{order bounded} net of orthomorphisms on an arbitrary Dedekind complete vector lattice $E$ that is strongly order convergent to zero, is order convergent to zero in $\Orth(E)$; see \cite[Theorem~9.4]{deng_de_jeu:2021};
	\item\label{part:tables_2} Every \emph{sequence} of orthomorphisms on a Dedekind complete Banach lattice $E$ that is strongly order convergent to zero, is order convergent to zero in $\Orth(E)$; see \cite[Theorem~9.5]{deng_de_jeu:2021};
	\item\label{part:tables_3} The validity of all zeroes in Table~\ref{table:convergences_general} \uppars{resp.\ Table~\ref{table:convergences_orthomorphisms}} follows from the existence of a net of order bounded linear operators \uppars{resp.\ orthomorphisms} on a Dedekind complete \emph{Banach lattice} for which the implication in question does not hold. With the cell $(\SO,\o)$ in Table~\ref{table:convergences_orthomorphisms} as the only exception, such a net of operators on a Banach lattice can even be taken to be a sequence. This follows from an inspection of the (counter) examples referred to above when validating the `starting' zeroes in the tables.
\end{enumerate}
\end{remark}

\section{Continuity of left and right multiplications}\label{sec:continuity_in_one_variable}

\noindent
In this section, we study continuity properties of left and right multiplication operators. For example, take an arbitrary $T\in\obops(E)$, where $E$ is an arbitrary Dedekind complete vector lattice that admits a Hausdorff \uoLtop\ $\uoLt_{\obops(E)}$. Is it then true that $\lambda_T:\obops(E)\to\obops(E)$ maps unbounded order convergent nets in $\obops(E)$ to  $\uoLt_{\obops(E)}$ convergent nets (with corresponding limits)? If not, is this then true when we suppose that $T\in\ocops(E)$? If not, is this true when we suppose that $T\in\Orth(E)$? One can ask a similar combination of questions, specifying to classes of increasingly well-behaved operators, for each of the $6 \cdot 6 = 36$ combinations of convergences of nets in $\obops(E)$ under consideration in this paper. There are also 36 combinations to be considered for right multiplication operators. This section provides the answers in all 72 cases; the results are contained in the Tables~\ref{table:right_multiplication} and~\ref{table:left_multiplication}. For the example that we gave, the answer is negative and remains so for arbitrary $T\in\ocops(E)$, but it becomes positive for arbitrary $T\in\Orth(E)$.

For $\Orth(E)$, there are similar questions to be asked for its left and right regular representation, but their number is smaller. Firstly, we see no obvious better-behaved subclass of $\Orth(E)$ that we should also consider. Secondly, since $\Orth(E)$ is commutative, there is only one type of multiplication involved. Thirdly, as in Table~\ref{table:convergences_orthomorphisms}, there are two pairs of coinciding convergences. All in all, there are only 4 x 4 = 16 possible combinations that actually have to be considered for the regular representation of $\Orth(E)$. Also in this case, all answers can be given; the results are contained in Table~\ref{table:multiplication_on_orthomorphisms}. As it turns out, Table~\ref{table:multiplication_on_orthomorphisms} is identical to Table~\ref{table:convergences_orthomorphisms}. There appears to be no a priori reason for this fact; it is simply the outcome.

We shall now set out to validate the Tables~\ref{table:right_multiplication},~\ref{table:left_multiplication}, and~\ref{table:multiplication_on_orthomorphisms}. Fortunately, we do not need individual results for every cell. Upon considering the multiplications by the orthomorphism that is the identity operator, the zeroes in the Tables~\ref{table:convergences_general} and~\ref{table:convergences_orthomorphisms} already determine the values in many cells. For the remaining ones, the combination of the `standard' implications that were already used for the Tables~\ref{table:convergences_general} and~\ref{table:convergences_orthomorphisms} and a limited number of results and (counter) examples already suffices. We shall now start to collect these.

We start with \o-\o\ and \SO-\SO\ continuity.

\begin{proposition}\label{res:RL_order}
Let $E$ be a Dedekind complete vector lattice.  Then:
	\begin{enumerate}
		\item\label{res:RL_order1} $\Ri{T}:\obops(E)\to\obops(E)$ is \o-\o\ continuous for all $T\in \obops(E)$;
		\item\label{res:RL_order2} $\Le{T}:\obops(E)\to\obops(E)$ is \o-\o\ continuous for all $T\in \ocops(E)$;
		\item\label{res:RL_order3} $\Ri{T}:\obops(E)\to\obops(E)$ is \SO-\SO\ continuous for all $T\in \obops(E)$;
		\item\label{res:RL_order4} $\Le{T}: \obops(E)\to\obops(E)$ is \SO-\SO\ continuous for all $T\in \ocops(E)$.
	\end{enumerate}
\end{proposition}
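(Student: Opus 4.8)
The plan is to dispose of the two strong statements \partref{res:RL_order3} and \partref{res:RL_order4} directly from the definition of strong order convergence, and then to reduce the two order statements \partref{res:RL_order1} and \partref{res:RL_order2} to a single auxiliary fact about the order continuity of multiplication by a positive operator on $\obops(E)$. For \partref{res:RL_order3}, suppose $S_\alpha\SOconv S$, so that $S_\alpha y\oconv Sy$ for every $y\in E$. Fixing $T\in\obops(E)$ and $x\in E$ and applying this with $y\coloneqq Tx$ gives $(S_\alpha T)x=S_\alpha(Tx)\oconv S(Tx)=(ST)x$; as $x$ was arbitrary, $S_\alpha T\SOconv ST$. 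The argument for \partref{res:RL_order4} is identical, except that, for $x\in E$, we now feed the order convergent net $S_\alpha x\oconv Sx$ into the order continuous operator $T\in\ocops(E)$ to obtain $(TS_\alpha)x=T(S_\alpha x)\oconv T(Sx)=(TS)x$, whence $TS_\alpha\SOconv TS$. Thus order continuity of $T$ is precisely what makes left multiplication \SO-\SO\ continuous, while right multiplication needs no hypothesis on $T$.

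For the order statements I would first isolate the following fact: \emph{if $0\le U\in\obops(E)$ and $y_\beta\downarrow 0$ in $\obops(E)$, then $y_\beta U\downarrow 0$; and if, in addition, $U\in\ocops(E)$, then also $Uy_\beta\downarrow 0$.} Since $E$ is Dedekind complete, the infimum of a downward directed net of operators in $\obops(E)$ is computed pointwise on $\posE$, so $y_\beta x\downarrow 0$ in $E$ for every $x\in\posE$. For right multiplication, evaluating at $x\in\posE$ gives $(y_\beta U)x=y_\beta(Ux)\downarrow 0$, because $Ux\in\posE$; as this holds for all such $x$ and $y_\beta U$ is a decreasing net, its infimum in $\obops(E)$ is $0$, i.e.\ $y_\beta U\downarrow 0$. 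For left multiplication, $(Uy_\beta)x=U(y_\beta x)$, and here I use that $U$ is positive and order continuous, so $y_\beta x\downarrow 0$ forces $U(y_\beta x)\downarrow 0$; again the pointwise infimum vanishes and $Uy_\beta\downarrow 0$.

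With this fact, \partref{res:RL_order1} and \partref{res:RL_order2} follow by domination. If $S_\alpha\oconv S$ in $\obops(E)$, pick a witnessing net $y_\beta\downarrow 0$ in $\obops(E)$ so that, for each $\beta_0$, there is $\alpha_0$ with $\abs{S-S_\alpha}\le y_{\beta_0}$ for $\alpha\ge\alpha_0$. The standard modulus inequality $\abs{RS}\le\abs{R}\,\abs{S}$ on $\obops(E)$ then yields, for $\alpha\ge\alpha_0$,
\[
\abs{ST-S_\alpha T}=\abs{(S-S_\alpha)T}\le\abs{S-S_\alpha}\,\abs{T}\le y_{\beta_0}\,\abs{T}
\]
and
\[
\abs{TS-TS_\alpha}=\abs{T(S-S_\alpha)}\le\abs{T}\,\abs{S-S_\alpha}\le\abs{T}\,y_{\beta_0}.
\]
Taking $U\coloneqq\abs{T}\ge 0$, the fact above gives $y_\beta\abs{T}\downarrow 0$, which witnesses $S_\alpha T\oconv ST$ and proves \partref{res:RL_order1}; when $T\in\ocops(E)$ we have $\abs{T}\in\ocops(E)$ (since $\ocops(E)$ is a band, hence a vector sublattice, of $\obops(E)$), so the fact gives $\abs{T}\,y_\beta\downarrow 0$, which witnesses $TS_\alpha\oconv TS$ and proves \partref{res:RL_order2}.

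The crux, and the only real obstacle, is the auxiliary fact, and in particular the asymmetry between its two halves: right multiplication carries the varying operator $y_\beta$ onto the \emph{fixed} vectors $Ux$, so the pointwise computation of operator infima already does the job; left multiplication instead applies the \emph{fixed} operator $U$ to the \emph{varying} vectors $y_\beta x$, so preserving the infimum genuinely requires $U$ to be order continuous. This is exactly why \partref{res:RL_order2} and \partref{res:RL_order4} are restricted to $T\in\ocops(E)$ and cannot be expected for arbitrary $T\in\obops(E)$.
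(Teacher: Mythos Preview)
Your proof is correct and follows essentially the same route as the paper's. Both arguments reduce parts~\partref{res:RL_order1} and~\partref{res:RL_order2} to the same auxiliary fact about $y_\beta U\downarrow 0$ versus $Uy_\beta\downarrow 0$ (the paper invokes \cite[Theorem~1.18]{aliprantis_burkinshaw_POSITIVE_OPERATORS_SPRINGER_REPRINT:2006} for the pointwise infimum computation), and both treat parts~\partref{res:RL_order3} and~\partref{res:RL_order4} as immediate from the definitions. The only cosmetic difference is that the paper first reduces to $S_\alpha\oconv 0$, passes to an order bounded tail, and builds a same-indexed dominating net $R_\alpha\coloneqq\bigvee_{\beta\ge\alpha}\abs{S_\beta}$, whereas you work directly with the two-indexed witnessing net $y_\beta$ from the definition of order convergence; the underlying idea and the identification of the left/right asymmetry are identical.
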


\begin{proof} We prove the parts~\ref{res:RL_order1} and~\ref{res:RL_order2}.
	Take $T\in \obops(E)$, and let $\opnet\subseteq \obops(E)$ be a net such that $S_\alpha\oconv 0$ in $\obops(E)$. By passing to a tail, we may assume that $(\abs{S_\alpha})_{\alpha\in\is}$ is order bounded in $\obops(E)$. Set $R_\alpha\coloneqq \bigvee_{\beta\geq \alpha} \abs{S_\beta}$ for $\alpha\in\is$. Then $\abs{S_\alpha}\leq R_\alpha$ for $\alpha\in\is$ and $R_\alpha\downarrow 0$ in $\obops(E)$ (see \cite[Remark~2.2]{gao_troitsky_xanthos:2017}). It is immediate from \cite[Theorem 1.18]{aliprantis_burkinshaw_POSITIVE_OPERATORS_SPRINGER_REPRINT:2006} that also $R_\alpha\abs{T}\downarrow 0$ in $\obops(E)$. Since $\abs{\Ri{T}(S_\alpha)}\leq R_\alpha\abs{T}$ for $\alpha\in\is$, we see that $\Ri{T}(S_\alpha)\oconv 0$ in $\obops(E)$, as desired. Suppose that, in fact, $T\in\ocops(E)$. Since $R_\alpha x\downarrow 0$ for $x\in\pos{E}$ by \cite[Theorem 1.18]{aliprantis_burkinshaw_POSITIVE_OPERATORS_SPRINGER_REPRINT:2006}, we then also have that $\abs{T}R_\alpha x\downarrow 0$ for $x\in \pos{E}$. Hence $\abs{T}R_\alpha\downarrow 0$ in $\obops(E)$. The fact that $\abs{\Le{T}(S_\alpha)}\leq \abs{T} R_\alpha$ for $\alpha\in\is$ then implies that $\Le{T}(S_\alpha)\oconv 0$ in $\obops(E)$.
	
	The parts~\ref{res:RL_order3} and~\ref{res:RL_order4} are immediate consequences of the definitions.
\end{proof}

We now show that the condition in the parts~\ref{res:RL_order2} and~\ref{res:RL_order4} of \cref{res:RL_order}  that $T\in\ocops(E)$ cannot be relaxed to $T\in\obops(E)$.

\begin{examples}\label{exam:RL_order}
	Take $E=\ell_\infty$, let $\seq{e_n}{n}$ be the sequence of standard unit vectors in $E$, and let $c$ denote the sublattice of $E$ consisting of the convergent sequences. We define a positive linear functional $f_c$ on $c$ by setting
	\[
	f_c(x)\coloneqq\lim_{n\to\infty} x_n
	\]
	for $x=\bigvee_{i=1}^\infty x_ie_i\in c$. Since $c$ is a majorising vector subspace of $E$, \cite[Theorem~1.32]{aliprantis_burkinshaw_POSITIVE_OPERATORS_SPRINGER_REPRINT:2006} shows that there exists a positive functional $f$ on $E$ that extends $f_c$. We define $T:E\to E$ by setting $Tx=f(x)e_1$ for $x\in E$. Clearly, $T\in \obops(E)$; a consideration of $T(\bigvee_{i=n}^\infty e_i)$ for $n\geq 1$ shows that $T\notin\ocops(E)$.
			
	We define $S_{n}\in \ocops(E)$ for $n\geq 1$ by setting
	\[
	S_{n}x\coloneqq x_1\bigvee_{i=1}^{n} e_i,
	\]
	and $S\in \ocops(E)$ by setting
	\[
	Sx\coloneqq x_1 \bigvee_{i=1}^\infty e_i
	\]
	for $x=\bigvee_{i=1}^\infty x_ie_i\in E$. Clearly, $S_n\uparrow S$ in $\obops(E)$. On the other hand, $\Le{T}(S_n)=0$ for all $n\geq 1$, while $\Le{T}(S)=P_1$, where $P_1\in\obops(E)$ is the order projection onto the span of $e_1$. This shows that $\Le{T}:\obops(E)\to\obops(E)$ is not \o-\o\ continuous.
	
	The sequence $\seq{S_n}{n}$, being order convergent to $S$, is also strongly un\-bound\-ed order convergent to $S$ in $\obops(E)$. Hence $\Le{T}:\obops(E)\to\obops(E)$ is not \SO-\SO\ continuous.
\end{examples}

\begin{remark}\label{rem:RL_order}
	\cref{exam:RL_order} also shows that, already for a Banach lattice $E$, $\Le{T}$ need not even be sequentially \o-$\uoLt_{\obops(E)}$ continuous, sequentially \o-\STRONG$\uoLt_E$ continuous, sequentially \SO-$\uoLt_{\obops(E)}$ continuous, or sequentially \SO-\STRONG$\uoLt_E$ continuous for arbitrary $T\in\obops(E)$.
\end{remark}

\begin{remark}
	The \o-\o\ continuity (appropriately defined) of left and right multiplications on ordered algebras is studied in \cite{alekhno:2017}. It is established on \cite[p.~542--543]{alekhno:2017} that, for a Dedekind complete vector lattice $E$, the right and left multiplication by an element $T$ of the ordered algebra $L(E)$ of all (!) linear operators on $E$ are both order continuous on $L(E)$ in the sense of \cite{alekhno:2017} if and only if the left multiplication is, which is the case if and only if $T\in \ocops(E)$. The proof refers to  \cite[Example~2.9 (a)]{alekhno:2012}, which is concerned with multiplications by a positive operator $T$ on the ordered Banach algebra $L(E)$ of all (!) bounded linear operators on a Dedekind complete Banach lattice $E$. It is established in that example that the simultaneous order continuity of the right and left multiplication by $T$ on $L(E)$ in the sense of \cite{alekhno:2012} is equivalent to $T$ being order continuous. On \cite[p.~151]{alekhno:2012} it is mentioned that this criterion for the order continuity of an operator can also be presented for an arbitrary Dedekind complete vector lattice. Although it is not stated as such, and although a proof as such is not given, the author may have meant to state, and have known to be true, that, for a Dedekind complete vector lattice $E$ and $T\in\obops(E)$, $\Le{T}$ and $\Ri{T}$ are both \o-\o\ continuous on $\obops(E)$ in the sense of the present paper if and only if $\Le{T}$ is, which is the case if and only if $T\in\ocops(E)$. Using arguments as on \cite[p.~151]{alekhno:2012} and \cite[p.~542--543]{alekhno:2017}, the authors of the present paper have verified that\textemdash this is the hard part\textemdash for $T\in\obops(E)$, the \o-\o\ continuity of $\Le{T}$ on $\obops(E)$ in the sense of the present paper does imply that $T\in\ocops(E)$. Hence the three properties of $T\in\obops(E)$ mentioned above are, indeed, equivalent; a result that is to be attributed to the late Egor Alekhno.
\end{remark}

We use the opportunity to establish the following side result, which follows easily from combining each of \cite[Satz~3.1]{synnatzschke:1980} and \cite[Proposition~2.2]{chen_schep:2016} with the parts~\ref{res:RL_order1} and~\ref{res:RL_order2} of \cref{res:RL_order}.
		
\begin{proposition}\label{res:regular_representations}
Let $E$ be a Dedekind complete vector lattice. Then:
\begin{enumerate}
	\item\label{part:regular_representations_1} the map $T\mapsto\Ri{T}$ defines an order continuous lattice homomorphism $\Ri{}:\obops(E)\to\ocops(\ocops(E),\obops(E))$.
	\item\label{part:regular_representations_2} the map $T\mapsto\Le{T}$ defines an order continuous lattice homomorphism $\Le{}:\obops(E)\to\obops(\obops(E))$ that maps $\ocops(E)$ into $\ocops(\obops(E))$.
\end{enumerate}		
\end{proposition}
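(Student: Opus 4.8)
The plan is to verify, for each of the two maps $\Ri{}$ and $\Le{}$, three things in turn: that it takes values in the asserted operator space, that it is a lattice homomorphism, and that it is order continuous. Throughout I would use that, $E$ being Dedekind complete, all the operator lattices in sight ($\obops(E)$, $\obops(\obops(E))$, and $\obops(\ocops(E),\obops(E))$) are Dedekind complete; that $\ocops(E)$ is a band, hence a regular sublattice and an ideal, in $\obops(E)$, and likewise that $\ocops(\,\cdot\,,\,\cdot\,)$ is a band in the corresponding $\obops(\,\cdot\,,\,\cdot\,)$; that in these lattices the infimum of a downward directed net of positive operators is computed pointwise; and that a positive operator is order continuous exactly when it sends nets decreasing to $0$ to nets decreasing to $0$.

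First I would pin down the codomains. In part~\partref{part:regular_representations_1}, fix $T\in\obops(E)$: for $S\in\ocops(E)$ we have $\Ri{T}(S)=ST\in\obops(E)$, and if $S_\alpha\oconv 0$ in $\ocops(E)$ then, since $\ocops(E)$ is regular in $\obops(E)$, also $S_\alpha\oconv 0$ in $\obops(E)$, so $\Ri{T}(S_\alpha)\oconv 0$ by part~\partref{res:RL_order1} of \cref{res:RL_order}; hence $\Ri{T}\in\ocops(\ocops(E),\obops(E))$. In part~\partref{part:regular_representations_2}, that $\Le{T}\in\obops(\obops(E))$ for $T\in\obops(E)$ is clear, while $\Le{T}\in\ocops(\obops(E))$ for $T\in\ocops(E)$ is precisely part~\partref{res:RL_order2} of \cref{res:RL_order}.

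The lattice-homomorphism property is the one substantial input, and I would import it: \cite[Satz~3.1]{synnatzschke:1980} yields $\abs{\Ri{T}}=\Ri{\abs{T}}$ and \cite[Proposition~2.2]{chen_schep:2016} yields $\abs{\Le{T}}=\Le{\abs{T}}$ in $\obops(\obops(E))$, which together with the evident linearity makes $\Le{}$ a lattice homomorphism and makes $\Ri{}$ a lattice homomorphism into $\obops(\obops(E))$. For part~\partref{part:regular_representations_1}, whose codomain is $\ocops(\ocops(E),\obops(E))$, one extra observation is needed: because $\ocops(E)$ is an ideal in $\obops(E)$, restricting the domain to $\ocops(E)$ commutes with taking moduli, since for $S\in\pos{\ocops(E)}$ the supremum $\sup\{\abs{\Phi(S')}:\abs{S'}\le S\}$ defining $\abs{\Phi}(S)$ already runs over operators $S'\in\ocops(E)$. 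Hence $\abs{\Ri{T}}=\Ri{\abs{T}}$ survives as an identity of operators from $\ocops(E)$ to $\obops(E)$, and as the lattice operations of the band $\ocops(\ocops(E),\obops(E))$ agree with those of $\obops(\ocops(E),\obops(E))$, $\Ri{}$ is a lattice homomorphism into the asserted space.

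It remains to show that $\Ri{}$ and $\Le{}$ are themselves order continuous; being positive, it suffices to treat a net $T_\alpha\downarrow 0$ in $\obops(E)$. For $\Ri{}$ the net $\Ri{T_\alpha}$ decreases and its infimum is pointwise, so I must check that $\inf_\alpha ST_\alpha=0$ for each $S\in\pos{\ocops(E)}$; but $ST_\alpha=\Le{S}(T_\alpha)\oconv 0$ by part~\partref{res:RL_order2} of \cref{res:RL_order}, applicable exactly because $S\in\ocops(E)$, and a decreasing net that converges to $0$ in order has infimum $0$. This is precisely the point where one sees why the operators in the codomain of $\Ri{}$ must be defined on $\ocops(E)$ and not on all of $\obops(E)$. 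For $\Le{}$ the computation is the mirror image: for $S\in\pos{\obops(E)}$ one has $T_\alpha S=\Ri{S}(T_\alpha)\oconv 0$ by part~\partref{res:RL_order1} of \cref{res:RL_order}, which holds for \emph{every} $S\in\obops(E)$, so $\Le{T_\alpha}\downarrow 0$ in $\obops(\obops(E))$. The only genuinely non-elementary step is the lattice-homomorphism identity, which I take from \cite{synnatzschke:1980,chen_schep:2016}; everything else is organisational, the main care being to track in which operator lattice each modulus and infimum is formed and to check compatibility with restriction to the band $\ocops(E)$.
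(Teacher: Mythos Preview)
Your overall strategy matches the paper's\textemdash combine the cited lattice-homomorphism results with parts~\partref{res:RL_order1} and~\partref{res:RL_order2} of \cref{res:RL_order}\textemdash but you have the two key citations interchanged, and this is not merely cosmetic. As one sees from the Remark following the proposition (and from the way they are invoked in the proof of \cref{res:commutant_of_order_continuous_operators_is_lattice}), \cite[Satz~3.1]{synnatzschke:1980} is the result about \emph{left} multiplication and gives $\abs{\Le{T}}=\Le{\abs{T}}$ in $\obops(\obops(E))$, whereas \cite[Proposition~2.2]{chen_schep:2016} is the result about \emph{right} multiplication and yields $\abs{\Ri{T}}=\Ri{\abs{T}}$ only when these are regarded as operators on $\ocops(E)$. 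Your assertion that $\abs{\Ri{T}}=\Ri{\abs{T}}$ already in $\obops(\obops(E))$ is not supported by either reference, and indeed the very form of part~\partref{part:regular_representations_1}\textemdash with domain $\ocops(E)$ rather than $\obops(E)$\textemdash reflects that this identity is not available on all of $\obops(E)$. Once the attributions are corrected, your ``extra observation'' about restricting to the ideal $\ocops(E)$ becomes superfluous: Chen--Schep delivers the lattice-homomorphism identity directly in $\obops(\ocops(E),\obops(E))$.

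The remaining parts of your argument\textemdash establishing the codomains via \cref{res:RL_order}, and verifying the order continuity of $T\mapsto\Ri{T}$ and $T\mapsto\Le{T}$ by evaluating the decreasing nets $\Ri{T_\alpha}$ and $\Le{T_\alpha}$ pointwise and invoking parts~\partref{res:RL_order2} and~\partref{res:RL_order1} respectively\textemdash are correct and usefully spell out what the paper's one-line proof leaves implicit.
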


\begin{remark}
	In \cite[Problem~1]{wickstead:2017c}, it was asked, among others, whether, for a Dedekind complete vector lattice $E$, the left regular representation of $\obops(E)$ is a lattice homomorphism from $\obops(E)$ into $\obops(\obops(E))$. In \cite[Theorem~11.19]{dales_de_jeu:2019}, it was observed that the affirmative answer is, in fact, provided by \cite[Satz~3.1]{synnatzschke:1980}. Part~\ref{part:regular_representations_2} of \cref{res:regular_representations} gives still more precise information.
	
	 Part~\ref{part:regular_representations_1}, which relies on \cite[Proposition~2.2]{chen_schep:2016}, 		
	implies that the right regular representation of $\ocops(E)$ is an order continuous lattice homomorphism from $\ocops(E)$ into $\obops(\ocops(E))$, with an image that is, in fact, contained in $\ocops(\ocops(E))$.
\end{remark}

After this brief digression, we continue with the main line of this section, and consider \uo-\uo\ and \SUO-\SUO\ continuity of left and right multiplication operators.

\begin{proposition}\label{res:RL_unbounded_order}
Let $E$ be a Dedekind complete vector lattice.  Then:
\begin{enumerate}
    \item\label{res:RL_unbounded_order1} $\Ri{T}$ is \uo-\uo\ continuous on  $\obops(E)$ for all $T\in \Orth(E)$;
    \item\label{res:RL_unbounded_order2} $\Le{T}$ is \uo-\uo\ continuous on $\obops(E)$ for all $T\in \Orth(E)$;
     \item\label{res:RL_unbounded_order3} $\Ri{T}$ is \SUO-\SUO\ continuous on  $\obops(E)$ for all $T\in \obops(E)$;
     \item\label{res:RL_unbounded_order4} $\Le{T}$ is \SUO-\SUO\ continuous on  $\obops(E)$ for all $T\in \Orth(E)$.
\end{enumerate}
\end{proposition}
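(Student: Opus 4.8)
The plan is to treat the two strong parts \partref{res:RL_unbounded_order3} and \partref{res:RL_unbounded_order4}, which concern pointwise (\SUO) convergence, directly from the definitions, and to deduce the two parts \partref{res:RL_unbounded_order1} and \partref{res:RL_unbounded_order2}, which concern \uo-convergence \emph{in the operator lattice} $\obops(E)$, from the fact that the relevant multiplication operators are themselves orthomorphisms on $\obops(E)$.

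For part \partref{res:RL_unbounded_order3}, I would take a net $\opnet$ in $\obops(E)$ with $S_\alpha\SUOconv 0$, so that $S_\alpha x\uoconv 0$ in $E$ for every $x\in E$. Since $\Ri{T}(S_\alpha)x=S_\alpha(Tx)$ and $Tx\in E$, applying the hypothesis to the vector $Tx$ gives $\Ri{T}(S_\alpha)x\uoconv 0$ for every $x\in E$, that is, $\Ri{T}(S_\alpha)\SUOconv 0$. This uses nothing about $T$ beyond the fact that it maps $E$ into $E$, which explains why the weaker hypothesis $T\in\obops(E)$ already suffices. For part \partref{res:RL_unbounded_order4} the computation is instead $\Le{T}(S_\alpha)x=T(S_\alpha x)$; here $S_\alpha x\uoconv 0$ in $E$, and I would invoke that an orthomorphism on $E$ is \uo-\uo\ continuous on $E$ (established in \cite{deng_de_jeu_UNPUBLISHED:2020b}) to conclude $T(S_\alpha x)\uoconv 0$, hence $\Le{T}(S_\alpha)\SUOconv 0$. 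This is precisely the point where the stronger hypothesis $T\in\Orth(E)$ becomes necessary.

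For parts \partref{res:RL_unbounded_order1} and \partref{res:RL_unbounded_order2}, the \uo-convergence in question is that of the Dedekind complete vector lattice $\obops(E)$ itself and is no longer a pointwise statement, so a direct estimate is inconvenient. Instead I would appeal to \cref{res:orthisorth}: since $E$ is Dedekind complete, $\obops(E)$ is a Dedekind complete vector lattice and therefore has the principal projection property, and $\obops(E)$ is stable under composition with orthomorphisms on either side (composition of order bounded operators is order bounded). Applying \cref{res:orthisorth} with $E=F$ and $\oplat=\obops(E)$ then yields $\Ri{T},\Le{T}\in\Orth(\obops(E))$ for every $T\in\Orth(E)$. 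Finally, I would once more use that an orthomorphism on any vector lattice is \uo-\uo\ continuous on that lattice, applied now to $\obops(E)$, to conclude that $\Ri{T}$ and $\Le{T}$ are \uo-\uo\ continuous on $\obops(E)$.

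The main obstacle is conceptual rather than computational: one must resist attacking parts \partref{res:RL_unbounded_order1} and \partref{res:RL_unbounded_order2} through explicit order-convergence estimates of the defining nets $\abs{S_\alpha}\wedge R\oconv 0$ in $\obops(E)$, where composition interacts awkwardly with the lattice operations, and instead reduce them to the already available structural fact that these multiplications are orthomorphisms, combined with the \uo-\uo\ continuity of orthomorphisms. Keeping the two notions of unbounded order convergence rigorously apart—pointwise (\SUO) in parts \partref{res:RL_unbounded_order3} and \partref{res:RL_unbounded_order4} versus convergence in the operator lattice (\uo) in parts \partref{res:RL_unbounded_order1} and \partref{res:RL_unbounded_order2}—is exactly what makes the differing hypotheses ($\obops(E)$ for part \partref{res:RL_unbounded_order3}, $\Orth(E)$ for the others) fall out correctly.
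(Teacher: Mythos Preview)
Your proposal is correct and follows essentially the same approach as the paper: parts~\partref{res:RL_unbounded_order1} and~\partref{res:RL_unbounded_order2} are deduced from \cref{res:orthisorth} together with the \uo-\uo\ continuity of orthomorphisms (the paper cites \cite[Proposition~7.1]{deng_de_jeu_UNPUBLISHED:2020b}), part~\partref{res:RL_unbounded_order3} is declared trivial for exactly the reason you give, and part~\partref{res:RL_unbounded_order4} is handled by applying the \uo-\uo\ continuity of $T\in\Orth(E)$ pointwise.
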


\begin{proof}

For the parts~\ref{res:RL_unbounded_order1} and~\ref{res:RL_unbounded_order2}, we note that \cref{res:orthisorth} shows that $\Ri{T},\Le{T}\in\Orth(\obops(E))$. Their \uo-\uo\ continuity then follows from \cite[Proposition~7.1]{deng_de_jeu:2021}

Part~\ref{res:RL_unbounded_order3} is trivial.

We prove part~\ref{res:RL_unbounded_order4}. Let $\opnet$  be a net in $\obops(E)$ such that $S_\alpha\conv{\SUO}0$. Take an $x\in E$. Then $S_\alpha x\uoconv 0$ in $E$. It follows from   \cite[Proposition~7.1]{deng_de_jeu:2021} that $\Le{T}(S_\alpha)x=TS_\alpha x\uoconv 0$ in $E$, as desired.
\end{proof}

\begin{remark}
	For the proof of the parts~\ref{res:RL_unbounded_order1} and~\ref{res:RL_unbounded_order2} of \cref{res:RL_unbounded_order}, an appeal to the beginning of \cite[Section~2]{luxemburg_de_pagter:2002} can replace the use of \cref{res:orthisorth}. It is, however, only \cref{res:orthisorth} that permits the obvious extensions of the parts~\ref{res:RL_unbounded_order1} and~\ref{res:RL_unbounded_order2} of \cref{res:RL_unbounded_order} to (not necessarily regular) vector sublattices of $\obops(E)$ that are invariant under left or right composition with orthomorphisms, provided that they have the principal projection property.
\end{remark}

We now show that the condition in the parts~\ref{res:RL_unbounded_order1}, \ref{res:RL_unbounded_order2}, and~\ref{res:RL_unbounded_order4} of \cref{res:RL_unbounded_order} that $T\in\Orth(E)$ cannot be relaxed to $T\in\ocops(E)$.

\begin{examples}\label{exam:RL_unbounded_order}\quad
\begin{enumerate}
\item \label{exam:RL_unbounded_order1} We first give an example showing that $\Le{T}$ and $\Ri{T}$ need not be \uo-\uo\ continuous on $\obops(E)$ for $T\in \ocops(E)$.

Let $E=\Ell_p([0,1])$ with $1\leq p<\infty$. We define $T\in\obops(E)=\ocops(E)$ by setting
\begin{equation}\label{eq:RL_unbounded_order}
Tf\coloneqq \int f\,\mathrm{d}\mu \cdot \chi_{[0,1]}
\end{equation}
for $f\in E$. For $n\geq 1$, we define the positive operator $S_n$ on $E$ by setting
\begin{equation*}
S_nf(t)\coloneqq
\begin{cases}
f(t+1/n) & \text{ for }{t\in [0,(n-1)/n)};\\
f(t-(n-1)/n) & \text{ for } {t\in [(n-1)/n,1]}.
\end{cases}
\end{equation*}
We claim that $(S_n)_{n=1}^\infty$ is a disjoint sequence in $\obops(E)$. Let $m,n\geq 1$ with $m>n$. Take a $k\geq 1$ such that $1/k< 1/n-1/m$. For every $f\in E^+$,  \cite[Theorem~1.51]{aliprantis_burkinshaw_POSITIVE_OPERATORS_SPRINGER_REPRINT:2006} then implies that
  \[
  0\leq S_m\wedge S_n (f)\leq \sum_{i=1}^k S_m(f\cdot \chi_{[(i-1)/k,i/k]})\wedge S_n(f\cdot \chi_{[(i-1)/k,i/k]})=0
  \]
  because the supports of $S_m(f\cdot \chi_{[(i-1)/k,i/k]})$ and $S_n(f\cdot \chi_{[(i-1)/k,i/k]})$ are disjoint for $i=1,\dotsc, k$. Hence $S_m\wedge S_n=0$, as claimed.

By \cite[Corollary~3.6]{gao_troitsky_xanthos:2017}, the disjoint sequence $(S_n)_{n=1}^\infty$ is unbounded order convergent to zero in $\obops(E)$. On the other hand, it is easy to see that $\Ri{T}(S_n)=\Le{T}(S_n)=T\neq 0$ for all $n\geq 1$. Hence neither of $\seq{\Ri{T}(S_n)}{n}$ and $\seq{\Le{T}(S_n)}{n}$ is unbounded order convergent to zero in $\obops(E)$. This shows that neither $\Ri{T}$ nor $\Le{T}$ is \uo-\uo\ continuous on $\obops(E)$.

\item\label{exam:RL_unbounded_order2} We now give an example showing that $\Le{T}$ need not be \SUO-\SUO\ continuous on $\obops(E)$ for $T\in \ocops(E)$. Let $E=\Ell_p([0,1])$ with $1\leq p<\infty$. For $n\geq 1$, define the positive operator $S_n$ on $E$ by setting $S_n f\coloneqq 2^n \chi_{[1-1/2^{n-1},1-1/2^n]}\cdot f$ for $f\in E$.
Let $T\in\ocops(E)$ be defined as in \cref{eq:RL_unbounded_order}. For every $f\in E$, it is clear that $S_n f$ and $S_m f$ are disjoint whenever $m\neq n$, and then \cite[Corollary~3.6]{gao_troitsky_xanthos:2017} shows that $S_nf\uoconv 0$ in $E$. That is,  $\seq{S_n}{n}$ is strongly unbounded order convergent to zero. On the other hand, it is easily seen that $\Le{T}(S_n)\chi_{[0,1]}=\chi_{[0,1]}\neq 0$ for $n\geq 1$. This implies that $\seq{\Le{T}(S_n)}{n}$ is not strongly unbounded order convergent to zero, so that $\Le{T}$ is not \SUO-\SUO\ continuous on $\obops(E)$.
\end{enumerate}
\end{examples}

\begin{remark}\label{rem:RL_unbounded_order}
	\cref{exam:RL_unbounded_order} also shows that $\Ri{T}$ and $\Le{T}$ need not be \uo-$\uoLt_{\obops(E)}$\ continuous and that $\Le{T}$ need not be \SUO-\STRONG$\uoLt_E$ continuous on $\obops(E)$ for arbitrary $T\in\ocops(E)=\obops(E)$. In fact, the sequential versions of these continuity properties can already fail to hold, even in cases where $E$ is a Banach lattice with an order continuous norm.	
\end{remark}

We now turn to the Hausdorff \uoLtops, where we shall make use of \cref{res:all_three_or_none}.

\begin{proposition}\label{res:RL_uoLt}
Let $E$ be a Dedekind complete vector lattice that admits a \necun\ Hausdorff uo-Lebesgue topology $\uoLt_E$, so that $\obops(E)$ also admits a \necun\ Hausdorff uo-Lebesgue topology $\uoLt_{\obops(E)}$. Then:
\begin{enumerate}
    \item\label{res:RL_uoLt1} $\Ri{T}$ is $\uoLt_{\obops(E)}$-$\uoLt_{\obops(E)}$ continuous on  $\obops(E)$ for all $T\in \Orth(E)$;
    \item\label{res:RL_uoLt2} $\Le{T}$ is $\uoLt_{\obops(E)}$-$\uoLt_{\obops(E)}$ continuous on $\obops(E)$ for all $T\in \Orth(E)$;
     \item\label{res:RL_uoLt3} $\Ri{T}$ is \STRONG$\uoLt_E$-\STRONG$\uoLt_E$ continuous on  $\obops(E)$ for all $T\in \obops(E)$;
     \item\label{res:RL_uoLt4} $\Le{T}$ is \STRONG$\uoLt_E$-\STRONG$\uoLt_E$ continuous on  $\obops(E)$ for all $T\in \Orth(E)$.
\end{enumerate}
\end{proposition}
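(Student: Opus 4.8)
The plan is to mirror the proof of \cref{res:RL_unbounded_order} as closely as possible, exploiting the same two structural facts: that $\Ri{T}$ and $\Le{T}$ are themselves orthomorphisms on $\obops(E)$ when $T\in\Orth(E)$, and that orthomorphisms are continuous with respect to a possible Hausdorff \uoLtop. I would split the four parts into the two `intrinsic' statements~\partref{res:RL_uoLt1} and~\partref{res:RL_uoLt2}, which concern the topology $\uoLt_{\obops(E)}$ on $\obops(E)$ itself, and the two `strong' (pointwise) statements~\partref{res:RL_uoLt3} and~\partref{res:RL_uoLt4}, which I would reduce to the corresponding continuity on the base vector lattice $E$.

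For the parts~\partref{res:RL_uoLt1} and~\partref{res:RL_uoLt2}, I would first note that, as $E$ is Dedekind complete, $\obops(E)$ is Dedekind complete and hence has the principal projection property. Since $ST\in\obops(E)$ and $TS\in\obops(E)$ for all $S\in\obops(E)$ and $T\in\Orth(E)$, \cref{res:orthisorth} (applied with $\oplat=\obops(E)$) shows that $\Ri{T},\Le{T}\in\Orth(\obops(E))$. By hypothesis $E$ admits a Hausdorff \uoLtop, and therefore so does $\obops(E)$ by \cref{res:all_three_or_none}; this topology is $\uoLt_{\obops(E)}$. Invoking the fact, established in \cite{deng_de_jeu_UNPUBLISHED:2020b} (the analogue for a Hausdorff \uoLtop\ of \cite[Proposition~7.1]{deng_de_jeu_UNPUBLISHED:2020b} that was used for \cref{res:RL_unbounded_order}), that an orthomorphism on a vector lattice admitting a Hausdorff \uoLtop\ is continuous with respect to that topology, I conclude that $\Ri{T}$ and $\Le{T}$ are $\uoLt_{\obops(E)}$-$\uoLt_{\obops(E)}$ continuous. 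This is the same argument as for the parts~\partref{res:RL_unbounded_order1} and~\partref{res:RL_unbounded_order2} of \cref{res:RL_unbounded_order}, with unbounded order convergence replaced by convergence in the Hausdorff \uoLtop.

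For the parts~\partref{res:RL_uoLt3} and~\partref{res:RL_uoLt4}, I would unwind the definition of strong convergence with respect to $\uoLt_E$ as pointwise $\uoLt_E$-convergence. Let $\opnet$ be a net in $\obops(E)$ with $S_\alpha\conv{\STRONG\uoLt_E}0$, that is, $S_\alpha x\conv{\uoLt_E}0$ in $E$ for every $x\in E$. For part~\partref{res:RL_uoLt3}, fix $T\in\obops(E)$ and $x\in E$; then $\Ri{T}(S_\alpha)x=S_\alpha(Tx)\conv{\uoLt_E}0$ because $Tx$ is just another element of $E$, so $\Ri{T}(S_\alpha)\conv{\STRONG\uoLt_E}0$. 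No continuity of $T$ on $E$ is needed here, which is why $T\in\obops(E)$ suffices; this is the exact analogue of the trivial part~\partref{res:RL_unbounded_order3} of \cref{res:RL_unbounded_order}. For part~\partref{res:RL_uoLt4}, fix $T\in\Orth(E)$ and $x\in E$; then $\Le{T}(S_\alpha)x=T(S_\alpha x)$, and since $S_\alpha x\conv{\uoLt_E}0$ and $T$ is $\uoLt_E$-$\uoLt_E$ continuous on $E$ (again by the orthomorphism-continuity result), I obtain $T(S_\alpha x)\conv{\uoLt_E}0$, whence $\Le{T}(S_\alpha)\conv{\STRONG\uoLt_E}0$.

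I expect no genuine obstacle beyond marshalling the correct inputs: the proof carries essentially no new content and rests entirely on \cref{res:orthisorth}, on \cref{res:all_three_or_none} (to guarantee that $\uoLt_{\obops(E)}$ exists, so that the parts~\partref{res:RL_uoLt1} and~\partref{res:RL_uoLt2} even make sense), and on the previously-established $\uoLt$-$\uoLt$ continuity of orthomorphisms, which is the one nontrivial ingredient and is imported from \cite{deng_de_jeu_UNPUBLISHED:2020b}. The only point worth care is the asymmetry between right and left multiplication in the strong setting: the pointwise identity $\Ri{T}(S_\alpha)x=S_\alpha(Tx)$ merely reindexes the test vector and so requires nothing of $T$, whereas $\Le{T}(S_\alpha)x=T(S_\alpha x)$ feeds a $\uoLt_E$-null net through $T$ and therefore does require $T$ to be $\uoLt_E$-continuous, i.e.\ $T\in\Orth(E)$.
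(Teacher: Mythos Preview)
Your proposal is correct and follows essentially the same approach as the paper: parts~\partref{res:RL_uoLt1} and~\partref{res:RL_uoLt2} via \cref{res:orthisorth} combined with the $\uoLt$-continuity of orthomorphisms (which is \cite[Corollary~7.3]{deng_de_jeu_UNPUBLISHED:2020b}, the exact reference you anticipated as the analogue of \cite[Proposition~7.1]{deng_de_jeu_UNPUBLISHED:2020b}), part~\partref{res:RL_uoLt3} trivially, and part~\partref{res:RL_uoLt4} again from \cite[Corollary~7.3]{deng_de_jeu_UNPUBLISHED:2020b} applied on $E$.
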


\begin{proof}
We know from \cref{res:orthisorth} that $\Ri{T},\Le{T}\in \Orth(\obops(E))$ when $T\in\Orth(E)$, and then the parts~\ref{res:RL_uoLt1} and~\ref{res:RL_uoLt2} follow from \cite[Corollary~7.3]{deng_de_jeu:2021}.

Part~\ref{res:RL_uoLt3} is trivial.

Part~\ref{res:RL_uoLt4} follows from \cite[Corollary~7.3]{deng_de_jeu:2021}.
\end{proof}

We now show that the condition  in the parts~\ref{res:RL_uoLt1}, \ref{res:RL_uoLt2}, and~\ref{res:RL_uoLt4} of \cref{res:RL_uoLt} that $T\in\Orth(E)$ cannot be relaxed to $T\in\ocops(E)$.

\begin{examples}\label{exam:RL_uoLt}\quad
	\begin{enumerate}
		\item  We first give an example showing that $\Le{T}$ and $\Ri{T}$ need not be  $\uoLt_{\obops(E)}$-$\uoLt_{\obops(E)}$ continuous on $\obops(E)$ for $T\in \ocops(E)$. For this, we resort to the context and notation of part~\ref{res:RL_unbounded_order1} of \cref{exam:RL_unbounded_order}. In that example, we know that $S_n\uoconv 0$ in $\obops(E)$, and then certainly $S_n\conv{\uoLt_{\obops(E)}} 0$.  Since $\Ri{T}(S_n)=\Le{T}(S_n)=T\neq 0$ for all $n\geq 1$, we see that neither $\Ri{S}$ nor $\Le{S}$ is $\uoLt_{\obops(E)}$-$\uoLt_{\obops(E)}$ continuous on  $\obops(E)$.
		\item We give an example showing that $\Le{T}$ need not be \STRONG$\uoLt_E$-\STRONG$\uoLt_E$ continuous on  $\obops(E)$ for $T\in\ocops(E)$. For this, we resort to the context and notation of part~\ref{res:RL_unbounded_order2} of \cref{exam:RL_unbounded_order}. In that example, we know that $S_nf\uoconv 0$ in $E$ for $f\in E$. Then certainly $S_n f\conv{\uoLt_E} 0$ for $f\in E$. Since $\Le{T}(S_n)\chi_{[0,1]}=\chi_{[0,1]}\neq 0$ for all $n\geq 1$, we see that $\Le{T}$ is not \STRONG$\uoLt_E$-\STRONG$\uoLt_E$ continuous.
	\end{enumerate}
\end{examples}

\begin{remark}\label{rem:RL_uoLt}
	\cref{exam:RL_uoLt} also shows that $\Ri{T}$ and $\Le{T}$ need not even be $\uoLt_{\obops(E)}$-$\uoLt_{\obops(E)}$ continuous and that $\Le{T}$ need not even be $\STRONG\uoLt_{\obops(E)}$-\STRONG$\uoLt_{\obops(E)}$ continuous on $\obops(E)$ for arbitrary $T\in\ocops(E)=\obops(E)$. In fact, the sequential versions of these continuity properties can already fail to hold, even in cases where $E$ is a Banach lattice with an order continuous norm.	
\end{remark}

We now have sufficient material at our disposal to determine the tables mentioned at the beginning of this section.

For right multiplications on $\obops(E)$, the results are in Table~\ref{table:right_multiplication}. The value in a cell with a row label indicating a convergence structure $\convstru_1$ and a column label indicating a convergence structure $\convstru_2$ is to be interpreted as follows:
\begin{enumerate}
	\item A value $\{0\}$ (resp.\ $\Orth(E)$, resp.\ $\ocops(E)$) means that $\Ri{T}$ is $\convstru_1$-$\convstru_2$ continuous on $\obops(E)$ for every Dedekind complete vector lattice $E$ and for every $T\in\{0\}$ (resp.\ $T\in\Orth(E)$, resp.\ $T\in\ocops(E)$), but there exist a Dedekind complete vector lattice $E$ and a $T\in\Orth(E)$ (resp.\ $T\in\ocops(E)$, resp.\ $T\in\obops(E)$) for which this is  not the case;
	\item A value $\obops(E)$ means that $\Ri{T}$ is $\convstru_1$-$\convstru_2$ continuous on $\obops(E)$ for every Dedekind complete vector lattice $E$ and for every $T\in\obops(E)$.
\end{enumerate}	

\setcounter{table}{\value{theorem}}

	\begin{center}
		\begin{threeparttable}[H]
			\small
			\caption{\small Continuity of right multiplications on $\obops(E)$.}
			\begin{tabular}{ |  C{4em} |  C{4em} |  C{4em} | C{4em} |  C{4em} |  C{4em}|  C{4em} | }
				\hline
				& $\o$ & $\uo$ & $\uoLt_{\obops(E)}$ & $\SO$ & $\SUO$ & $\STRONG\uoLt_E$ \\
				\hline
				$\o$ & $\obops(E)$ & $\obops(E)$ & $\obops(E)$ & $\obops(E)$ &$\obops(E)$ & $\obops(E)$ \\
				\hline
				$\uo$  & $\{0\}$ & $\Orth(E)$ & $\Orth(E)$ & $\{0\}$ & $\{0\}$ & $\{0\}$ \\
				\hline
				$\uoLt_{\obops(E)}$  & $\{0\}$ & $\{0\}$ & $\Orth(E)$ & $\{0\}$ & $\{0\}$ & $\{0\}$ \\
				\hline
				$\SO$  & $\{0\}$ & $\{0\}$ & $\{0\}$ & $\obops(E)$ & $\obops(E)$ & $\obops(E)$\\
				\hline
				$\SUO$ & $\{0\}$ & $\{0\}$ & $\{0\}$ & $\{0\}$ & $\obops(E)$ & $\obops(E)$ \\
				\hline
				$\mathrm{S}\uoLt_E$ & $\{0\}$ & $\{0\}$ & $\{0\}$ & $\{0\}$ & $\{0\}$ & $\obops(E)$\\
				\hline
			\end{tabular}\label{table:right_multiplication}
		\end{threeparttable}
	\end{center}

\vskip 1em

As mentioned in the beginning of this section, a zero in Table~\ref{table:convergences_general} gives $\{0\}$ in Table~\ref{table:right_multiplication}. It is easily verified that the remaining values can be determined using that order convergence implies unbounded order convergence, which implies $\uoLt_E$ convergence when applicable; that analogous implications hold for their strong versions; that order convergence implies strong order convergence; combined with \cref{res:RL_order}, \cref{res:RL_unbounded_order}, \cref{rem:RL_unbounded_order}, \cref{res:RL_uoLt}, and \cref{rem:RL_uoLt}.

For left multiplications on $\obops(E)$, the results are in Table~\ref{table:left_multiplication}, with a similar interpretation of the values in the cells as for Table~\ref{table:right_multiplication}.

	\begin{center}
		\begin{threeparttable}[H]
			\small
			\caption{\small Continuity of left multiplications on $\obops(E)$.}
			\begin{tabular}{ |  C{4em} |  C{4em} |  C{4em} | C{4em} |  C{4em} |  C{4em}|  C{4em} | }
				\hline
				& $\o$ & $\uo$ & $\uoLt_{\obops(E)}$ & $\SO$ & $\SUO$ & $\mathrm{S}\uoLt_E$ \\
				\hline
				$\o$ & $\ocops(E)$ & $\ocops(E)$ & $\ocops(E)$ &$\ocops(E)$ &$\ocops(E)$ &$\ocops(E)$ \\
				\hline
				$\uo$ & $\{0\}$ & $\Orth(E)$ & $\Orth(E)$ & $\{0\}$ & $\{0\}$ & $\{0\}$\\
				\hline
				$\uoLt_{\obops(E)}$  & $\{0\}$ & $\{0\}$ & $\Orth(E)$ & $\{0\}$ & $\{0\}$ & $\{0\}$ \\
				\hline
				$\SO$ & $\{0\}$ & $\{0\}$  & $\{0\}$  & $\ocops(E)$ & $\ocops(E)$ & $\ocops(E)$ \\
				\hline
				$\SUO$  & $\{0\}$ & $\{0\}$ & $\{0\}$ & $\{0\}$ & $\Orth(E)$ & $\Orth(E)$ \\
				\hline
				$\STRONG\uoLt_E$  & $\{0\}$ & $\{0\}$ & $\{0\}$ & $\{0\}$ & $\{0\}$ & $\Orth(E)$\\
				\hline
			\end{tabular}\label{table:left_multiplication}
		\end{threeparttable}
	\end{center}

\vskip 1em
 For Table~\ref{table:left_multiplication}, the values of the cells can be determined using the zeroes in Table~\ref{table:convergences_general}, the `standard implications' as listed for Table~\ref{table:right_multiplication}, combined with \cref{res:RL_order}, \cref{rem:RL_order}, \cref{res:RL_unbounded_order}, \cref{rem:RL_unbounded_order},  \cref{res:RL_uoLt}, and \cref{rem:RL_uoLt}.

For multiplications on $\Orth(E)$, the continuity properties are given by Table~\ref{table:multiplication_on_orthomorphisms}. In that table, a value $1$ in a cell with a row label indicating a convergence structure $\convstru_1$ and a column label indicating a convergence structure $\convstru_2$ means that the maps $\Ri{T}=\Le{T}:\Orth(E)\to\Orth(E)$ is $\convstru_1$-$\convstru_2$ continuous for all $T\in\Orth(E)$. A value $0$ means that there exists a Dedekind complete vector lattice $E$ and a $T\in\Orth(E)$ for which this is not the case.

\begin{threeparttable}[H]
	\small
	\caption{\small Continuity of multiplications on $\Orth(E)$.}
	\begin{tabular}{ |  C{4em} |  C{4em} |  C{4em} | C{4em} |  C{4em} |  C{4em}|  C{4em} | }
		\hline
		& $\o$ & $\uo$ & $\uoLt_{\Orth}(E)$ & $\SO$ & $\SUO$ & $\STRONG\uoLt_E$ \\
		\hline
		$\o$ & $1$ & $1$ & $1$ &$1$ &$1$ &$1$ \\
		\hline
		$\uo$ & $0$ & $1$ & $1$ & $0$ & $1$ & $1$\\
		\hline
		$\uoLt_{\Orth(E)}$  & $0$ & $0$ & $1$ & $0$ & $0$ & $1$ \\
		\hline
		$\SO$ & $0$ & $1$  & $1$  & $1$ & $1$ & $1$ \\
		\hline
		$\SUO$  & $0$ & $1$ & $1$ & $0$ & $1$ & $1$ \\
		\hline
		$\STRONG\uoLt_E$  & $0$ & $0$ & $1$ & $0$ & $0$ & $1$\\
		\hline
	\end{tabular}\label{table:multiplication_on_orthomorphisms}
\begin{tablenotes}
	\item [{}] \center{In $\Orth(E)$, \uo\ and \SUO\ convergence of nets coincide, as do a possible $\uoLt_{\Orth}(E)$ and $\STRONG\uoLt_E$ convergence.}
\end{tablenotes}
\end{threeparttable}

\vskip 1em
  The values in the cells of Table~\ref{table:multiplication_on_orthomorphisms} can be determined using the zeroes in Table~\ref{table:convergences_orthomorphisms}, the `standard implications' as listed for Table~\ref{table:right_multiplication}; the fact that $\Orth(E)$ is a regular vector sublattice of $\obops(E)$; the facts that unbounded order convergence and strong unbounded order convergence coincide on $\Orth(E)$, as do a possible $\uoLt_{\Orth(E)}$ and $\STRONG\uoLt_E$ convergence; combined with \cref{res:RL_order}, \cref{res:RL_unbounded_order}, and \cref{res:RL_uoLt}.

\section{Simultaneous continuity of multiplications and adherences of subalgebras of  $\obops(E)$ }\label{sec:simultaneous_continuity}

\noindent
In this section, we study the simultaneous continuity of the multiplications in subalgebras of $\obops(E)$ (where $E$ is a Dedekind complete vector lattice) with respect to the six convergence structures under consideration in this paper. This is motivated by questions of the following type. Suppose that $E$ admits a Hausdorff \uoLtop. Take a subalgebra (not necessarily a vector lattice subalgebra) $\opalg$ of $\obops(E)$. Is its adherence $\adh{\SuoLt_E}{\opalg}$ in $\obops(E)$ with respect to strong $\uoLt_E$ convergence again a subalgebra of $\obops(E)$? This is not always the case, not even when $\opalg\subseteq{\ocops(E)}$; see \cref{exam:uoLt_not_algebra}. When $\opalg\subseteq\Orth(E)$, however, the answer is affirmative; see \cref{res:uoLt_is_alg}.

As is easily verified, it follows already from the continuity of the left and right multiplications with respect to strong $\uoLt_E$ convergence (see \cref{res:RL_uoLt}) that $\adh{\SuoLt_E}{\opalg}\cdot \adh{\SuoLt_E}{\opalg}\subseteq \adh{\SuoLt_E}{ \adh{\SuoLt_E}{\opalg}}$ when $\opalg\subseteq \Orth(E)$, but that is not sufficient to show that $\adh{\SuoLt_E}{\opalg}$ is a subalgebra. The simultaneous continuity of the multiplication in $\Orth(E)$ with respect to strong $\uoLt_E$ convergence in $\Orth(E)$ would be sufficient to conclude this, and this can indeed be established; see \cref{res:simultaneous_uo-Lebesgue_continuity}.

For each of the remaining five convergence structures, we follow the same pattern. We establish (this also relies on the single variable results in \cref{sec:continuity_in_one_variable}) the simultaneous continuity of the multiplication with respect to the convergence structure under consideration, and then conclude that the pertinent adherence of a subalgebra is again a subalgebra. For the latter result it is\textemdash as the above example already indicates\textemdash essential to impose an extra condition on the subalgebra $\opalg$. This condition depends on the convergence structure under consideration. Natural extra conditions are that $\opalg$ be a subalgebra of $\Orth(E)$ or of $\ocops(E)$, and  we do indeed obtain positive results under such conditions. We also have fairly complete results showing that the relaxation of the pertinent condition to the `natural' next lenient one does, in fact, render the statement that the adherence is a subalgebra again invalid. This also implies that multiplication is then not simultaneously continuous.

In the cases where the lattice operations are known to be simultaneously continuous with respect to the convergence structure under consideration, it obviously also follows that the pertinent adherence of a vector lattice subalgebra is a vector lattice subalgebra again.

We shall now embark on this programme. We start with order convergence, which is the easiest case. For this, we have the following result on the simultaneous continuity of multiplication.

\begin{proposition}\label{res:simultaneous_order_continuity}
\!Let $E$ be a Dedekind complete vector \hskip -0.5pt lattice.\! Suppose that $\opnet$ is a net in $\ocops(E)$ such that $S_\alpha\oconv S$ in $\obops(E)$ for some $S\in\obops(E)$ and that $\opnettwo\subseteq\obops(E)$ is a net such that $T_\beta\oconv T$ in $\obops(E)$ for some $T\in\obops(E)$. Then $S\in\ocops(E)$ and $S_\alpha T_\beta\oconv ST$ in $\obops(E)$.
\end{proposition}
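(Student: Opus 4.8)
The plan is to establish the two assertions in order, since the order continuity of the limit $S$ is exactly what makes the convergence of the product net work. For the claim that $S\in\ocops(E)$: because $E$ is Dedekind complete, $\ocops(E)$ is a band in $\obops(E)$ (see, e.g., \cite[Theorem~1.57]{aliprantis_burkinshaw_POSITIVE_OPERATORS_SPRINGER_REPRINT:2006}), and bands are order closed for order-convergent nets. As $\opnet$ is a net in $\ocops(E)$ with $S_\alpha\oconv S$ in $\obops(E)$, we conclude $S\in\ocops(E)$. This is the only use of the hypothesis that the $S_\alpha$ are order continuous, and it is decisive below.

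For the convergence $S_\alpha T_\beta\oconv ST$, I would work with the net indexed by the product directed set and start from the identity
\[
S_\alpha T_\beta - ST = (S_\alpha-S)(T_\beta-T)+(S_\alpha-S)T+S(T_\beta-T),
\]
which, together with $\abs{RQ}\le\abs R\,\abs Q$ for $R,Q\in\obops(E)$, yields
\[
\abs{S_\alpha T_\beta-ST}\le\abs{S_\alpha-S}\,\abs{T_\beta-T}+\abs{S_\alpha-S}\,\abs T+\abs S\,\abs{T_\beta-T}.
\]
Choosing dominating nets $P_\gamma\downarrow 0$ and $Q_\delta\downarrow 0$ for $S_\alpha\oconv S$ and $T_\beta\oconv T$, so that eventually $\abs{S_\alpha-S}\le P_{\gamma_0}$ and $\abs{T_\beta-T}\le Q_{\delta_0}$, the right-hand side is eventually dominated by
\[
W_{(\gamma_0,\delta_0)}\coloneqq P_{\gamma_0}Q_{\delta_0}+P_{\gamma_0}\abs T+\abs S\,Q_{\delta_0},
\]
a net over the product of the two dominating index sets.

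The heart of the matter is to check $W_{(\gamma,\delta)}\downarrow 0$. Each summand is decreasing. For $P_\gamma Q_\delta$ and $P_\gamma\abs T$, right multiplication of an operator net decreasing to $0$ by a positive operator decreases to $0$ by \cite[Theorem~1.18]{aliprantis_burkinshaw_POSITIVE_OPERATORS_SPRINGER_REPRINT:2006} (as already used in the proof of \cref{res:RL_order}); in particular the double-indexed net $P_\gamma Q_\delta$ has infimum $0$, since fixing $\delta$ already forces any lower bound below $\inf_\gamma P_\gamma Q_\delta=0$. For $\abs S\,Q_\delta$, which is \emph{left} multiplication by $\abs S$, the infimum is $0$ precisely because $S$, hence $\abs S$, is order continuous, by the reasoning underlying part~\partref{res:RL_order2} of \cref{res:RL_order}; this is where the first assertion is indispensable. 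Finally, a sum of nets each decreasing to $0$ over a common directed set again decreases to $0$: if $0\le w\le a_i+b_i$ for all $i$, then $w\le a_i+b_j$ for all $i,j$, and fixing $j$ while taking the infimum over $i$ gives $w\le b_j$, so $w=0$.

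I expect the main obstacle to be conceptual rather than computational: recognising that the term $S(T_\beta-T)$ cannot be controlled for a general order bounded $S$, so that order continuity of the limit must be proved first. This also explains the asymmetry in the hypotheses—the $S_\alpha$ are required to be order continuous because $S$ occupies the \emph{left} multiplication position, where order continuity of the multiplier is needed (part~\partref{res:RL_order2} of \cref{res:RL_order}), whereas the $T_\beta$ may range over all of $\obops(E)$ because $T$ occupies the \emph{right} position, which is order continuous for every order bounded multiplier by part~\partref{res:RL_order1}.
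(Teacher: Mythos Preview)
Your proof is correct and follows essentially the same approach as the paper, invoking the \o-\o\ continuity of right multiplication by any order bounded operator and of left multiplication by any order continuous operator (parts~\partref{res:RL_order1} and~\partref{res:RL_order2} of \cref{res:RL_order}). The only cosmetic difference is that the paper uses the two-term decomposition $S_\alpha T_\beta-ST=(S_\alpha-S)T_\beta+S(T_\beta-T)$ after first passing to a tail where $\abs{T_\beta}\leq R$ for some fixed $R$, thereby avoiding your cross term $(S_\alpha-S)(T_\beta-T)$ altogether.
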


\begin{proof}
	It is clear that $S\in\ocops(E)$. By passing to a tail, we may suppose that $\netgen{\abs{T_\beta}}{\beta\in\istwo}$ is bounded above by some $R\in\pos{\obops(E)}$. Using the parts~\ref{res:RL_order1} and~\ref{res:RL_order2} of \cref{res:RL_order} for the final order convergence, we have that
	\begin{align*}
		\abs{S_\alpha T_\beta-ST}&\leq \abs{S_\alpha T_\beta-ST_\beta}+\abs{ST_\beta-ST}
		\\ &\leq \abs{S_\alpha-S}R+\abs{S}\abs{T_\beta-T}\oconv 0
	\end{align*}
in $\obops(E)$. Hence $S_\alpha T_\beta\oconv ST$ in $\obops(E)$.
\end{proof}

The following is now clear from \cref{res:simultaneous_order_continuity} and the simultaneous order continuity of the lattice operations.

\begin{corollary}\label{res:order_is_al}
Let $E$ be a Dedekind complete vector lattice. Suppose that $\opalg$ is a subalgebra of $\ocops(E)$. Then the adherence $\oadh{\opalg}$ in $\obops(E)$ is also a subalgebra of $\ocops(E)$. When $\opalg$ is a vector lattice subalgebra of $\ocops(E)$, then so is $\oadh{\opalg}$.
\end{corollary}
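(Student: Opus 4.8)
The plan is to establish three things: that $\oadh{\opalg}$ is contained in $\ocops(E)$, that it is a subalgebra of $\obops(E)$, and, under the extra hypothesis, that it is closed under the lattice operations. In each case the mechanism is the same: the relevant algebraic or lattice operation is (simultaneously) order continuous, so that applying it to nets drawn from $\opalg$ produces a net that again lies in $\opalg$ and that order converges to the element we wish to exhibit as a member of $\oadh{\opalg}$. The only genuinely nontrivial continuity input is the one for composition, which is precisely \cref{res:simultaneous_order_continuity}.

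First I would treat the containment. Given $S\in\oadh{\opalg}$, choose a net $\opnet$ in $\opalg\subseteq\ocops(E)$ with $S_\alpha\oconv S$ in $\obops(E)$. Since $\ocops(E)$ is a band in $\obops(E)$, hence order closed, the order limit $S$ belongs to $\ocops(E)$; this is exactly the first assertion of \cref{res:simultaneous_order_continuity}. Thus $\oadh{\opalg}\subseteq\ocops(E)$.

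Next I would verify the subalgebra structure. Take $S,T\in\oadh{\opalg}$ together with nets $\opnet$ and $\opnettwo$ in $\opalg$ that are order convergent in $\obops(E)$ to $S$ and $T$ respectively. Passing to the product directed set $\is\times\istwo$, the net $\netgen{S_\alpha+cT_\beta}{(\alpha,\beta)\in\is\times\istwo}$ lies in $\opalg$ and order converges to $S+cT$ for every scalar $c$, so $\oadh{\opalg}$ is a linear subspace. For closure under composition I would apply \cref{res:simultaneous_order_continuity} to the net $\opnet$ in $\ocops(E)$ and the net $\opnettwo$ in $\obops(E)$: it yields $S_\alpha T_\beta\oconv ST$ over $\is\times\istwo$. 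Each $S_\alpha T_\beta$ lies in $\opalg$ because $\opalg$ is a subalgebra, whence $ST\in\oadh{\opalg}$.

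Finally, if $\opalg$ is a vector lattice subalgebra, I would invoke the simultaneous order continuity of the lattice operations: over $\is\times\istwo$ one has $S_\alpha\vee T_\beta\oconv S\vee T$, and the net on the left lies in $\opalg$, so $S\vee T\in\oadh{\opalg}$; the argument for $\wedge$ is identical. I do not anticipate a real obstacle, since \cref{res:simultaneous_order_continuity} already supplies the only delicate continuity statement. The single point requiring care is the bookkeeping of the two index sets $\is$ and $\istwo$, which may differ; this is handled uniformly by forming the product net over $\is\times\istwo$ and observing that membership in $\opalg$ is preserved under each of the operations involved.
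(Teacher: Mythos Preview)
Your proof is correct and follows exactly the approach the paper indicates: the paper's own proof consists of the single sentence that the result is clear from \cref{res:simultaneous_order_continuity} and the simultaneous order continuity of the lattice operations, and you have simply unpacked that sentence in detail. The only thing you add beyond what the paper makes explicit is the observation that $\ocops(E)$ is order closed in $\obops(E)$ (being a band), which justifies the containment $\oadh{\opalg}\subseteq\ocops(E)$; this is already implicit in the first conclusion of \cref{res:simultaneous_order_continuity}.
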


We now show that the condition in \cref{res:order_is_al}  that $\opalg\subseteq\ocops(E)$ cannot be relaxed to $\opalg\subseteq \obops(E)$.

\begin{example}\label{exam:order_is_al}
Take $E=\ell_\infty$ and let $\seq{e_n}{n}$ be the standard sequence of unit vectors in $E$. We define $T\in\obops(E)$ as in \cref{exam:RL_order}. For $n\geq 1$, we now define $S_n^\prime\in\ocops(E)$ by setting
\[
S_n^\prime x\coloneqq x_2\bigvee_{i=3}^{n+2} e_i,
\]
and $S^\prime\in \ocops(E)$ by setting
\[
S^\prime x\coloneqq x_2 \bigvee_{i=3}^\infty e_i
\]
for $x=\bigvee_{i=1}^\infty x_ie_i\in E$. It is easily verified that $T^2=0$, that $S_n^\prime S_m^\prime=0$ for $m,n\geq 1$, and that $S_n^\prime T=T S_n^\prime=0$ for $n\geq 1$. Hence $\opalg\coloneqq \Span\{T,\,S_n^\prime:n\geq 1 \}$ is a subalgebra of $\obops(E)$. As $S_n^\prime\uparrow S^\prime$ in $\obops(E)$, both $S^\prime$ and $T$ are elements of $\oadh{\opalg}$.

However, $TS^\prime\notin\oadh{\opalg}$. In fact, $TS^\prime$ is not even an element of $\Soadh{\opalg}\supseteq \oadh{\opalg}$. To see this, we observe that $TS^\prime e_2=e_1\neq 0$, and that, as is easily verified, $TS^\prime e_2\perp Re_2$ for all $R\in\opalg$. Hence there cannot exist a net $\netgen{R_\alpha}{\alpha\in\is}\subseteq\opalg$ such that $R_\alpha e_2\oconv TS^\prime e_2$ in $E$, let alone such that $R_\alpha\SOconv TS^\prime$ in $\obops(E)$.
\end{example}

Now we turn to the strong order adherences of subalgebras of $\obops(E)$. We start by showing that $\Orth(E)$ is closed in $\obops(E)$ under the convergences under consideration in this paper. We recall from \cref{res:all_three_or_none} that either all of $E$, $\Orth(E)$, and $\obops(E)$ admit a Hausdorff \uoLtop, or none does.

\begin{lemma}\label{res:orth_is_closed}
Let $E$ be a Dedekind complete vector lattice. Then $\Orth(E)$ is closed in $\obops(E)$ under order convergence, unbounded order convergence, strong order convergence, and strong unbounded order convergence. Suppose that $E$ admits a \necun\ Hausdorff \uoLtop. Then $\Orth(E)$ is closed in $\obops(E)$ under $\uoLt_{\obops(E)}$ convergence and strong $\uoLt_E$ convergence.
\end{lemma}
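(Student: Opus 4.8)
The plan is to reduce all six cases to a single invariant—commuting with order projections—and then to feed each convergence structure into the matching one-variable continuity result from \cref{sec:continuity_in_one_variable}. Since every one of the six adherences under consideration is formed \emph{inside} $\obops(E)$, any operator $T$ lying in such an adherence is automatically order bounded; hence it suffices to prove that $T$ is band preserving. Because $E$ is Dedekind complete, every band is a projection band, and a standard computation shows that $T\in\obops(E)$ is band preserving if and only if $PT=TP$ for every order projection $P$ on $E$: if $T$ is band preserving and $P$ is the projection onto a band $B$ with disjoint complement $B^{\mathrm d}$, then $T(Px)\in B_{Px}\subseteq B$ and $T(I-P)x\in B_{(I-P)x}\subseteq B^{\mathrm d}$ for all $x$, whence $PTx=T(Px)=TPx$; conversely, taking $P$ to be the projection onto the band $B_x$ generated by $x$ gives $Tx=T(Px)=PTx\in B_x$.

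First I would record that every order projection $P$ is an orthomorphism: it is band preserving and satisfies $0\le P\le\Id$, so in particular $P\in\ocops(E)\cap\Orth(E)$. Consequently, for any net $\opnet$ in $\Orth(E)$ and any order projection $P$, each $T_\alpha$ is band preserving, so $PT_\alpha=T_\alpha P$ for every $\alpha$ (this is the same computation as the equivalence above).

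The core argument is then uniform across all six convergences. Suppose $\opnet\subseteq\Orth(E)$ converges to $T\in\obops(E)$ in one of these senses, and fix an order projection $P$. Applying the continuity of left and right multiplication by the orthomorphism $P$—supplied by \cref{res:RL_order} for $\o$ and $\SO$, by \cref{res:RL_unbounded_order} for $\uo$ and $\SUO$, and by \cref{res:RL_uoLt} for $\uoLt_{\obops(E)}$ and $\STRONG\uoLt_E$ (the latter two under the standing hypothesis that $E$, equivalently $\obops(E)$, admits a Hausdorff \uoLtop, cf.\ \cref{res:all_three_or_none})—one obtains that $\netgen{PT_\alpha}{\alpha\in\is}$ converges to $PT$ and that $\netgen{T_\alpha P}{\alpha\in\is}$ converges to $TP$, in the same sense. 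But these two nets are identical, and each of the six convergences has unique limits (order and unbounded order limits are unique in the Dedekind complete space $\obops(E)$, hence also pointwise for $\SO$ and $\SUO$, and the two topological convergences are Hausdorff). Therefore $PT=TP$, and since $P$ was arbitrary, $T$ is band preserving, i.e.\ $T\in\Orth(E)$.

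I expect the only genuine points to verify are bookkeeping ones: that $P$ lands in exactly the operator class for which the relevant one-variable continuity statement is phrased (this is precisely why it matters that an order projection is simultaneously order continuous and an orthomorphism, so that both $\Le{P}$ and $\Ri{P}$ are covered), and the uniqueness of limits in the two strong/pointwise settings, which reduces to the uniqueness of order, unbounded order, and $\uoLt_E$ limits in $E$ itself. The one conceptual step—which I regard as the heart of the proof—is recognising that ``commuting with all order projections'' is the right band-preservation invariant to propagate to the limit; once that is in place, each convergence structure is dispatched by the corresponding continuity result of \cref{sec:continuity_in_one_variable}.
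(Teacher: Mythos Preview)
Your argument is correct, but it takes a substantially different route from the paper's proof. The paper dispatches all six cases in two sentences by observing that $\Orth(E)$ is a band in $\obops(E)$, and that bands in any vector lattice are closed under order convergence, unbounded order convergence, and convergence in a locally solid topology (disjoint complements are preserved under each). This handles the three uniform convergences directly on $\obops(E)$; for the three strong convergences one applies the same closure properties to bands in $E$ instead: if $T_\alpha x\to Tx$ in one of the three senses and each $T_\alpha x$ lies in the band $B_x$ generated by $x$, then $Tx\in B_x$, whence $T$ is band preserving.

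Your approach\textemdash reformulating band preservation as commutation with all order projections and then invoking the separate one-variable continuity results of \cref{res:RL_order}, \cref{res:RL_unbounded_order}, and \cref{res:RL_uoLt}\textemdash is logically sound and involves no circularity (those propositions do not rely on the lemma). It has the virtue of treating all six cases via a single template, but it is considerably heavier machinery than what is actually needed: the paper's proof is entirely elementary and does not depend on \cref{sec:continuity_in_one_variable} at all. In short, you have rediscovered a correct but less economical argument; the key observation you are missing is simply that ``$\Orth(E)$ is a band'' already does all the work.
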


\begin{proof} A band in a (not necessarily Dedekind complete) vector lattice is not only closed under order convergence, but also closed under unbounded order convergence (see \cite[Proposition~3.15]{gao_troitsky_xanthos:2017}) and under convergence in a Hausdorff locally solid linear topology on the lattice (see \cite[Theorem~2.21(d)]{aliprantis_burkinshaw_LOCALLY_SOLID_RIESZ_SPACES_WITH_APPLICATIONS_TO_ECONOMICS_SECOND_EDITION:2003}. This implies that $\Orth(E)$ is closed in $\obops(E)$ under order convergence, unbounded order convergence, and convergence in a possible Hausdorff \uoLtop\ on $\obops(E)$. It also implies that, for each of the three strong convergences in $\obops(E)$ under consideration, a limit in $\obops(E)$ of a net of orthomorphisms, i.e., of order bounded band preserving operators, is again an order bounded band preserving operator, i.e., an orthomorphism.
\end{proof}

\begin{proposition}\label{res:simultaneous_strong_order_continuity}
	\!Let $E$ be a Dedekind complete vector \hskip -0.5pt lattice.\! Suppose that $\opnet$ is a net in $\Orth(E)$ such that $S_\alpha\SOconv S$ in $\obops(E)$ for some $S\in\obops(E)$ and that $\opnettwo\subseteq\obops(E)$ is a net such that $T_\beta\SOconv T$ in $\obops(E)$ for some $T\in\obops(E)$. Then $S\in\Orth(E)$, and $S_\alpha T_\beta\SOconv ST$ in $\obops(E)$.
\end{proposition}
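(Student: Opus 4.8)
The plan is to obtain $S\in\Orth(E)$ from the closedness result already in hand, and then to prove order convergence of the product net $\netgen{S_\alpha T_\beta}{(\alpha,\beta)\in\is\times\istwo}$ by a pointwise estimate in the spirit of the proof of \cref{res:simultaneous_order_continuity}, the decisive new ingredient being the modulus identity for orthomorphisms. First, since $\opnet$ lies in $\Orth(E)$ and $S_\alpha\SOconv S$ in $\obops(E)$, \cref{res:orth_is_closed} gives at once that $S\in\Orth(E)$. In particular $S$ is order continuous, and $S_\alpha-S\in\Orth(E)$ for every $\alpha\in\is$, since $\Orth(E)$ is a linear subspace of $\obops(E)$.

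Now I would fix $x\in E$ and set $y_\beta\coloneqq T_\beta x$ and $y\coloneqq Tx$, so that $y_\beta\oconv y$ in $E$ because $T_\beta\SOconv T$. Recalling that $\SOconv$ is pointwise order convergence, it suffices to show $S_\alpha y_\beta\oconv Sy$ along the product net. I would split
\[
S_\alpha y_\beta - Sy = (S_\alpha - S)y_\beta + S(y_\beta - y).
\]
The second summand order converges to $0$ in $\beta$, since $S$ is order continuous and $y_\beta\oconv y$. For the first summand I would use the general estimate $\abs{(S_\alpha-S)y_\beta}\le\abs{S_\alpha-S}\bigl(\abs{y_\beta}\bigr)$, valid for order bounded operators on the Dedekind complete $E$, together with an order bound $\abs{y_\beta}\le u$ for some $u\in\posE$ and all $\beta$ beyond some $\beta_0$ (such a $u$ is produced by a single member of a dominating net witnessing $y_\beta\oconv y$). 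This yields $\abs{(S_\alpha-S)y_\beta}\le\abs{S_\alpha-S}(u)$ for all $\alpha$ and all $\beta\ge\beta_0$, by positivity of $\abs{S_\alpha-S}$.

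The crux is now the modulus identity for orthomorphisms: since $S_\alpha-S\in\Orth(E)$, one has $\abs{S_\alpha-S}(u)=\bigabs{(S_\alpha-S)u}$ for the positive element $u$. As $S_\alpha\SOconv S$ gives $(S_\alpha-S)u\oconv 0$, it follows that $\abs{S_\alpha-S}(u)\oconv 0$ in $\alpha$. Combining the two index thresholds, namely the threshold $\beta_0$ from $\abs{y_\beta}\le u$ and a threshold in $\alpha$ coming from a dominating net for $\abs{S_\alpha-S}(u)$, shows that $(S_\alpha-S)y_\beta\oconv 0$ along the product net, whence $S_\alpha y_\beta\oconv Sy$, that is, $S_\alpha T_\beta x\oconv STx$. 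Since $x$ was arbitrary, $S_\alpha T_\beta\SOconv ST$. I expect this modulus identity to be the only nontrivial step: it is precisely what converts the strong order convergence of $S_\alpha-S$ (control only of $(S_\alpha-S)u$) into order convergence of the positive operators $\abs{S_\alpha-S}$ at the fixed dominating element $u$, which is exactly what is needed to absorb the $\beta$\nobreakdash-dependence of $y_\beta$. For a general net in $\obops(E)$ this implication fails, which is the structural reason the hypothesis $\opnet\subseteq\Orth(E)$ cannot be dropped.
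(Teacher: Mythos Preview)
Your proof is correct and follows essentially the same route as the paper's: the same splitting $S_\alpha T_\beta x-STx=(S_\alpha-S)T_\beta x+S(T_\beta x-Tx)$, the same reduction to an eventual bound $\abs{T_\beta x}\le u$, and the same decisive use of the orthomorphism modulus identity $\abs{S_\alpha-S}(u)=\bigabs{(S_\alpha-S)u}$ (the paper invokes \cite[Theorem~2.43]{aliprantis_burkinshaw_POSITIVE_OPERATORS_SPRINGER_REPRINT:2006} for this). Your remark on why the hypothesis $\opnet\subseteq\Orth(E)$ is essential is also to the point.
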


\begin{proof}
	\cref{res:orth_is_closed} shows that $S\in\Orth(E)$. Take $x\in E$. By passing to a tail, we may suppose that $\netgen{\abs{T_\beta x}}{\beta\in\istwo}$ is bounded above by some $y\in\pos{E}$. By applying \cite[Theorem~2.43]{aliprantis_burkinshaw_POSITIVE_OPERATORS_SPRINGER_REPRINT:2006} and the order continuity of $\abs{S}$ for the final convergence, we see that
	\begin{align*}
		\abs{S_\alpha T_\beta x-ST x}&\leq \abs{(S_\alpha-S)T_\beta x}+\abs{S(T_\beta-T)x}
		\\&\leq \abs{S_\alpha-S}\abs{T_\beta x}+\abs{S}\abs{(T_\beta-T)x}
		\\&\leq \abs{S_\alpha-S}y+\abs{S}\abs{T_\beta x-Tx}
		\\&=\abs{(S_\alpha-S)y}+\abs{S}\abs{T_\beta x-Tx}\oconv 0
	\end{align*}
	in $E$. Hence $S_\alpha T_\beta\SOconv ST$ in $\obops(E)$.
\end{proof}

The following is now clear from \cref{res:simultaneous_strong_order_continuity}.

\begin{corollary}\label{res:Soad_is_alg_VL}	Let $E$ be a Dedekind complete vector lattice. Suppose that $\opalg$ is a subalgebra of $\Orth(E)$. Then the adherence $\Soadh{\opalg}$ in $\obops(E)$ is also a subalgebra of $\Orth(E)$.
\end{corollary}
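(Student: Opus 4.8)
The plan is to prove the three defining properties of a subalgebra of $\Orth(E)$ directly: that $\Soadh{\opalg}$ is contained in $\Orth(E)$, that it is a linear subspace of $\obops(E)$, and that it is closed under multiplication. To that end I would fix $S,T\in\Soadh{\opalg}$ together with witnessing nets $\opnet$ and $\opnettwo$ in $\opalg$ satisfying $S_\alpha\SOconv S$ and $T_\beta\SOconv T$ in $\obops(E)$. The one technical device used throughout is to pass to the product directed set $\is\times\istwo$, ordered coordinatewise, so that these two nets, which need not share an index set, can be recombined into a single net lying in $\opalg$.

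The containment $\Soadh{\opalg}\subseteq\Orth(E)$ is immediate: every element of $\Soadh{\opalg}$ is a strong order limit of a net lying in $\opalg\subseteq\Orth(E)$, and $\Orth(E)$ is closed in $\obops(E)$ under strong order convergence by \cref{res:orth_is_closed} (equivalently, this is the first assertion of \cref{res:simultaneous_strong_order_continuity}). For the linear structure, take scalars $\lambda,\mu$ and consider the net $\netgen{\lambda S_\alpha+\mu T_\beta}{(\alpha,\beta)\in\is\times\istwo}$, which lies in $\opalg$ because $\opalg$ is a linear subspace. Evaluating at a fixed $x\in E$ and using that order convergence in $E$ is compatible with linear combinations along product nets, we get $\lambda S_\alpha x+\mu T_\beta x\oconv \lambda Sx+\mu Tx$, so that $\lambda S+\mu T\in\Soadh{\opalg}$; hence $\Soadh{\opalg}$ is a linear subspace.

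For multiplicative closure I would consider the net $\netgen{S_\alpha T_\beta}{(\alpha,\beta)\in\is\times\istwo}$, which again lies in $\opalg$ precisely because $\opalg$ is a \emph{subalgebra}. Since each $S_\alpha\in\opalg\subseteq\Orth(E)$ and each $T_\beta\in\obops(E)$, \cref{res:simultaneous_strong_order_continuity} applies and gives $S_\alpha T_\beta\SOconv ST$ in $\obops(E)$, whence $ST\in\Soadh{\opalg}$. Combining this with the two previous points completes the proof. The only genuine subtlety is the bookkeeping with the two index sets: the witnessing nets need not be indexed by the same directed set, so the product directed set is needed to realise $\lambda S_\alpha+\mu T_\beta$ and $S_\alpha T_\beta$ as single nets in $\opalg$. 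Once this is arranged, \cref{res:simultaneous_strong_order_continuity} carries out all of the analytic work, and the argument reduces to observing that the recombined nets still lie in $\opalg$, which is exactly what the subalgebra hypothesis provides.
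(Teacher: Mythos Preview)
Your proposal is correct and follows exactly the route the paper takes: the paper simply says the corollary ``is now clear from \cref{res:simultaneous_strong_order_continuity}'', and you have spelled out precisely how that proposition is applied (including the product-directed-set bookkeeping that the paper leaves implicit). There is nothing to add.
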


We now show that the condition in \cref{res:Soad_is_alg_VL} that $\opalg\subseteq\Orth(E)$ cannot be relaxed to $\opalg\subseteq \obops(E)$. At the time of writing, the authors do not know whether it might be relaxed to $\opalg\subseteq \ocops(E)$.

\begin{example}\label{rem:Soad_is_alg}
We resort to the context and notation of \cref{exam:order_is_al}. In that example, we had operators $T,S^\prime\in\oadh{\opalg}$ such that $TS^\prime\notin\Soadh{\opalg}$. Since $\oadh{\opalg}\subseteq\Soadh{\opalg}$, this example also provides an example as currently needed.
\end{example}

We turn to unbounded order adherences and strong unbounded order adherences.

\begin{proposition}\label{res:simultaneous_unbounded_order_continuity}
	\!Let $E$ be a Dedekind complete vector \hskip -0.5pt lattice.\! Suppose that $\opnet$ is a net in $\Orth(E)$ such that $S_\alpha\conv{\uo}S$ in $\obops(E)$ for some $S\in\obops(E)$ and that $\opnettwo\subseteq\Orth(E)$ is a net such that $T_\beta\uoconv T$ in $\obops(E)$ for some $T\in\obops(E)$. Then $S,T\in\Orth(E)$, and $S_\alpha T_\beta\uoconv ST$ in $\obops(E)$. Seven similar statements hold that are obtained by, for each of the three occurrences of unbounded order convergence, either keeping it or replacing it with strong unbounded order convergence.
\end{proposition}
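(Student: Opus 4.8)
The plan is to first collapse all eight assertions into a single statement about $\Orth(E)$, and then to prove that statement by reducing the simultaneous continuity to the single-variable results of \cref{res:RL_unbounded_order}, together with one genuinely new ingredient: that the product of two \uo-null nets in $\Orth(E)$ is again \uo-null.

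For the reduction, \cref{res:orth_is_closed} already gives $S,T\in\Orth(E)$ in every case, since $\Orth(E)$ is closed in $\obops(E)$ under both \uo- and \SUO-convergence. As $\Orth(E)$ is a subalgebra, the products $S_\alpha T_\beta$ and $ST$ again lie in $\Orth(E)$, so all six operators occurring in the statement belong to $\Orth(E)$. Now $\Orth(E)$ is a band, hence a regular vector sublattice, of $\obops(E)$, so by \cite[Theorem~3.2]{gao_troitsky_xanthos:2017} the \uo-convergence in $\obops(E)$ of a net lying in $\Orth(E)$ coincides with its \uo-convergence in $\Orth(E)$; and on $\Orth(E)$ the \uo- and \SUO-convergences coincide, as recorded in Table~\ref{table:convergences_orthomorphisms}. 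Consequently each of the three occurrences of unbounded order convergence in the statement\textemdash whether read as \uo\ or as \SUO, and whether computed in $\obops(E)$ or in $\Orth(E)$\textemdash expresses the same relation, and it suffices to prove the single assertion that $S_\alpha\uoconv S$ and $T_\beta\uoconv T$ in $\Orth(E)$ imply $S_\alpha T_\beta\uoconv ST$ in $\Orth(E)$.

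For this, write $S_\alpha T_\beta-ST=S(T_\beta-T)+(S_\alpha-S)T+(S_\alpha-S)(T_\beta-T)$. The first summand is \uo-null and the second is \uo-null by the \uo-\uo\ continuity of $\Le{S}$ and of $\Ri{T}$ established in the parts~\partref{res:RL_unbounded_order2} and~\partref{res:RL_unbounded_order1} of \cref{res:RL_unbounded_order}; since a sum of \uo-null nets is \uo-null, everything comes down to showing that the double net $(S_\alpha-S)(T_\beta-T)$, indexed by $\is\times\istwo$, is \uo-null. Setting $U_\alpha\coloneqq\abs{S_\alpha-S}$ and $V_\beta\coloneqq\abs{T_\beta-T}$ and using that $\Orth(E)$ is an $f$-algebra, so that $\abs{(S_\alpha-S)(T_\beta-T)}=U_\alpha V_\beta$, this is exactly the claim that the product of the two \uo-null nets $\netgen{U_\alpha}{\alpha\in\is}$ and $\netgen{V_\beta}{\beta\in\istwo}$ is \uo-null. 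I expect this product lemma to be the main obstacle, since the \uo-convergence of the two factors supplies no order boundedness that could be fed into the single-variable results.

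To prove the lemma, fix $u\in\pos{\Orth(E)}$ and $\lambda,\mu>0$, and split, using the identity $\Id$ of the $f$-algebra $\Orth(E)$,
\begin{equation*}
U_\alpha V_\beta=(U_\alpha\wedge\lambda\Id)V_\beta+(U_\alpha-\lambda\Id)^+(V_\beta\wedge\mu\Id)+(U_\alpha-\lambda\Id)^+(V_\beta-\mu\Id)^+ .
\end{equation*}
The first term is dominated by $\lambda V_\beta$ and the second by $\mu U_\alpha$, so their truncations by $u$ are dominated by $(\lambda V_\beta)\wedge u$ and $(\mu U_\alpha)\wedge u$, which are \o-null over $\beta$ and over $\alpha$ respectively. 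For the third term, let $B_\alpha$ be the band generated by $(U_\alpha-\lambda\Id)^+$; then the third term lies in $B_\alpha$, so its truncation by $u$ is dominated by the component $u_{B_\alpha}$ of $u$ in $B_\alpha$. The estimate $\lambda e_{B_\alpha}\le U_\alpha\wedge\lambda\Id$, where $e_{B_\alpha}$ is the component of $\Id$ in $B_\alpha$, together with $U_\alpha\wedge\lambda\Id\oconv 0$ gives $e_{B_\alpha}\oconv 0$; and since right multiplication by $u$ is order continuous on $\Orth(E)$ (being an orthomorphism of $\Orth(E)$ by \cref{res:orthisorth}\,\partref{res:orthisorth1}) and $u_{B_\alpha}=e_{B_\alpha}\,u$, we obtain $u_{B_\alpha}\oconv 0$ over $\alpha$. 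Combining the three bounds through the lattice inequality $(a+b+c)\wedge u\le a\wedge u+b\wedge u+c\wedge u$ yields
\begin{equation*}
U_\alpha V_\beta\wedge u\le (\lambda V_\beta)\wedge u+(\mu U_\alpha)\wedge u+u_{B_\alpha},
\end{equation*}
whose right-hand side is a sum of three \o-null nets over the product directed set $\is\times\istwo$. Hence $U_\alpha V_\beta\wedge u\oconv 0$; as $u\in\pos{\Orth(E)}$ was arbitrary, $U_\alpha V_\beta\uoconv 0$, which finishes the lemma and, with the reduction above, the proposition.
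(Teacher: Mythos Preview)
Your argument is correct and follows essentially the same route as the paper: reduce to the product of two \uo-null nets of orthomorphisms, control it via the band projection onto the part where one factor exceeds a multiple of $\Id$, and handle the cross terms by the single-variable results of \cref{res:RL_unbounded_order}; the eight statements are then identified via \cref{res:orth_is_closed} and the \uo/\SUO\ coincidence on $\Orth(E)$. The only differences are cosmetic: the parameters $\lambda,\mu$ can simply be taken equal to $1$, and your three-term split can be shortened to two terms (the paper bounds $\abs{S_\alpha T_\beta}\wedge \Id\le \abs{T_\beta}\wedge \Id+\mathcalalt P_\alpha \Id$ directly and then invokes that $\Id$ is a weak order unit), but none of this affects the validity of your proof.
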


\begin{proof}
	We start with the statement for three occurrences of unbounded order convergence. For this, we first suppose that $S=T=0$.
	
	For $\alpha\in\is$, let $\mathcalalt{P}_\alpha$ be the order projection in $\Orth(E)$ onto the band $B_\alpha$ in $\Orth(E)$ that is generated by $(\abs{S_\alpha}-\Id)^+$. Then $0\leq\mathcalalt{P}_\alpha I\leq \mathcalalt{P}_\alpha \abs{S_\alpha} \leq \abs{S_\alpha}$ by \cite[Lemma~6.9]{deng_de_jeu:2021}. Hence $\mathcalalt{P}_\alpha I\uoconv 0$ in $\obops(E)$, so that also $\mathcalalt{P}_\alpha I\uoconv 0$ in the regular vector sublattice $\Orth(E)$ of $\obops(E)$ by \cite[Theorem~3.2]{gao_troitsky_xanthos:2017}. Since the net $\netgen{\mathcalalt{P_\alpha}I}{\alpha\in\is}$ is order bounded in $\Orth(E)$, we see that
	\begin{equation}\label{eq:uoc1}
	\mathcalalt{P}_\alpha I\oconv 0
	\end{equation}
in $\Orth(E)$. 	Furthermore, since $(\mathcalalt{P}_\alpha \abs{S_\alpha})T_\beta \in B_\alpha$ for $\alpha\in\is,\beta\in\istwo$,  (see \cite[Theorem~2.62]{aliprantis_burkinshaw_POSITIVE_OPERATORS_SPRINGER_REPRINT:2006} or \cref{res:orthisorth}), we also have that $[(\mathcalalt{P}_\alpha \abs{S_\alpha})T_\beta]\wedge I \in B_\alpha$ for $\alpha\in\is,\beta\in\istwo$. Hence
\begin{equation}\label{eq:uoc2}
	[(\mathcalalt{P}_\alpha\abs{S_\alpha})T_\beta]\wedge \Id=\mathcalalt{P}_\alpha \big([(\mathcalalt{P}_\alpha\abs{S_\alpha})T_\beta]\wedge \Id \big)\leq \mathcalalt{P}_\alpha \Id
\end{equation}
for  $\alpha\in\is,\beta\in\istwo$.	

Combining the fact that $\abs{S_\alpha}\leq \Id + \mathcalalt{P}_\alpha \abs{S_\alpha}$ by  \cite[Proposition~6.10(2)]{deng_de_jeu:2021} with \cref{eq:uoc2}, we have, for $\alpha\in\is,\beta\in\istwo$,
\begin{align*}
	\abs{S_\alpha T_\beta}\wedge \Id &\leq (\abs{S_\alpha}\abs{T_\beta})\wedge \Id
	\\&\leq [(\Id + \mathcalalt{P}_\alpha \abs{S_\alpha})\abs{T_\beta}]\wedge \Id
	\\&\leq \abs{T_\beta}\wedge \Id+[(\mathcalalt{P}_\alpha \abs{S_\alpha})\abs{T_\beta}]\wedge \Id
	\\&\leq \abs{T_\beta}\wedge \Id+\mathcalalt{P}_\alpha \Id.
	\end{align*}
The fact that $T_\beta\uoconv 0$ in $\obops(E)$ and then also in $\Orth(E)$, together with \cref{eq:uoc1}, now shows that $\abs{S_\alpha T_\beta}\wedge \Id\oconv 0$ in $\Orth(E)$. Since $\Id$ is a weak order unit of $\Orth(E)$, \cite[Corollary~3.5]{gao_troitsky_xanthos:2017} (or the more general \cite[Proposition~7.4]{deng_de_jeu:2022}) implies that $S_\alpha T_\beta\uoconv 0$ in $\Orth(E)$ and then also in $\obops(E)$.

For the case of general $S$ and $T$, we first note that $S,T\in\Orth(E)$ as a consequence of \cref{res:orth_is_closed}. On writing
\[
S_\alpha T_\beta -TS = (S_\alpha-S)(T_\beta-T) + S_\alpha T + S T _\beta -2TS,
\]
the special case considered above, together with \cref{res:RL_unbounded_order}, then implies that $S_\alpha T_\beta\uoconv ST$ in $\obops(E)$, as desired.

The remaining seven statements follow from the case just established on invoking \cref{res:orth_is_closed}, \cite[Theorem~9.7]{deng_de_jeu:2021}, and \cite[Theorem~3.2]{gao_troitsky_xanthos:2017}.
\end{proof}

The following is now clear from \cref{res:simultaneous_unbounded_order_continuity}, \cite[Theorem~9.7]{deng_de_jeu:2021},  and the simultaneous unbounded order continuity of the lattice operations.

\begin{corollary}\label{res:uoad_is_alg}
	Let $E$ be a Dedekind complete vector lattice, and let $\opalg$ be a subalgebra of $\Orth(E)$. Then the adherences $\uoadh{\opalg}$ and $\SUOadh{\opalg}$ in $\obops(E)$ are equal, and are subalgebras of $\Orth(E)$. When $\opalg$ is a vector lattice subalgebra of $\Orth(E)$, then so is $\uoadh{\opalg}=\SUOadh{\opalg}$.
\end{corollary}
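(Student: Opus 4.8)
The plan is to verify the three assertions in sequence --- the equality $\uoadh{\opalg}=\SUOadh{\opalg}$, then that this common set is a subalgebra of $\Orth(E)$, and finally the vector lattice statement --- invoking \cref{res:simultaneous_unbounded_order_continuity} for the multiplicative part and \cite[Theorem~9.9]{deng_de_jeu_UNPUBLISHED:2020b} for the comparison of the two convergences. The one structural fact I would use repeatedly is that $\Orth(E)$ is a band, hence an ideal, hence a regular vector sublattice of $\obops(E)$, so that by \cite[Theorem~3.2]{gao_troitsky_xanthos:2017} unbounded order convergence of a net lying in $\Orth(E)$ is the same whether computed in $\Orth(E)$ or in $\obops(E)$.

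I would first establish the equality of adherences. By \cref{res:orth_is_closed}, any net in $\opalg\subseteq\Orth(E)$ that is uo- or SUO-convergent in $\obops(E)$ has its limit in $\Orth(E)$, so both $\uoadh{\opalg}$ and $\SUOadh{\opalg}$ are contained in $\Orth(E)$. Now if $x\in\uoadh{\opalg}$, witnessed by a net $\opnet$ in $\opalg$ with $S_\alpha\uoconv x$ in $\obops(E)$, then $x\in\Orth(E)$ and, by regularity, $S_\alpha\uoconv x$ in $\Orth(E)$; applying \cite[Theorem~9.9]{deng_de_jeu_UNPUBLISHED:2020b} (coincidence of uo and SUO convergence on $\Orth(E)$) gives $S_\alpha\SUOconv x$, whence $x\in\SUOadh{\opalg}$. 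The reverse inclusion is symmetric: starting from $S_\alpha\SUOconv x$ one again gets $x\in\Orth(E)$, then $S_\alpha\uoconv x$ in $\Orth(E)$ by Theorem~9.9, and then $S_\alpha\uoconv x$ in $\obops(E)$ by regularity. This yields $\uoadh{\opalg}=\SUOadh{\opalg}$, a set contained in $\Orth(E)$.

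For the subalgebra property it remains to show that this common set is a linear subspace closed under multiplication. Linearity follows from the simultaneous unbounded order continuity of addition and scalar multiplication: if $x,y\in\uoadh{\opalg}$ are the uo-limits of nets $\opnet$ and $\opnettwo$ in $\opalg$, then along the product directed set $\netgen{S_\alpha+T_\beta}{(\alpha,\beta)\in\is\times\istwo}$ lies in $\opalg$ and is uo-convergent to $x+y$, while $\netgen{\lambda S_\alpha}{\alpha\in\is}$ lies in $\opalg$ and is uo-convergent to $\lambda x$. Closure under multiplication is the substantive point, but it is handed to us by \cref{res:simultaneous_unbounded_order_continuity}: feeding the two nets $\opnet$ and $\opnettwo$ --- both of which lie in $\Orth(E)$ --- into that proposition gives $S_\alpha T_\beta\uoconv xy$ in $\obops(E)$ along the product directed set, and $S_\alpha T_\beta\in\opalg$ since $\opalg$ is a subalgebra. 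Hence $xy\in\uoadh{\opalg}$.

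For the final statement, suppose $\opalg$ is a vector lattice subalgebra. I would add closure under the lattice operations by the single-variable uo continuity of the modulus: from the inequality $\bigl\lvert\lvert S_\alpha\rvert-\lvert x\rvert\bigr\rvert\wedge z\leq\lvert S_\alpha-x\rvert\wedge z$, whose right-hand side is order null for every $z\in\pos{\obops(E)}$ when $S_\alpha\uoconv x$, one reads off $\lvert S_\alpha\rvert\uoconv\lvert x\rvert$; since $\lvert S_\alpha\rvert\in\opalg$, this gives $\lvert x\rvert\in\uoadh{\opalg}$, so the adherence is a vector sublattice and therefore a vector lattice subalgebra. I do not expect a genuine obstacle here, since the two hard inputs --- \cref{res:simultaneous_unbounded_order_continuity} and Theorem~9.9 --- may be assumed; the only real care needed is the bookkeeping around the product directed sets and the repeated transfer of uo convergence between $\Orth(E)$ and $\obops(E)$ via regularity, which is precisely what makes the equality of the two adherences the most delicate of the three claims.
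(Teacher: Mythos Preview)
Your proof is correct and follows essentially the same approach as the paper, which simply records that the corollary is clear from \cref{res:simultaneous_unbounded_order_continuity}, \cite[Theorem~9.9]{deng_de_jeu_UNPUBLISHED:2020b}, and the simultaneous unbounded order continuity of the lattice operations. You have spelled out in detail the bookkeeping around regularity and \cref{res:orth_is_closed} that the paper leaves implicit (these facts are already absorbed into the proof of \cref{res:simultaneous_unbounded_order_continuity} and its seven variants), but the ingredients and structure are the same.
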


We now show that, neither for $\uoadh{\opalg}$ to be a subalgebra of $\obops(E)$, nor for $\SUOadh{\opalg}$ to be a subalgebra of $\obops(E)$, the condition in \cref{res:uoad_is_alg} that $\opalg\subseteq\Orth(E)$ can be relaxed to $\opalg\subseteq\ocops(E)$.

\begin{example}\label{exam:uo_not_algebra}
Let $E=\ell_1$, and let $\seq{e_n}{n}$ be the standard sequence of unit vectors in $E$. For $i,j\geq 1$, we define $S_{i,j}\in \ocops(E)=\obops(E)$ by setting
\[
S_{i,j}e_n\coloneqq
\begin{cases}e_j&\text{ if } n=i;\\
	0&\text{ if } n\neq i
	\end{cases}
\]
for $n\geq 1$, and we define $T\in \ocops (E)$ by setting
\[
Tx\coloneqq \left(\sum_{i=2}^{\infty}x_i\right) e_3
\]
for $x=\bigvee_{i=1}^{\infty}x_i e_i \in E$. Set $S_n\coloneqq S_{1,2}-S_{1,n+3}$ for $n\geq 1$. It is not hard to check that $T^2=T$, that $S_nT=TS_n=0$ for $n\geq 1$, and that $S_mS_n=0$ for $m,n\geq 1$. Hence $\opalg\coloneqq\Span \{T,\,S_n:n\geq 1\}$ is a subalgebra of $\ocops(E)$.

Using \cite[Theorem~1.51]{aliprantis_burkinshaw_POSITIVE_OPERATORS_SPRINGER_REPRINT:2006}, it is easy to see that $\seq{S_{1,n+3}}{n}$ is a disjoint sequence in $\obops(E)$, so that $S_{1,n+3}\uoconv 0$ in $\obops(E)$ by \cite[Corollary 3.6]{gao_troitsky_xanthos:2017}. Hence $S_n\uoconv S_{1,2}$ in $\obops(E)$, showing that $S_{1,2}\in\uoadh{\opalg}$. Obviously, $T\in\uoadh{\opalg}$. We claim that, however,  $TS_{1,2}$ is not even an element of $\overline{\opalg}^{\uoLt_{\obops(E)}}\supseteq\uoadh{\opalg}$. In order to see this, we observe that $TS_{1,2}=S_{1,3}$ and, using \cite[Theorem~1.51]{aliprantis_burkinshaw_POSITIVE_OPERATORS_SPRINGER_REPRINT:2006}, that $S_{1,3}\perp T$ and $S_{1,3}\perp S_n$ for $n\geq 1$. Hence $TS_{1,2}\perp\opalg$, which implies that $TS_{1,2}\notin\overline{\opalg}^{\uoLt_{\Orth(E)}}$.

For $x=\bigvee_{i=1}^\infty x_i e_i\in\ell_1$, we have $S_{1,n+3}x=x_1e_{n+3}$ for $n\geq 1$. This implies that $S_{1,n+3}\SUOconv 0$ in $\obops(E)$, showing that $S_n\SUOconv S_{1,2}$ in $\obops(E)$. Hence $S_{1,2}\in\SUOadh{\opalg}$. Obviously, $T\in\SUOadh{\opalg}$. We claim that, however,  $TS_{1,2}$ is not even an element of $\adh{\SuoLt_E}{\opalg}\supseteq\SUOadh{\opalg}$. In order to see this, it is sufficient to observe that $TS_{1,2}e_1=e_3\neq 0$ and that $TS_{1,2}e_1\perp Re_1$ for all $R\in\opalg$. This implies that there cannot exist a net $\netgen{R_\alpha}{\alpha\in\is}\subseteq\opalg$ such that $R_\alpha e_1\conv{\uoLt_E} TS_{1,2}e_1$ in $E$, let alone such that $R_\alpha\conv{\SuoLt_E} TS_{1,2}$ in $\obops(E)$.
\end{example}

We turn to closures in a Hausdorff \uoLtop\ and strong closures with respect to a Hausdorff \uoLtop.  We recall once more from \cref{res:all_three_or_none} that either all of $E$, $\Orth(E)$, and $\obops(E)$ admit a Hausdorff \uoLtop, or none does. If they do, then, by general principles (see \cite[Proposition~5.12]{taylor:2019}), $\uoLt_{\Orth(E)}$ is the restriction of $\uoLt_{\obops(E)}$ to $\Orth(E)$.

\begin{proposition}\label{res:simultaneous_uo-Lebesgue_continuity}
Let $E$ be a Dedekind complete vector lattice that admits a \necun\ Hausdorff \uoLtop\ $\uoLt_E$. Suppose that $\opnet\subseteq\Orth(E)$ is a net such that $S_\alpha\conv{\uoLt_{\obops(E)}} S$ in $\obops(E)$ for some $S\in\obops(E)$ and that $\opnettwo\subseteq\Orth(E)$ is a net such that $T_\beta\conv{\uoLt_{\obops(E)}} T$ in $\obops(E)$ for some $T\in\obops(E)$. Then $S,T\in\Orth(E)$, and $S_\alpha T_\beta\conv{\uoLt_{\obops(E)}} ST$ in $\obops(E)$.  Seven similar statements hold that are obtained by, for each of the three occurrences of $\uoLt_{\obops(E)}$ convergence, either keeping it or replacing it with strong $\uoLt_{E}$ convergence.
\end{proposition}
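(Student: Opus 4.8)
The plan is to mirror the proof of \cref{res:simultaneous_unbounded_order_continuity}, replacing unbounded order convergence by convergence in the \uoLtop\ throughout, and to exploit that $\uoLt_{\obops(E)}$, being a locally solid linear topology, propagates convergence through domination, sums, and the lattice operations. Throughout I would use that, by \cref{res:all_three_or_none}, $\Orth(E)$ indeed admits a Hausdorff \uoLtop, that $\uoLt_{\Orth(E)}$ is the restriction of $\uoLt_{\obops(E)}$ to the band $\Orth(E)$, and that $\Orth(E)$ is an algebra, so that $S_\alpha T_\beta\in\Orth(E)$ for all $\alpha,\beta$. Consequently it suffices to prove every convergence inside $\Orth(E)$ with respect to $\uoLt_{\Orth(E)}$.

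First I would record that $S,T\in\Orth(E)$ by \cref{res:orth_is_closed}. As in the proof of \cref{res:simultaneous_unbounded_order_continuity}, I would then reduce to the case $S=T=0$ by writing
\[
S_\alpha T_\beta-ST=(S_\alpha-S)(T_\beta-T)+(S_\alpha-S)T+S(T_\beta-T),
\]
noting that $(S_\alpha-S)T=\Ri{T}(S_\alpha-S)$ and $S(T_\beta-T)=\Le{S}(T_\beta-T)$ are $\uoLt_{\Orth(E)}$-null by the parts~\partref{res:RL_uoLt1} and~\partref{res:RL_uoLt2} of \cref{res:RL_uoLt}, while the first term is covered by the case $S=T=0$; linearity of the topology then assembles the three pieces. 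For $S=T=0$, the key observation is that the purely \emph{order-theoretic} inequality $\abs{S_\alpha T_\beta}\wedge \Id\leq \abs{T_\beta}\wedge \Id+\mathcalalt{P}_\alpha \Id$ derived in the proof of \cref{res:simultaneous_unbounded_order_continuity}, with $\mathcalalt{P}_\alpha$ the order projection in $\Orth(E)$ onto the band generated by $(\abs{S_\alpha}-\Id)^+$, is available verbatim, since its derivation uses only lattice operations. From $S_\alpha\conv{\uoLt_{\Orth(E)}} 0$ and $T_\beta\conv{\uoLt_{\Orth(E)}} 0$ I get $\abs{S_\alpha}\conv{\uoLt_{\Orth(E)}} 0$ and $\abs{T_\beta}\conv{\uoLt_{\Orth(E)}} 0$; the bounds $0\leq\mathcalalt{P}_\alpha\Id\leq\abs{S_\alpha}$ and $0\leq\abs{T_\beta}\wedge\Id\leq\abs{T_\beta}$ together with solidity of the topology then give $\mathcalalt{P}_\alpha\Id\conv{\uoLt_{\Orth(E)}} 0$ and $\abs{T_\beta}\wedge\Id\conv{\uoLt_{\Orth(E)}} 0$. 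Feeding this into the displayed inequality and using solidity once more yields $\abs{S_\alpha T_\beta}\wedge\Id\conv{\uoLt_{\Orth(E)}} 0$.

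The hard part will be the final passage from $\abs{S_\alpha T_\beta}\wedge \Id\conv{\uoLt_{\Orth(E)}} 0$ to $S_\alpha T_\beta\conv{\uoLt_{\Orth(E)}} 0$: this is not a solidity statement but genuinely relies on the unbounded character of the topology. Here I would invoke that $\Id$ is a weak order unit of $\Orth(E)$, together with the characterisation of the \uoLtop\ by testing against a weak order unit (equivalently, the fact that the \uoLtop\ agrees with its underlying Lebesgue topology on order bounded sets). This is the topological analogue of \cite[Corollary~3.5]{gao_troitsky_xanthos:2017} used in \cref{res:simultaneous_unbounded_order_continuity}, and the pertinent statement is available from \cite{taylor:2019} and \cite{deng_de_jeu_UNPUBLISHED:2020b}. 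This closes the case $S=T=0$ and hence the first assertion.

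Finally, the seven strong variants require no further work. Since all the nets and their limits lie in $\Orth(E)$, and on $\Orth(E)$ a possible $\uoLt_{\Orth(E)}$ convergence and strong $\uoLt_E$ convergence of nets coincide (see \cite[Theorem~9.12]{deng_de_jeu_UNPUBLISHED:2020b} and the note to Table~\ref{table:convergences_orthomorphisms}), each of the three occurrences may be switched freely between $\uoLt_{\obops(E)}$ convergence and strong $\uoLt_E$ convergence; hence all eight statements coincide with the one already proved.
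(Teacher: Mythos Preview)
Your proposal is correct and follows essentially the same route as the paper's proof: reduce to $S=T=0$ via the one-variable continuity of \cref{res:RL_uoLt}, reuse the order-theoretic inequalities $0\leq\mathcalalt{P}_\alpha\Id\leq\abs{S_\alpha}$ and $\abs{S_\alpha T_\beta}\wedge\Id\leq\abs{T_\beta}\wedge\Id+\mathcalalt{P}_\alpha\Id$ from the proof of \cref{res:simultaneous_unbounded_order_continuity}, and then pass from $\abs{S_\alpha T_\beta}\wedge\Id\conv{\uoLt_{\Orth(E)}}0$ to $S_\alpha T_\beta\conv{\uoLt_{\Orth(E)}}0$ using that $\Id$ generates a $\uoLt_{\Orth(E)}$-dense ideal together with the fact that $\uoLt_{\Orth(E)}$ is an unbounded topology. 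The paper invokes \cite[Corollary~3.5]{kandic_taylor:2018} for this last step, which is the precise form of the ``testing against a weak order unit'' principle you describe; the seven variants are handled exactly as you say via \cite[Theorem~9.12]{deng_de_jeu_UNPUBLISHED:2020b}.
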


\begin{proof}
	We start with the statement for three occurrences of $\uoLt_{\obops(E)}$ convergence. For this, we first suppose that $S=T=0$.
	
	We can use parts of the proof of \cref{res:simultaneous_unbounded_order_continuity} here. In that proof, it was established that, for $\alpha\in\is$, there exists a band projection $\mathcalalt{P}_\alpha$ in $\Orth(E)$ such that
	\begin{equation}\label{eq:simultaneous_uo-Lebesgue_continuity_1}
	0\leq \mathcalalt{P}_\alpha I\leq \abs{S_\alpha}
	\end{equation}
	and such that
	\begin{equation}\label{eq:simultaneous_uo-Lebesgue_continuity_2}
	\abs{S_\alpha T_\beta}\wedge I\leq \abs{T_\beta}\wedge I +\mathcalalt{P}_\alpha I
	\end{equation}
	for $\beta\in\istwo$.  It follows from \cref{eq:simultaneous_uo-Lebesgue_continuity_1} that also
	\begin{equation*}
			\mathcalalt{P}_\alpha I\conv{\uoLt_{\obops(E)}}0
		\end{equation*}
in $\obops(E)$, and then \cref{eq:simultaneous_uo-Lebesgue_continuity_2} shows that $\abs{S_\alpha T_\beta}\wedge I\conv{\uoLt_{\obops(E)}}0$ in $\obops(E)$, so that also
	\begin{equation}\label{eq:simultaneous_uo-Lebesgue_continuity_3}
\abs{S_\alpha T_\beta}\wedge I\conv{\uoLt_{\Orth(E)}}0
	\end{equation}
 in $\Orth(E)$. It follows from \cite[Theorems~1.38 and~2.45 ]{aliprantis_burkinshaw_POSITIVE_OPERATORS_SPRINGER_REPRINT:2006} that, in the vector lattice $\Orth(E)$, the order adherence of the ideal that is generated by $I$ is the entire space $\Orth(E)$. Hence this ideal is certainly $\uoLt_{\Orth(E)}$- dense in $\Orth(E)$. Since  $\uoLt_{\Orth(E)}$ is an unbounded topology on $\Orth(E)$, it now follows from \cref{eq:simultaneous_uo-Lebesgue_continuity_3} and \cite[Corollary~3.5]{kandic_taylor:2018} that  $S_\alpha T_\beta\conv{\uoLt_{\Orth(E)}}0$ in $\Orth(E)$, and then also $S_\alpha T_\beta\conv{\uoLt_{\obops(E)}}0$ in $\obops(E)$.

	For the case of general $S$ and $T$, we first note that $S,T\in\Orth(E)$ as a consequence of \cref{res:orth_is_closed}. On writing
	\[
	S_\alpha T_\beta -TS = (S_\alpha-S)(T_\beta-T) + S_\alpha T + S T _\beta -2TS,
	\]
	the special case considered above, together with \cref{res:RL_uoLt}, then implies that $S_\alpha T_\beta\conv{\uoLt_{\obops(E)}} ST$ in $\obops(E)$, as desired.
	
	The remaining seven statements follow from the case just established on invoking \cref{res:orth_is_closed} and \cite[Theorem~9.10]{deng_de_jeu:2021}.
\end{proof}

The following is now clear from \cref{res:simultaneous_uo-Lebesgue_continuity} and the simultaneous continuity of the lattice operations with respect to the $\uoLt_{\obops(E)}$ topology.

\begin{corollary}\label{res:uoLt_is_alg}
	Let $E$ be a Dedekind complete vector lattice that admits a \necun\ Hausdorff \uoLtop\  $\uoLt_E$. Suppose that $\opalg$ is a subalgebra of $\Orth(E)$. Then the closure $\overline{\opalg}^{\uoLt_{\obops(E)}}$ in $\obops(E)$ and the adherence $\adh{\SuoLt_E}{\opalg}$ in $\obops(E)$ are equal, and are subalgebras of $\Orth(E)$. When $\opalg$ is a vector lattice subalgebra of $\Orth(E)$, then so is $\overline{\opalg}^{\uoLt_{\obops(E)}}=\adh{\SuoLt_E}{\opalg}$.
\end{corollary}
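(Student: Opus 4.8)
The plan is to derive the statement directly from \cref{res:simultaneous_uo-Lebesgue_continuity}, together with \cref{res:orth_is_closed} and the continuity of the linear and lattice operations in the locally solid topology $\uoLt_{\obops(E)}$. By \cref{res:all_three_or_none}, the hypothesis that $E$ admits a Hausdorff \uoLtop\ guarantees that $\obops(E)$ and $\Orth(E)$ do as well, so all the topologies $\uoLt_{\obops(E)}$ and $\uoLt_{\Orth(E)}$ that occur are defined. The substantive work has already been carried out in establishing the simultaneous continuity, so what remains is essentially bookkeeping about where limits live and which convergence structures coincide.

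First I would record that both $\overline{\opalg}^{\uoLt_{\obops(E)}}$ and $\adh{\SuoLt_E}{\opalg}$ are contained in $\Orth(E)$: every element of either set is a $\uoLt_{\obops(E)}$-limit, resp.\ a strong $\uoLt_E$-limit, in $\obops(E)$ of a net in $\opalg\subseteq\Orth(E)$, and by \cref{res:orth_is_closed} the set $\Orth(E)$ is closed in $\obops(E)$ under both of these convergences. To see that the two adherences coincide, recall that $\uoLt_{\Orth(E)}$ is the restriction of $\uoLt_{\obops(E)}$ to $\Orth(E)$. Hence, for a net $\opnet$ in $\opalg$ and an $S\in\Orth(E)$, the convergence $S_\alpha\conv{\uoLt_{\obops(E)}} S$ in $\obops(E)$ is equivalent to $S_\alpha\conv{\uoLt_{\Orth(E)}} S$ in $\Orth(E)$, and this in turn is equivalent to $S_\alpha\conv{\SuoLt_E} S$ by the coincidence of $\uoLt_{\Orth(E)}$-convergence and strong $\uoLt_E$-convergence on $\Orth(E)$ (the note accompanying \cref{table:convergences_orthomorphisms}, i.e.\ \cite[Theorem~9.12]{deng_de_jeu_UNPUBLISHED:2020b}). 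Since the admissible limits for both adherences lie in $\Orth(E)$, this chain of equivalences yields $\overline{\opalg}^{\uoLt_{\obops(E)}}=\adh{\SuoLt_E}{\opalg}$.

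It then remains to show that this common set is a subalgebra of $\Orth(E)$. Linearity is immediate, since $\uoLt_{\obops(E)}$ is a linear topology and the $\uoLt_{\obops(E)}$-closure of the linear subspace $\opalg$ is again a linear subspace. For closure under multiplication I would take $S,T\in\overline{\opalg}^{\uoLt_{\obops(E)}}$, choose nets $\opnet$ and $\opnettwo$ in $\opalg$ with $S_\alpha\conv{\uoLt_{\obops(E)}} S$ and $T_\beta\conv{\uoLt_{\obops(E)}} T$, and apply \cref{res:simultaneous_uo-Lebesgue_continuity}: since $\opalg\subseteq\Orth(E)$ and $S,T\in\Orth(E)$, its hypotheses are met, giving $S_\alpha T_\beta\conv{\uoLt_{\obops(E)}} ST$ in $\obops(E)$ along the product directed set $\is\times\istwo$. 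As $\opalg$ is a subalgebra, $S_\alpha T_\beta\in\opalg$, so this net witnesses $ST\in\overline{\opalg}^{\uoLt_{\obops(E)}}$. When $\opalg$ is moreover a vector lattice subalgebra, closure under $\vee$ and $\wedge$ follows from the continuity of the lattice operations in the locally solid topology $\uoLt_{\obops(E)}$, so the adherence is then a vector lattice subalgebra of $\Orth(E)$. I expect no real obstacle in the corollary itself\textemdash the genuine difficulty sits in \cref{res:simultaneous_uo-Lebesgue_continuity}\textemdash and the only point needing care here is the identification of $\uoLt_{\obops(E)}$-convergence of nets from $\opalg$ with $\uoLt_{\Orth(E)}$-convergence, which is precisely what licenses both the equality of the two adherences and the application of the simultaneous-continuity result.
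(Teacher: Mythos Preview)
Your proposal is correct and follows essentially the same route as the paper: the paper's one-line proof points to \cref{res:simultaneous_uo-Lebesgue_continuity} and the simultaneous continuity of the lattice operations in $\uoLt_{\obops(E)}$, and you have simply unpacked those references\textemdash invoking \cref{res:orth_is_closed} and the coincidence of $\uoLt_{\Orth(E)}$ with $\STRONG\uoLt_E$ convergence (which the paper uses implicitly via the proof of \cref{res:simultaneous_uo-Lebesgue_continuity}) to obtain the equality of the two adherences, and then \cref{res:simultaneous_uo-Lebesgue_continuity} itself for closure under products. There is no substantive difference in the argument.
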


We now show that, neither for $\overline{\opalg}^{\uoLt_{\obops(E)}}$ to be a subalgebra of $\obops(E)$, nor for $\adh{\SuoLt_E}{\opalg}$ to be a subalgebra of $\obops(E)$, the condition in \cref{res:uoLt_is_alg} that $\opalg\subseteq\Orth(E)$ can be relaxed to $\opalg\subseteq\ocops(E)$.

\begin{example}\label{exam:uoLt_not_algebra}
	We return to the context and notation of \cref{exam:uo_not_algebra}. In that example, we saw that $S_n\uoconv S_{1,2}$ in $\obops(E)$. Then certainly $S_n\conv{\uoLt_{\obops}} S_{1,2}$ in $\obops(E)$, so that both $T$ and $S_{1,2}$ are elements of $\overline{\opalg}^{\uoLt_{\obops(E)}}$. We saw in  \cref{exam:uo_not_algebra}, however, that $TS_{1,2}\notin\overline{\opalg}^{\uoLt_{\obops(E)}}$.
	
	It was also observed that $S_n\SUOconv S_{1,2}$ in $\obops(E)$. Since $E$ is atomic, the unbounded order convergence of a net in $E$ and its convergence in the Hausdorff \uoLtop\ on $E$ are known to coincide (see \cite[Lemma~3.1]{dabboorasad_emelyanov_marabeh:2020} and \cite[Lemma~7.4]{taylor:2019}). Thus also $S_n\conv{\SuoLt_{E}}S_{1,2}$, so that both $T$ and $S_{1,2}$ are elements of $\adh{\SuoLt_E}{\opalg}$. We saw in  \cref{exam:uo_not_algebra}, however, that $TS_{1,2}\notin\adh{\SuoLt_E}{\opalg}$.
	
\end{example}

\section{Equality of adherences of vector sublattices}\label{sec:equality_of_adherences}

\noindent
In this section, we establish the equality of various adherences of vector sublattices with respect to convergence structures under consideration in this paper. We pay special attention to vector sublattices of the orthomorphisms on a Dedekind complete vector lattice. Apart from the intrinsic interest of the results, our research in this direction is also motivated by representation theory. We shall now explain this.

Suppose that $E$ is a vector lattice, and that $\opset$ is a non-empty set of order bounded linear operators on $E$. Typical examples to keep in mind are those where $\opset$ is a group of order automorphisms of $E$ (this arises naturally when considering positive representations of groups on vector lattices), or where $\opset$ is a (vector lattice) algebra of order bounded linear operators (this arises naturally when considering positive representations of (vector lattice) algebras on vector lattices). One of the main issues in general representation theory is to investigate the possible decompositions of a module into submodules. In our case, this is asking for decompositions $E=F_1\oplus F_2$ as an order direct sum of vector sublattices $F_1$ and $F_2$ that are both invariant under $\opset$. It is well known (see \cite[Theorem~11.3]{zaanen_INTRODUCTION_TO_OPERATOR_THEORY_IN_RIESZ_SPACES:1997} for an even stronger result) that $F_1$ and $F_2$ are then projection bands that are each other's disjoint complements. Their respective order projections then commute with all elements of $\opset$. Conversely, when an order projection has this property, then $E$ is the order direct sum of its range and its kernel, and both are invariant under $\opset$. All in all, the decomposition question for the action of $\opset$ on $E$ is the same as asking for the order projections on $E$ that commute with $\opset$. This makes it natural to ask for the commutant of $\opset$ in $\Orth(E)$, where these order projections reside. This commutant is obviously an associative subalgebra of $\Orth(E)$. Somewhat surprisingly, it is quite often also a vector sublattice of $\Orth(E)$. For example, this is always true for Banach lattices, in which case the operators in $\opset$ need not even be regular. Being bounded is enough, as is shown by the following result, for which the Banach lattice need not even be Dedekind complete.

\begin{theorem}\label{res:commutant_of_bounded_operators_is_lattice}
	Let $E$ be a Banach lattice, and let $\opset$ be a non-empty set of bounded linear operators on $E$. Then the commutant
	\[
	\orthcom{\opset}\coloneqq \{T\in\Orth(E) : TS=ST\text{ for all }S\in\opset\}
	\]
	of $\opset$ in $\Orth(E)$ is a Banach \fsubalg\ of $\Orth(E)$ that contains the identity operator $\Id$ as a strong order unit; here $\Orth(E)$ is supplied with the coinciding operator norm and order unit norm $\norm{\,\cdot\,}_I$.
\end{theorem}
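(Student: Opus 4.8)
The plan is to verify the assertions in the order: (i) $\orthcom{\opset}$ is a unital associative subalgebra of $\Orth(E)$; (ii) it is norm-closed, hence complete; (iii) it is a vector sublattice of $\Orth(E)$, which is the crux; and then (iv) to read off that $\Id$ is a strong order unit and that the two norms coincide on $\orthcom{\opset}$ by restriction from $\Orth(E)$.

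Step (i) is routine. If $T_1,T_2\in\Orth(E)$ commute with every $S\in\opset$, then so do $\lambda T_1+T_2$ and the composition $T_1T_2$, since $(T_1T_2)S=T_1(ST_2)=\dotsb=S(T_1T_2)$; and $\Id S=S\Id$ gives $\Id\in\orthcom{\opset}$. Step (ii) is equally quick: for each fixed $S\in\opset$ the maps $T\mapsto TS$ and $T\mapsto ST$ are operator-norm continuous on the bounded operators $\mathcal{B}(E)$ (they are $\norm{S}$-Lipschitz), so $\{\,T\in\Orth(E):TS=ST\,\}$ is operator-norm closed in $\Orth(E)$; intersecting over $S\in\opset$ keeps $\orthcom{\opset}$ closed. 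As $\Orth(E)$ is norm complete (being an AM-space with unit), a closed subspace is again a Banach space.

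The main obstacle is step (iii): showing that $\orthcom{\opset}$ is closed under the modulus of $\Orth(E)$. Here I would use that, for a Banach lattice $E$, $\Orth(E)$ is an AM-space with strong order unit $\Id$ whose operator norm coincides with the order unit norm $\norm{\,\cdot\,}_I$, and that as a uniformly complete Archimedean unital $f\!$-algebra it is, via the Kakutani--Yosida representation, lattice- and algebra-isomorphic to some $\Cee(K)$ in which composition of orthomorphisms becomes pointwise multiplication and the lattice modulus becomes the pointwise modulus. Fix $T\in\orthcom{\opset}$. On $[-\norm{T},\norm{T}]$ the function $t\mapsto\abs{t}$ is a uniform limit of polynomials $p_n$ by the Weierstrass theorem, so under the isomorphism $p_n(T)\to\abs{T}$ in operator norm. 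Each $p_n(T)$ is a polynomial in $T$ with a constant term $c_n\Id$, and since $T^kS=ST^k$ for all $k\geq 0$ (by induction from $TS=ST$) while $c_n\Id$ commutes with $S$, we get $p_n(T)S=Sp_n(T)$ in $\mathcal{B}(E)$. Passing to the operator-norm limit and using the continuity of $R\mapsto RS$ and $R\mapsto SR$ yields $\abs{T}S=S\abs{T}$, that is, $\abs{T}\in\orthcom{\opset}$. Being a linear subspace closed under the modulus, $\orthcom{\opset}$ is a vector sublattice of $\Orth(E)$.

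Combining (i) and (iii), $\orthcom{\opset}$ is simultaneously a subalgebra and a vector sublattice of the $f\!$-algebra $\Orth(E)$, hence an $f\!$-subalgebra; together with (ii) this makes it a Banach $f\!$-subalgebra. Finally, $\Id\in\orthcom{\opset}$, and for every $T\in\orthcom{\opset}$ we have $\abs{T}\in\orthcom{\opset}$ with $\abs{T}\leq\norm{T}\Id$ because $\Id$ is a strong order unit of $\Orth(E)$ whose order unit norm equals the operator norm; hence $\Id$ remains a strong order unit of $\orthcom{\opset}$ and the order unit norm of $\orthcom{\opset}$ is the restriction of that of $\Orth(E)$, which is the operator norm. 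I expect the only genuine difficulty to reside in justifying the functional-calculus/approximation argument of step (iii); the remaining assertions are essentially bookkeeping.
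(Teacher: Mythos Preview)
Your proof is correct and follows essentially the same route as the paper. The paper disposes of steps (i), (ii), and (iv) as ``obvious'' and then invokes \cite[Theorem~6.1]{deng_de_jeu_UNPUBLISHED:2020b} from the companion paper for step (iii); that cited result says precisely that a norm-closed associative subalgebra of a Banach $f\!$-algebra of this type is automatically an $f\!$-subalgebra, and your Weierstrass-approximation argument in the $\Cee(K)$ model is exactly the standard proof of that fact, written out in full.
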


\begin{proof}
It is obvious that $\orthcom{\opset}$ is an associative subalgebra of $\Orth(E)$ that contains $\Id$ and that is closed with respect to the coinciding operator norm and order unit norm $\norm{\,\cdot\,}_I$. An appeal to \cite[Theorem~6.1]{deng_de_jeu:2021} then finishes the proof.
\end{proof}

For Dedekind complete vector lattices, we have the following.

\begin{theorem}\label{res:commutant_of_order_continuous_operators_is_lattice}
	Let $E$ be a Dedekind complete vector lattice, let $\opset$ be a non-empty subset of $\obops(E)$, and let $B_{\opset}$ be the band in $\obops(E)$ that is generated by $\opset$. Then the commutant
	\begin{align*}
	&\orthcom{\opset}\coloneqq \{T\in\Orth(E) : TS=ST\text{ for all }S\in\opset\}
	\intertext{of $\opset$ in $\Orth(E)$ and the commutant}
	&\orthcom{(B_\opset)}\coloneqq \{T\in\Orth(E) : TS=ST\text{ for all }S\in B_\opset\}
	\end{align*}
	of $B_\opset$ in $\Orth(E)$ are equal. This common commutant in $\Orth(E)$ is a vector lattice subalgebra of $\Orth(E)$ that contains the identity operator $\Id$ as a weak order unit.
	
	Suppose that $\opset\subseteq\ocops(E)$. Then:
	\begin{enumerate}
		\item\label{res:commutant_of_order_continuous_operators_is_lattice_1} $\orthcom{\opset}$ is an order closed vector sublattice of every regular vector sublattice of $\obops(E)$ containing $\orthcom{\opset}$;
		\item\label{res:commutant_of_order_continuous_operators_is_lattice_2} $\orthcom{\opset}$ is a regular vector sublattice of every Dedekind complete regular vector sublattice of $\obops(E)$ containing $\orthcom{\opset}$;
		\item\label{res:commutant_of_order_continuous_operators_is_lattice_3} $\orthcom{\opset}$ is a Dedekind complete vector lattice.
	\end{enumerate}
\end{theorem}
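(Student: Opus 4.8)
The plan is to dispatch the formal parts first and isolate the single genuinely hard point, namely that $\orthcom{\opset}$ is a \emph{vector sublattice} of $\Orth(E)$. That $\orthcom{\opset}$ is a subalgebra containing $\Id$ is immediate, since the operators commuting with every $S\in\opset$ are closed under linear combinations and composition and include $\Id$. Replacing those $S\in\opset$ that are negative by their (positive) opposites does not change the commutant, so I may assume that every $S\in\opset$ lies in $\pos{\ocops(E)}$. As $\Orth(E)$ is a vector lattice, being a sublattice means $T\in\orthcom{\opset}\Rightarrow\abs T\in\orthcom{\opset}$, and since $\abs T=2T^+-T$ it suffices to prove that $T^+$ commutes with each $S\in\opset$ whenever $T$ does.

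The heart of the matter is therefore the single-operator statement: if $S\in\pos{\ocops(E)}$ and $T\in\Orth(E)$ satisfy $ST=TS$, then $ST^+=T^+S$. I would work inside the $f$-algebra $\Orth(E)$, where $T^+T^-=0$ as operators and $T^\pm y=(Ty)^\pm$ for $y\in\pos E$ (see \cite[Theorem~2.62]{aliprantis_burkinshaw_POSITIVE_OPERATORS_SPRINGER_REPRINT:2006}). For $x\in\pos E$ the elements $T^+x$ and $T^-x$ are disjoint, hence are the positive and negative parts of $Tx$; applying $S\ge0$ to $Tx=T^+x-T^-x$ and comparing with the positive/negative part decomposition of $T(Sx)=S(Tx)$ gives $S(T^\pm x)\ge T^\pm(Sx)$. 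Thus $D\coloneqq ST^+-T^+S=ST^--T^-S$ (the two expressions agree because their difference is $ST-TS=0$) is a \emph{positive} operator. The decisive step is then a sign-forcing argument: multiplying $D=ST^+-T^+S$ on the right by $T^-\ge0$ and using $T^+T^-=0$ gives $DT^-=-T^+ST^-$, which is $\le0$ while also $\ge0$, so $T^+ST^-=0$; symmetrically $T^-ST^+=0$. Feeding this back gives $\abs T D=0$, whence $\Range D$ is contained in the null band $N$ of $T$ and $D=P_N ST^+$, where $P_N$ is the order projection onto $N$.

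To finish killing $D$ I would show, using order continuity, that $S$ maps the carrier band $C=N^{\mathrm d}$ into itself: from $ST=TS$ one gets $S(\Range T^\pm)\subseteq C$ (writing $T^+x=T(P^+x)$ for the order projection $P^+$ onto the carrier of $T^+$ and commuting $S$ past $T$), hence $S(\Range\abs T)\subseteq C$, and this extends to $SC\subseteq C$ because $S$ is positive and order continuous and the ideal generated by $\Range\abs T$ is order dense in the band $C$. Then $P_N SP_C=0$ (for the order projection $P_C$ onto $C$), so $D=P_N ST^+=(P_N SP_C)T^+=0$, i.e.\ $ST^+=T^+S$, as wanted. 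I expect this lemma to be the main obstacle: positivity of $S$ by itself yields only the one-sided inequality $S\abs T\ge\abs T S$, and turning it into an equality genuinely requires both the zero-product structure of $T^+,T^-$ and the order continuity of $S$.

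The remaining assertions follow by order-closedness. If a net $\netgen{R_\alpha}{\alpha\in\is}$ in $\orthcom{\opset}$ satisfies $R_\alpha\oconv R$ in $\obops(E)$, then $R\in\Orth(E)$ by \cref{res:orth_is_closed}, and order convergence implies strong order convergence (\cite[Lemma~4.1]{deng_de_jeu_UNPUBLISHED:2020b}), so $R_\alpha x\oconv Rx$ for every $x\in E$; the order continuity of each $S\in\opset$ then passes the identity $SR_\alpha x=R_\alpha Sx$ to the limit, giving $R\in\orthcom{\opset}$. Hence $\orthcom{\opset}$ is order closed in $\obops(E)$; as it is a vector sublattice of $\obops(E)$ and a net order converging in a regular vector sublattice also order converges, to the same limit, in $\obops(E)$, this gives part~\partref{res:commutant_of_order_continuous_operators_is_lattice_1}. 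Part~\partref{res:commutant_of_order_continuous_operators_is_lattice_3} follows because an order closed sublattice of the Dedekind complete lattice $\Orth(E)$ is Dedekind complete: the finite suprema of a bounded subset increase in order to a supremum, which lies in $\orthcom{\opset}$ by order-closedness. Part~\partref{res:commutant_of_order_continuous_operators_is_lattice_2} then follows since a net in $\orthcom{\opset}$ decreasing to $0$ there has its infimum in any containing Dedekind complete regular sublattice again in $\orthcom{\opset}$ (by order-closedness), hence equal to $0$, so the inclusion is order continuous; and $\Id$ is a weak order unit of $\orthcom{\opset}$ because the lattice operations of $\orthcom{\opset}$ are the restrictions of those of $\Orth(E)$, of which $\Id$ is a weak order unit.
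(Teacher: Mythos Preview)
Your proof is correct, but the route to the key sublattice property is genuinely different from the paper's. The paper never computes $ST^+-T^+S$ directly. Instead, it shows that $T_1\vee T_2$ commutes with $S$ by writing $(T_1\vee T_2)S=\lambda_{T_1\vee T_2}(S)=(\lambda_{T_1}\vee\lambda_{T_2})(S)$ via Synnatzschke's result \cite[Satz~3.1]{synnatzschke:1980} that $T\mapsto\lambda_T$ is a lattice homomorphism $\obops(E)\to\obops(\obops(E))$; then it uses that $\lambda_{T_i}\in\Orth(\obops(E))$ (\cref{res:orthisorth}) together with the pointwise formula \cite[Theorem~2.43]{aliprantis_burkinshaw_LOCALLY_SOLID_RIESZ_SPACES_WITH_APPLICATIONS_TO_ECONOMICS_SECOND_EDITION:2003} to get $(\lambda_{T_1}\vee\lambda_{T_2})(S)=(T_1S)\vee(T_2S)=(ST_1)\vee(ST_2)$; and finally it invokes the Chen--Schep result \cite[Proposition~2.2]{chen_schep:2016} that $T\mapsto\rho_T$ is a lattice homomorphism \emph{when restricted to} $\ocops(E)$\textemdash this is precisely where order continuity of $S$ enters\textemdash to rewrite $(\rho_{T_1}\vee\rho_{T_2})(S)$ as $S(T_1\vee T_2)$. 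Your argument is more self-contained: it avoids the two external lattice-homomorphism theorems, deriving everything from the $f$-algebra identities $T^+T^-=0$, $T^\pm y=(Ty)^\pm$ for $y\ge 0$, and the band structure of $E$, with order continuity of $S$ entering only at the very end to pass from $S(\Range\abs T)\subseteq C$ to $SC\subseteq C$. The trade-off is length and the need to justify carefully that $T^+=TP^+$ for a suitable band projection $P^+$ (this does hold: the band projection in $\Orth(E)$ onto the band generated by $T^+$ is multiplication by such a $P^+$, and it annihilates $T^-$). For the remaining parts your direct arguments are fine; the paper instead packages parts~\partref{res:commutant_of_order_continuous_operators_is_lattice_2} and~\partref{res:commutant_of_order_continuous_operators_is_lattice_3} into a citation of the ``complete sublattice'' machinery of \cite{luxemburg_de_pagter:2005a}, but the content is the same.
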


\begin{proof}
	
For $T\in\Orth(E)$, we let $\Le{T}$ (resp.\ $\Ri{T}$) denote the corresponding left (resp.\ right) multiplication operator on $\obops(E)$. Since \cref{res:orthisorth} implies that $\Le{T}-\Ri{T}$ is an orthomorphism on $\obops(E)$, its kernel is a band in $\obops(E)$ by \cite[Theorem~2.48]{aliprantis_burkinshaw_POSITIVE_OPERATORS_SPRINGER_REPRINT:2006}. It follows from this that the commutants of $\opset$ and $B_\opset$ in $\Orth(E)$ are equal.

We shall now show that this common commutant in $\Orth(E)$ is a vector sublattice of $\Orth(E)$. In view of what we have already established, we may suppose that $\opset$ consists of one positive operator $S$ on $E$. We shall show that for $T_1, T_2\in\Orth(E)$, $T_1\vee T_2$ commutes with $S$ whenever $T_1$ and $T_2$ do. Obviously, $(T_1\vee T_2)S=\lambda_{T_1\vee T_2}(S)$ which, by part~\ref{res:orthisorth2} of \cref{res:orthisorth,} equals $(\lambda_{T_1}\vee\lambda_{T_2})(S)$. Using once more from part~\ref{res:orthisorth2} of \cref{res:orthisorth} that left multiplications by elements of $\Orth(E)$ are orthomorphisms on $\obops(E)$, \cite[Theorem~2.43]{aliprantis_burkinshaw_POSITIVE_OPERATORS_SPRINGER_REPRINT:2006} implies that $(\lambda_{T_1}\vee\lambda_{T_2})(S)=\lambda_{T_1}(S)\vee\lambda_{T_2}(S)=(T_1S)\vee(T_2S)$. As a consequence of the assumption, this equals $(ST_1)\vee (ST_2)$. By a reasoning similar to the one just given, but now using part~\ref{res:orthisorth1} of \cref{res:orthisorth}, this equals $S(T_1\vee T_2)$. Hence $\orthcom{\opset}$ is a vector sublattice of $\Orth(E)$.

It is clear that $\orthcom{\opset}$ is an associative subalgebra of $\Orth(E)$ containing $\Id$ and that $\Id$, which is a weak order unit of $\Orth(E)$, is also one of $\orthcom{\opset}$.

We turn to the remaining statements when $S\subseteq\ocops(E)$. Suppose that $\netgen{T_\alpha}{\alpha\in\is}$ is a net in $\orthcom{\opset}$, that $T\in\obops(E)$, and that $T_\alpha\oconv T$ in $\obops(E)$. Then certainly $T\in\Orth(E)$. Using that $\opset\subseteq\ocops(E)$, it follows from \cref{res:RL_order} that $T$ commutes with all elements of $\opset$. Hence $T\in\orthcom{\opset}$, and we conclude that $\orthcom{\opset}$ is an order closed vector sublattice of $\obops(E)$. Obviously, it is then also order closed in every regular vector sublattice of $\obops(E)$ containing it. We have thus established part~\ref{res:commutant_of_order_continuous_operators_is_lattice_1}.

Take a Dedekind complete regular vector sublattice $\oplattwo$ of $\obops(E)$ that contains $\orthcom{\opset}$. Since we know that $\orthcom{\opset}$ is order closed in $\oplattwo$, \cite[p.~303]{luxemburg_de_pagter:2005a} shows that $\orthcom{\opset}$ is a complete vector sublattice of $\oplattwo$ as this notion is defined on \cite[p.~295-296]{luxemburg_de_pagter:2005a}. It then follows from \cite[p.~296]{luxemburg_de_pagter:2005a} that $\orthcom{\opset}$ is a regular vector sublattice of $\oplattwo$ and, on taking $\oplattwo=\obops(E)$, also that $\orthcom{\opset}$ is Dedekind complete.
\end{proof}

\begin{remark}
As a special case of \cref{res:commutant_of_order_continuous_operators_is_lattice},  it was already established in \cite[Lemma~8.9]{de_rijk_THESIS:2012} that, for a Dedekind complete vector lattice $E$, $\orthcom{\opset}$ is an order closed vector lattice subalgebra of $\Orth(E)$ for every vector sublattice $\opset$ of $\ocops(E)$.
\end{remark}

In \cref{res:commutant_of_order_continuous_operators_is_lattice}, when $\opset\subseteq\ocops(E)$, then the vector lattice $\orthcom{\opset}$ is a Dedekind complete vector lattice with the identity operator $\Id$ as a weak order unit. The unbounded version of Freudenthal's spectral theorem (see \cite[Theorem~40.3]{luxemburg_zaanen_RIESZ_SPACES_VOLUME_I:1971}, for example) then shows that an arbitrary element $T\in\orthcom{\opset}$ is an order limit of a sequence of linear combinations of the components of $\Id$ in $\orthcom{\opset}$. Since the latter are precisely the order projections that commute with $\opset$ we see that, in this case, $\orthcom{\opset}$ does not only contain all information about the collection of bands in $E$ that reduce $\opset$, but that it is also completely determined by this collection.

On a later occasion, we shall report more elaborately on the procedures of taking commutants and also of taking bicommutants in the context of operators on vector lattices and Banach lattices, as well as on their relations with reducing projection bands for sets of operators; see \cite{deng_de_jeu_UNPUBLISHED:bicommutant_2,deng_de_jeu_UNPUBLISHED:bicommutant_1}. For the moment, we content ourselves with the general observation that the study of vector lattice subalgebras of the orthomorphisms is relevant for representation theory on vector lattices.

We shall now set out to study one particular aspect of this, namely, the equality of the adherences of vector sublattices of the orthomorphisms with respect to several of the convergence structures under consideration in this paper. Although from a representation theoretical point of view it would be natural to require that they also be associative subalgebras, this does, so far, not appear to be relevant for these issues. Such results on equal adherences can then also be obtained for associative subalgebras of the orthomorphisms on a Banach lattice, as a consequence of the fact that their norm closures in the orthomorphisms are. in fact, vector sublattices to which the previous results can be applied.

Regarding the results below that are given for vector sublattices of the orthomorphisms, we recall that, for a Dedekind complete vector lattice, several adherences coincide for subsets of the orthomorphisms. Indeed, since, for nets of orthomorphisms, unbounded order convergence coincides with strong unbounded order convergence, and since the convergence in a possible Hausdorff \uoLtop\ coincides with the corresponding strong convergence, the corresponding adherences of subsets of the orthomorphisms are also equal. The same holds for sequential adherences. \emph{For reasons of brevity, we have refrained from including these `obviously also equal' adherences in the statements.}

Although our motivation leads us to study vector sublattice of the orthomorphisms, the results as we shall derive them for these are actually consequences of more general statements for vector lattices that need not even consist of operators. These are of interest in their own right. Other such results are \cite[Theorem~8.8]{deng_de_jeu:2022}, \cite[Theorem~2.13]{gao_leung:2018}, and \cite[Proposition 2.12]{taylor:2019}.

We start by establishing results showing that the closures of vector sublattices (or associative subalgebras) in a possible Hausdorff \uoLtop\ coincide with their closures in other linear topologies on the vector lattices (or associative algebras) under consideration. These are based on the following result, which is established in the first paragraph of the proof of \cite[Theorem~8.8]{deng_de_jeu:2022}. For the definition of the absolute weak topology $\abs{\sigma}(E, I)$ on $E$ that occurs in it we refer to \cite[p.~63]{aliprantis_burkinshaw_LOCALLY_SOLID_RIESZ_SPACES_WITH_APPLICATIONS_TO_ECONOMICS_SECOND_EDITION:2003}.

\begin{proposition}\label{res:uoLt_closure_separating_order_dual}
	Let $E$ be a vector lattice such that $\ocdual{E}$ separates the points of $E$. Then $E$ admits a \necun\ Hausdorff \uoLtop\ $\uoLt_E$. Take an ideal $I$ of $\ocdual{E}$ that separates the points of $E$, and take a vector sublattice $F$ of $E$. Then
	\[
	\overline{F}^{\uoLt_{E}}=\overline{F}^{\sigma(E, I)}=\overline{F}^{\abs{\sigma}(E, I)}
	\]
	in $E$.
\end{proposition}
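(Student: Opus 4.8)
The plan is to realise all three topologies as explicit locally solid (and, in two cases, locally convex) topologies, and then to split the two required equalities into a duality argument and a lattice-theoretic one. Write $\tau\coloneqq\abs{\sigma}(E,I)$. Since $I\subseteq\ocdual{E}$ consists of order continuous functionals, $\tau$ is a Lebesgue topology, and it is Hausdorff because $I$ separates the points of $E$. By the theory of unbounded locally solid topologies (see \cite{taylor:2019}), the associated unbounded topology $\mathrm{u}\tau$ is then a Hausdorff \uoLtop; in particular $E$ admits a \necun\ Hausdorff \uoLtop, and by its uniqueness $\uoLt_E=\mathrm{u}\tau$. It therefore suffices to prove
\[
\overline{F}^{\mathrm{u}\tau}=\overline{F}^{\tau}=\overline{F}^{\sigma(E,I)}.
\]

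For the second equality I would argue by duality. Both $\sigma(E,I)$ and $\tau=\abs{\sigma}(E,I)$ are locally convex, and each has topological dual exactly $I$: for $\sigma(E,I)$ this holds by definition, and for $\tau$ it uses that $I$ is an ideal of $\ocdual{E}$, so that no functionals beyond $I$ become continuous. Two locally convex topologies with the same dual have the same closed convex sets; as the vector sublattice $F$ is in particular convex, $\overline{F}^{\tau}=\overline{F}^{\sigma(E,I)}$. Equivalently, the inclusion $\sigma(E,I)\subseteq\tau$ gives one direction, and Hahn--Banach separation gives the other.

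The first equality, $\overline{F}^{\mathrm{u}\tau}=\overline{F}^{\tau}$, is where the real work lies, and I expect it to be the main obstacle. Since $\mathrm{u}\tau$ is coarser than $\tau$, the inclusion $\overline{F}^{\tau}\subseteq\overline{F}^{\mathrm{u}\tau}$ is immediate, so everything is in the reverse inclusion. Here I would use that the $\tau$- and $\mathrm{u}\tau$-closures of a sublattice are again sublattices and that the lattice operations are continuous, in order to reduce to the case of a positive $x\in\overline{F}^{\mathrm{u}\tau}$ that is the $\mathrm{u}\tau$-limit of a positive net $\net\subseteq F$. The key elementary inequality is $\abs{a\wedge u-b\wedge u}\leq\abs{a-b}\wedge u$ for $a,b,u\in\pos{E}$. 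Applying it with $u=x_\beta$ (for $\alpha,\beta$ ranging over the index set of the net) and using that $x_\alpha\utauconv x$ means $\abs{x_\alpha-x}\wedge x_\beta\tauconv 0$, local solidity yields $x_\alpha\wedge x_\beta\tauconv x\wedge x_\beta$ for each fixed $\beta$ as $\alpha$ runs; since $x_\alpha\wedge x_\beta\in F$, this shows $x\wedge x_\beta\in\overline{F}^{\tau}$. Applying the same inequality with $u=x$ gives $\abs{x\wedge x_\beta-x}\leq\abs{x_\beta-x}\wedge x\tauconv 0$, so $x\wedge x_\beta\tauconv x$; as $\overline{F}^{\tau}$ is $\tau$-closed, I conclude $x\in\overline{F}^{\tau}$.

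Finally I would record that this two-step net argument is exactly the content of the first paragraph of the proof of \cite[Theorem~8.8]{deng_de_jeu_UNPUBLISHED:2020a}, so in the write-up the first equality may simply be cited from there, while the existence of $\uoLt_E$ and the weak/absolute-weak equality are dispatched as above. The only points demanding care are the reduction to a positive $x$ and to a positive approximating net, and the observation that passing to $x_\alpha\wedge x_\beta$ keeps the net inside $F$; both hinge on $F$ being a sublattice rather than merely a subspace.
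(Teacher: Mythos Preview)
Your proposal is correct and, in fact, more detailed than the paper's own treatment: the paper does not give a proof but simply records that the result ``is established in the first paragraph of the proof of \cite[Theorem~8.8]{deng_de_jeu_UNPUBLISHED:2020a}''\textemdash precisely the citation you yourself arrive at in your final paragraph. Your reconstruction of that argument (identifying $\uoLt_E$ as $\mathrm{u}\abs{\sigma}(E,I)$, the Mackey--Arens step for $\sigma$ versus $\abs{\sigma}$, and the two-pass $\wedge$-trick for $\tau$ versus $\mathrm{u}\tau$) is accurate, so there is nothing to correct.
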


In the following consequence of \cref{res:uoLt_closure_separating_order_dual}, the lattice $\oplattwo$ of operators can be taken to be $\Orth(E)$.

\begin{corollary}\label{res:operators_uoLt_closure_separating_order_dual}
	Let $E$ be a Dedekind complete vector lattice such that $\ocdual{E}$ separates the points of $E$, and let $\oplat $ be a regular vector sublattice of $\obops(E)$. Then $\ocdual{\oplat}$ separates the points of $\oplat$, and $\oplat$ admits a \necun\ Hausdorff \uoLtop\ $\uoLt_{\oplat}$. Take an ideal $I$ of $\ocdual{\oplat}$ that separates the points of $\oplat$, and take a vector sublattice $\oplattwo$ of $\oplat$. Then
	\[
	\overline{\oplattwo}^{\uoLt_{\oplat}}=\overline{\oplattwo}^{\sigma(\oplat, I)}=\overline{\oplattwo}^{\abs{\sigma}(\oplat, I)}
	\]
	in $\oplat$.
\end{corollary}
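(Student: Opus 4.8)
The plan is to reduce this corollary to a direct application of \cref{res:uoLt_closure_separating_order_dual}, with the vector lattice $\oplat$ playing the role of $E$ there. The only hypothesis of that proposition that is not immediately available is that $\ocdual{\oplat}$ separates the points of $\oplat$; once this is established, the existence of the \necun\ Hausdorff \uoLtop\ $\uoLt_{\oplat}$ and the asserted chain of equalities of the three closures of $\oplattwo$ follow at once by applying \cref{res:uoLt_closure_separating_order_dual} to $\oplat$, to the point-separating ideal $I$ of $\ocdual{\oplat}$, and to the vector sublattice $\oplattwo$. So essentially all the work lies in the separation property.

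To produce separating functionals on $\oplat$, I would compose evaluation at a point of $E$ with a separating functional on $E$. Given $T\in\oplat$ with $T\neq 0$, choose $x\in E$ with $Tx\neq 0$, and then, using that $\ocdual{E}$ separates the points of $E$, choose $\psi\in\ocdual{E}$ with $\psi(Tx)\neq 0$. Define $\phi\colon\oplat\to\RR$ by $\phi(S)\coloneqq\psi(Sx)$ for $S\in\oplat$. This $\phi$ is linear and satisfies $\phi(T)\neq 0$, so it remains only to check that $\phi\in\ocdual{\oplat}$, that is, that $\phi$ is order continuous, since order continuity automatically entails order boundedness (as recalled in \cref{sec:preliminaries}).

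The order continuity of $\phi$ is the crux, and the point where the standing hypotheses on $\oplat$ enter. Suppose $S_\alpha\oconv 0$ in $\oplat$. Since $\oplat$ is a \emph{regular} vector sublattice of $\obops(E)$, a net $y_\beta\downarrow 0$ in $\oplat$ witnessing this order convergence also decreases to $0$ in $\obops(E)$; as the lattice operations and the order of $\oplat$ are those inherited from $\obops(E)$, the same witnessing data shows $S_\alpha\oconv 0$ in $\obops(E)$. The fact that order convergence of a net in $\obops(E)$ implies its strong order convergence (the value $1$ in the cell $(\o,\SO)$ of Table~\ref{table:convergences_general}, from \cite[Lemma~4.1]{deng_de_jeu_UNPUBLISHED:2020b}) then gives $S_\alpha x\oconv 0$ in $E$, and the order continuity of $\psi$ yields $\phi(S_\alpha)=\psi(S_\alpha x)\oconv 0$ in $\RR$. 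Hence $\phi$ is order continuous, so $\phi\in\ocdual{\oplat}$, and consequently $\ocdual{\oplat}$ separates the points of $\oplat$. With the separation property in hand, the proof is completed by the reduction of the first paragraph. The only genuine obstacle is this order-continuity verification, whose delicate point is the descent of order convergence from $\oplat$ to $\obops(E)$---precisely what the regularity of the sublattice supplies.
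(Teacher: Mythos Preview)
Your proof is correct and follows essentially the same approach as the paper: both reduce to \cref{res:uoLt_closure_separating_order_dual} after verifying that $\ocdual{\oplat}$ separates points via the functionals $S\mapsto\psi(Sx)$, using regularity of $\oplat$ together with \cite[Lemma~4.1]{deng_de_jeu_UNPUBLISHED:2020b} to establish their order continuity. You simply unpack the regularity step in more detail than the paper does.
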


\begin{proof}
For $\varphi\in\ocdual{E}$ and $x\in E$, define the order bounded linear functional on $\oplat$ by setting $\Phi_{\varphi,x}(T)\coloneqq \varphi(Tx)$ for $T\in\oplat$. Since $\oplat$ is a regular vector sublattice of $\obops(E)$, an appeal to \cite[Lemma~4.1]{deng_de_jeu:2021} shows that $\Phi_{\varphi,x}\in\ocdual{\oplat}$. Is then clear that $\ocdual{\oplat}$ separates the points of $\oplat$. Now \cref{res:uoLt_closure_separating_order_dual} can be applied with $E$ replaced by $\oplat$ and $F$ by $\oplattwo$.
\end{proof}	

\cref{res:uoLt_closure_separating_order_dual} is also used in the proof of the following.

\begin{theorem}\label{res:f-algebra_uoLt_closure_separating_order_dual}
	Let $\opalg$ be a unital \falg\ such that its identity element $e$ is also a strong order unit of $\opalg$, and such that it is complete in the submultiplicative order unit norm $\norm{\,\cdot\,}_e$ on $\opalg$. Suppose that $\ocdual{\opalg}$ separates the points of $\opalg$. Then $\opalg$ admits a \necun\ Hausdorff \uoLtop\ $\uoLt_{\opalg}$. Take an ideal $I$ of $\ocdual{\opalg}$ that separates the points of $\opalg$, and take a \uppars{not necessarily unital} associative subalgebra $\opalgtwo$ of $E$. Then
	\begin{align}\label{eq:f-algebra_uoLT_closure_separating_order_dual}
		\begin{split}
			&\overline{\opalgtwo}^{\uoLt_{\opalg}}=\overline{\opalgtwo}^{\sigma(\opalg, I)}=\overline{\opalgtwo}^{\abs{\sigma}(\opalg, I)}=\\
			&\overline{\overline{\opalgtwo}^{\norm{\,\cdot\,}_e}}^{\uoLt_{\opalg}}
			=\overline{\overline{\opalgtwo}^{\norm{\,\cdot\,}_e}}^{\sigma(\opalg, I)}=
			\overline{\overline{\opalgtwo}^{\norm{\,\cdot\,}_e}}^{\abs{\sigma}(\opalg, I)}		
		\end{split}
	\end{align}
	in $\opalg$.
\end{theorem}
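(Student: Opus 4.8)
The plan is to deduce everything from \cref{res:uoLt_closure_separating_order_dual}, whose hypotheses are satisfied by $\opalg$ itself: since $\ocdual{\opalg}$ separates the points of $\opalg$, that result already produces the \necun\ Hausdorff \uoLtop\ $\uoLt_{\opalg}$ and, for \emph{vector sublattices} of $\opalg$, the coincidence of the three closures taken in $\uoLt_{\opalg}$, $\sigma(\opalg,I)$, and $\abs{\sigma}(\opalg,I)$. The obstruction is that $\opalgtwo$ is only an associative subalgebra, so \cref{res:uoLt_closure_separating_order_dual} cannot be applied to $\opalgtwo$ directly. My strategy is therefore twofold. First I would show that the order unit norm closure $\opalgtwo_0\coloneqq\overline{\opalgtwo}^{\norm{\,\cdot\,}_e}$ \emph{is} a vector sublattice of $\opalg$, so that \cref{res:uoLt_closure_separating_order_dual} does apply to it and yields the three equalities in the bottom row of \cref{eq:f-algebra_uoLT_closure_separating_order_dual}. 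Second I would show that passing to the norm closure enlarges none of the three coarser closures, which matches the top row of \cref{eq:f-algebra_uoLT_closure_separating_order_dual} with its bottom row entry by entry.

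The main step, and the one I expect to be the real obstacle, is that $\opalgtwo_0$ is a vector sublattice; it suffices to show $\abs{a}\in\opalgtwo_0$ for every $a\in\opalgtwo_0$. Here I would use that, under the stated hypotheses, $\opalg$ is a uniformly complete unital $f$-algebra in which $e$ is a positive strong order unit, hence (as a Banach lattice and as an $f$-algebra) isomorphic to some $\Cee(K)$ with $e$ the constant function $\one$; equivalently, $\opalg$ carries a continuous functional calculus for which $\abs{a}=\sqrt{a^2}$. The role of the $f$-algebra structure is precisely that $\abs{a}$ is recovered from $a^2$, which stays inside the subalgebra: $a^2\in\opalgtwo_0$. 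To accommodate the possible absence of a unit in $\opalgtwo$, set $M\coloneqq\norm{a^2}_e$, so that $0\le a^2\le M e$, and approximate $\sqrt{\,\cdot\,}$ on $[0,M]$ uniformly by polynomials $p_n$ \emph{without constant term}: since $\sqrt{0}=0$, subtracting the constant terms of Weierstrass approximants costs $o(1)$ and preserves uniform convergence. Then $p_n(a^2)\in\opalgtwo_0$, because $p_n$ has no constant term and $\opalgtwo_0$ is an algebra containing $a^2$, while $p_n(a^2)\to\sqrt{a^2}=\abs{a}$ in $\norm{\,\cdot\,}_e$ by the functional calculus, using that $\norm{r(a^2)}_e\le\sup_{t\in[0,M]}\abs{r(t)}$ for every polynomial $r$. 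As $\opalgtwo_0$ is norm closed, $\abs{a}\in\opalgtwo_0$. Applying \cref{res:uoLt_closure_separating_order_dual} to the vector sublattice $\opalgtwo_0$ then gives the equalities of the bottom row of \cref{eq:f-algebra_uoLT_closure_separating_order_dual}.

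It remains to prove $\overline{\opalgtwo}^{\tau}=\overline{\opalgtwo_0}^{\tau}$ for each $\tau\in\{\uoLt_{\opalg},\,\sigma(\opalg,I),\,\abs{\sigma}(\opalg,I)\}$. One inclusion is immediate from $\opalgtwo\subseteq\opalgtwo_0$. For the reverse it is enough to show $\opalgtwo_0\subseteq\overline{\opalgtwo}^{\tau}$, and for this I would verify that $\norm{\,\cdot\,}_e$-convergence implies $\tau$-convergence. Indeed, if $b_n\in\opalgtwo$ with $\norm{b_n-a}_e\to 0$, then for any chosen $\delta_k\downarrow 0$ we have $\abs{b_n-a}\le\delta_k e$ eventually, and $\delta_k e\downarrow 0$ by the Archimedean property, so $b_n\oconv a$, hence $b_n\uoconv a$, hence $b_n\to a$ in $\uoLt_{\opalg}$. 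Moreover every $\varphi\in I\subseteq\ocdual{\opalg}$ is order bounded and thus $\norm{\,\cdot\,}_e$-continuous via $\abs{\varphi}(\abs{x})\le\norm{x}_e\abs{\varphi}(e)$, so the same sequence also converges to $a$ in $\sigma(\opalg,I)$ and in $\abs{\sigma}(\opalg,I)$. Hence each $a\in\opalgtwo_0$ lies in $\overline{\opalgtwo}^{\tau}$, giving $\overline{\opalgtwo_0}^{\tau}\subseteq\overline{\overline{\opalgtwo}^{\tau}}^{\tau}=\overline{\opalgtwo}^{\tau}$.

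Combining the two steps, the bottom row of \cref{eq:f-algebra_uoLT_closure_separating_order_dual} is internally equal by \cref{res:uoLt_closure_separating_order_dual} applied to $\opalgtwo_0$, and each top-row entry equals the bottom-row entry directly below it by the absorption argument; together these give all six equalities. The genuinely hard part is the sublattice property of $\opalgtwo_0$: everything else is a routine transfer of the sublattice result \cref{res:uoLt_closure_separating_order_dual} through the order unit norm, and the only subtlety there is the use of constant-term-free polynomial approximation to handle non-unital $\opalgtwo$.
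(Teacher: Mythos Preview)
Your proposal is correct and follows essentially the same two-step architecture as the paper: first show that the norm closure $\opalgtwo_0=\overline{\opalgtwo}^{\norm{\,\cdot\,}_e}$ is a vector sublattice so that \cref{res:uoLt_closure_separating_order_dual} applies to give the bottom row, and then use that $\norm{\,\cdot\,}_e$-convergence implies convergence in each of the coarser topologies to collapse each top-row closure onto the corresponding bottom-row one.

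The only notable divergence is in the first step. The paper does not work this out; it simply invokes \cite[Theorem~6.1]{deng_de_jeu_UNPUBLISHED:2020b} to conclude that $\opalgtwo_0$ is a Banach $f$-subalgebra of $\opalg$, hence in particular a vector sublattice. Your functional-calculus argument (approximating $\sqrt{\,\cdot\,}$ on $[0,M]$ by polynomials without constant term, so that $p_n(a^2)\in\opalgtwo_0$ and $p_n(a^2)\to\abs{a}$) is a self-contained proof of precisely this fact, and handles the non-unital case correctly. So what you identify as ``the genuinely hard part'' is, in the paper, outsourced to a cited result; your version makes the proof independent of that reference at the cost of a short extra paragraph. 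For the second step the paper phrases things via the general topological observation $\overline{\overline{S}^{\tau_1}}^{\tau_2}=\overline{S}^{\tau_2}$ for $\tau_2$ weaker than $\tau_1$, mentioning explicitly only that $\norm{\,\cdot\,}_e$-convergence implies $\uoLt_{\opalg}$-convergence; your explicit verification for $\sigma(\opalg,I)$ and $\abs{\sigma}(\opalg,I)$ as well is a harmless elaboration of the same idea.
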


Before giving the proof, we mention the following fact that is easily verified. Suppose that $X$ is a topological space that is supplied with two topologies $\tau_1$ and $\tau_2$, where $\tau_2$ is weaker than $\tau_1$. Then  $\overline{\overline{S}^{\tau_1}}^{\tau_2}=\overline{S}^{\tau_2}$ for every subset $S$ of $X$.

\begin{proof}
It follows from \cite[Theorem~6.1]{deng_de_jeu:2021} that $\overline{\opalgtwo}^{\norm{\,\cdot\,}_e}$ is a Banach \fsubalg\ of $\opalg$. Being a vector sublattice of $\opalg$, \cref{res:uoLt_closure_separating_order_dual} shows that the sets in the second line of \cref{eq:f-algebra_uoLT_closure_separating_order_dual} are equal. Since the convergence of a net in the order unit norm $\norm{\,\cdot\,}_e$  implies its order convergence to the same limit (and then also its convergence in $\uoLt_\opalg$ to the same limit), we are done by an appeal to the remark preceding the proof.
\end{proof}	

The following is now clear from \cref{res:f-algebra_uoLt_closure_separating_order_dual} and the argument in the proof of \cref{res:operators_uoLt_closure_separating_order_dual}.

\begin{corollary}\label{res:algebra_in_orth_uoLt_closure_separating_order_dual}
	Let $E$ be a Dedekind complete Banach lattice. Suppose that $\ocdual{E}$ separates the points of $E$. Then $\ocdual{\Orth(E)}$ separates the points of $\Orth(E)$, and $\Orth(E)$ admits a \necun\ Hausdorff \uoLtop\ $\uoLt_{\Orth(E)}$. Take an ideal $I$ of $\ocdual{\Orth(E)}$ that separates the points of $\Orth(E)$, and take a \uppars{not necessarily unital} associative subalgebra $\opalg$ of $\Orth(E)$. Then
	\begin{align*}
	&\overline{\opalg}^{\uoLt_{\Orth(E)}}=\overline{\opalg}^{\sigma(\Orth(E), I)}=\overline{\opalg}^{\abs{\sigma}(\Orth(E), I)}=\\
	&\overline{\overline{\opalg}^{\norm{\,\cdot\,}}}^{\uoLt_{\Orth(E)}}=\overline{\overline{\opalg}^{\norm{\,\cdot\,}}}^{\sigma(\Orth(E), I)}=\overline{\overline{\opalg}^{\norm{\,\cdot\,}}}^{\abs{\sigma}(\Orth(E), I)}
	\end{align*}
	in $\Orth(E)$;  here $\norm{\,\cdot\,}$ denotes the coinciding operator norm, order unit norm with respect to the identity operator, and regular norm on $\Orth(E)$.
\end{corollary}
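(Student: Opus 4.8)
The plan is to deduce the corollary by applying \cref{res:f-algebra_uoLt_closure_separating_order_dual} with the ambient $f\!$-algebra there taken to be $\Orth(E)$ and its subalgebra taken to be the associative subalgebra $\opalg$ of the present corollary, while the separation of points is supplied by reusing the argument in the proof of \cref{res:operators_uoLt_closure_separating_order_dual}.

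First I would verify that $\Orth(E)$ satisfies the hypotheses that \cref{res:f-algebra_uoLt_closure_separating_order_dual} imposes on its $f\!$-algebra. It is a commutative unital $f\!$-algebra whose identity element is the identity operator $\Id$. The facts that $\Id$ is a positive \emph{strong} order unit, that the operator norm, the order unit norm $\norm{\,\cdot\,}_\Id$, and the regular norm on $\Orth(E)$ coincide, and that $\Orth(E)$ is complete in this norm, are standard for the orthomorphisms on a Banach lattice; alternatively, they are precisely what \cref{res:commutant_of_bounded_operators_is_lattice} yields upon taking $\opset$ to be trivial, so that $\orthcom{\opset}=\Orth(E)$. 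As this common norm is the operator norm, it is both submultiplicative and complete. Hence all the hypotheses on the $f\!$-algebra hold, so that, in the notation of that theorem, $e=\Id$ and $\norm{\,\cdot\,}_e=\norm{\,\cdot\,}$.

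Next I would check that $\ocdual{\Orth(E)}$ separates the points of $\Orth(E)$. Because orthomorphisms are order continuous, $\Orth(E)$ is a regular vector sublattice of $\obops(E)$, so the argument in the proof of \cref{res:operators_uoLt_closure_separating_order_dual} applies verbatim: for $\varphi\in\ocdual{E}$ and $x\in E$ the functional $\Phi_{\varphi,x}(T)\coloneqq\varphi(Tx)$ lies in $\ocdual{\Orth(E)}$ by \cite[Lemma~4.1]{deng_de_jeu_UNPUBLISHED:2020b}. If $T\in\Orth(E)$ is nonzero, then $Tx\neq 0$ for some $x\in E$, and, since $\ocdual{E}$ separates the points of $E$, there is a $\varphi\in\ocdual{E}$ with $\Phi_{\varphi,x}(T)=\varphi(Tx)\neq 0$. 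Thus $\ocdual{\Orth(E)}$ separates the points of $\Orth(E)$, and \cref{res:f-algebra_uoLt_closure_separating_order_dual} now yields the \necun\ Hausdorff \uoLtop\ $\uoLt_{\Orth(E)}$ together with the full chain of six equalities, completing the proof.

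The main obstacle is not conceptual but one of matching conventions: one must confirm that $\Orth(E)$ meets the precise list of hypotheses of \cref{res:f-algebra_uoLt_closure_separating_order_dual}---most delicately that $\Id$ is a \emph{strong} order unit and that the order unit norm, operator norm, and regular norm genuinely coincide and are complete---so that the order unit norm appearing in the theorem is exactly the norm $\norm{\,\cdot\,}$ named in the corollary. Once this identification is in place, the separation-of-points step is short and the application of the theorem is immediate.
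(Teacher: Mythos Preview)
Your proposal is correct and matches the paper's own proof, which simply states that the corollary is clear from \cref{res:f-algebra_uoLt_closure_separating_order_dual} together with the argument in the proof of \cref{res:operators_uoLt_closure_separating_order_dual}. One minor quibble: when you invoke \cref{res:commutant_of_bounded_operators_is_lattice} with $\opset$ ``trivial'', note that the hypothesis there requires $\opset$ to be non-empty, so you should take, e.g., $\opset=\{\Id\}$ rather than $\opset=\emptyset$; this still yields $\orthcom{\opset}=\Orth(E)$ and everything you need.
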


We shall now continue by establishing results showing that the closures of vector sublattices (or associative subalgebras) in a possible Hausdorff \uoLtop\ coincide with their adherences with respect to various convergence structures on the enveloping vector lattices (or vector lattice algebras) under consideration in this paper.

 Needless to say, under appropriate conditions, `topological' results as obtained above may apply at the same time as `adherence' results to be obtained below. \emph{For reasons of brevity, we have refrained from formulating such `combined' results.}

Let us also notice at this point that the results below imply that the adherences of vector sublattices that occur in the statements are closed with respect to the pertinent convergence structures. Indeed, these adherences are set maps that map vector sublattices to vector sublattices. When they agree on vector sublattices with the topological closure operator that is supplied by the Hausdorff \uoLtop, then they, too, are idempotent. For example, the unbounded order adherence of the vector sublattice $F$ in \cref{res:uoLt_closure_weak_unit_countable_sup_property}, is unbounded order closed. \emph{For reasons of brevity, we have refrained from including such consequences in the results.}

We start by considering two cases where the enveloping vector lattices have weak order units.

\begin{proposition}\label{res:uoLt_closure_weak_unit_countable_sup_property}
	Let $E$ be a Dedekind complete vector lattice with the countable sup property and a weak order unit. Suppose that $E$ admits a \necun\ Hausdorff \uoLtop \ $\uoLt_E$. Let $F$ be a vector sublattice of $E$. Then
	\[
	\overline{F}^{\uoLt_E}=\suoadh{F}=\uoadh{F}
	\]
	in $E$.
\end{proposition}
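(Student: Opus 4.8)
The three sets form a chain, and the plan is to prove
$\suoadh{F}\subseteq\uoadh{F}\subseteq\overline{F}^{\uoLt_E}\subseteq\suoadh{F}$, so that all three coincide. The first two inclusions are immediate: a sequence is a net, giving $\suoadh{F}\subseteq\uoadh{F}$; and since $\uoLt_E$ is a \uoLtop, unbounded order convergence of a net in $E$ implies its $\uoLt_E$-convergence to the same limit, giving $\uoadh{F}\subseteq\overline{F}^{\uoLt_E}$. All the work is in the reverse inclusion $\overline{F}^{\uoLt_E}\subseteq\suoadh{F}$, i.e.\ in showing that a $\uoLt_E$-closure point of $F$ is the unbounded order limit of a \emph{sequence} in $F$.

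The first ingredient is metrizability. Because $E$ has a weak order unit, say $e$, and the countable sup property, the Hausdorff topology $\uoLt_E$ is metrizable (this is precisely where these two hypotheses enter; see \cite{taylor:2019}). Hence it is induced by a translation-invariant, solid, subadditive Riesz pseudonorm $\rho$, and it has the Lebesgue property, since $u_\gamma\downarrow 0$ implies $u_\gamma\oconv 0$, hence $u_\gamma\uoconv 0$, hence $u_\gamma\conv{\uoLt_E}0$, i.e.\ $\rho(u_\gamma)\to 0$. Metrizability lets me replace the defining net by a sequence: given $x\in\overline{F}^{\uoLt_E}$, I can choose a sequence $\seq{f_n}{n}$ in $F$ with $\rho(f_n-x)\to 0$. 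Setting $g_n\coloneqq\abs{f_n-x}\wedge e$, local solidity gives $\rho(g_n)\to 0$ with $0\le g_n\le e$, and after passing to a subsequence I may assume $\sum_{n\ge 1}\rho(g_n)<\infty$.

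The heart of the argument is then a Riesz-type subsequence estimate turning this topologically null, order bounded sequence into an order null one. For $N\ge 1$ put $w_N\coloneqq\bigvee_{n\ge N}g_n$, which exists in $[0,e]$ by Dedekind completeness and satisfies $w_N\downarrow w$ for some $w\ge 0$. Approximating by the finite suprema $w_{N,M}\coloneqq\bigvee_{N\le n\le M}g_n\uparrow_M w_N$, the Lebesgue property applied to $w_N-w_{N,M}\downarrow 0$, together with solidity and subadditivity of $\rho$, yields
\[
\rho(w_N)=\lim_{M\to\infty}\rho(w_{N,M})\le\lim_{M\to\infty}\sum_{n=N}^{M}\rho(g_n)=\sum_{n\ge N}\rho(g_n).
\]
Since $0\le w\le w_N$, solidity gives $\rho(w)\le\rho(w_N)\le\sum_{n\ge N}\rho(g_n)\to 0$, so $\rho(w)=0$ and hence $w=0$ by Hausdorffness. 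Thus $w_N\downarrow 0$ with $g_n\le w_N$ for all $n\ge N$, which is exactly $g_n\oconv 0$ in $E$.

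Finally I would transfer this back. As $e$ is a weak order unit, $\abs{f_n-x}\wedge e\oconv 0$ is equivalent to $f_n-x\uoconv 0$ by \cite[Corollary~3.5]{gao_troitsky_xanthos:2017}; since each $f_n\in F$, this shows $x\in\suoadh{F}$ and closes the chain. The main obstacle is exactly this passage from $\uoLt_E$-convergence to unbounded order convergence \emph{along a subsequence}: it rests on metrizability of $\uoLt_E$ (the cleanest thing to quote, and where the countable sup property and the weak order unit are used) and on the extraction of an order null subsequence from a topologically null, order bounded one via the displayed pseudonorm estimate (where Dedekind completeness and the Lebesgue property are used). As an alternative to the self-contained estimate above, this subsequence principle can be drawn from the general vector lattice results flagged in the text, such as \cite[Theorem~2.13]{gao_leung:2018} or \cite[Proposition~2.12]{taylor:2019}.
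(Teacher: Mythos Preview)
Your argument is correct. The paper takes a different route for the hard inclusion $\overline{F}^{\uoLt_E}\subseteq\suoadh{F}$: rather than invoking metrizability of $\uoLt_E$, it works directly with the approximating net. Given $x\in\overline{F}^{\uoLt_E}$ and a net $\net$ in $F$ with $x_\alpha\conv{\uoLt_E}x$, the order bounded net $\abs{x_\alpha - x}\wedge e$ is $\uoLt_E$-null, and \cite[Theorem~4.19]{aliprantis_burkinshaw_LOCALLY_SOLID_RIESZ_SPACES_WITH_APPLICATIONS_TO_ECONOMICS_SECOND_EDITION:2003} (which uses the Lebesgue property of $\uoLt_E$ together with the countable sup property) directly furnishes an increasing sequence of indices $(\alpha_n)$ along which it is order null; then \cite[Lemma~3.2]{gao_xanthos:2014} gives $x_{\alpha_n}\uoconv x$. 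Thus in the paper the countable sup property enters through the extraction theorem and the weak order unit only through making the net order bounded and characterising uo-convergence, whereas you consume both hypotheses at once via metrizability and then carry out the Riesz-type extraction by hand. Your self-contained pseudonorm estimate is a pleasant substitute for the cited black box; the paper's version is shorter but leans more heavily on the literature.
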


\begin{proof}
Clearly, we have $\suoadh{F}\subseteq \uoadh{F}\subseteq \overline{F}^{\uoLt_E}$. Let $e$ be a positive weak order unit of $E$. Take $x\in \overline{F}^{\uoLt_E}$. There exists a net $\net$ in $F$ with $x_\alpha\conv{\uoLt_E} x$. Then $\abs{x_\alpha-x}\wedge e\conv{\uoLt_E} 0$, and we conclude from \cite[Theorem~4.19]{aliprantis_burkinshaw_LOCALLY_SOLID_RIESZ_SPACES_WITH_APPLICATIONS_TO_ECONOMICS_SECOND_EDITION:2003} that there exists an increasing sequence $\seq{\alpha_n}{n}$ of indices in $\is$ such that $\abs{x_{\alpha_n}-x}\wedge e\oconv 0$ in $E$. An appeal to \cite[Lemma~3.2]{gao_xanthos:2014} shows that $x_{\alpha_n}\uoconv x$ in $E$. Hence $x\in \suoadh{F}$. We conclude that $\overline{F}^{\uoLt_E}\subseteq \suoadh{F}$.
\end{proof}

On combining  \cref{res:all_three_or_none}, \cref{res:uoLt_closure_weak_unit_countable_sup_property}, and \cite[Proposition~6.5]{deng_de_jeu:2021}, the following is easily obtained. We recall that a subset of a vector lattice is said to be an order basis when the band that it generates is the whole vector lattice.

\begin{corollary}\label{res:operators_uoLt_closure_weak_unit_countable_sup_property}
	Let $E$ be a Dedekind complete vector lattice with the countable sup property and an at most countably infinite order basis. Suppose that $E$ admits a \necun\ Hausdorff \uoLtop. Then $\Orth(E)$ admits a \necun\ Hausdorff \uoLtop\ $\uoLt_{\Orth(E)}$. Let $\oplat$ be a vector sublattice of $\Orth(E)$. Then
	\[
	\overline{\oplat}^{\uoLt_{\Orth(E)}}=\suoadh{\oplat}=\uoadh{\oplat}
	\]
	in $\Orth(E)$.
\end{corollary}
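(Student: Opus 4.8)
The plan is to deduce the statement by verifying that the hypotheses of \cref{res:uoLt_closure_weak_unit_countable_sup_property} are satisfied when the vector lattice there is taken to be $\Orth(E)$ and the vector sublattice there is taken to be $\oplat$. There are four conditions to check: that $\Orth(E)$ is Dedekind complete, that it has a weak order unit, that it admits a \necun\ Hausdorff \uoLtop, and that it has the countable sup property.

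First I would dispose of the three easy points. Since $E$ is Dedekind complete, $\obops(E)$ is Dedekind complete, and $\Orth(E)$ is a band in $\obops(E)$; hence $\Orth(E)$ is Dedekind complete. The identity operator $\Id$ is a weak order unit of $\Orth(E)$, which supplies the required weak order unit. Finally, since $E$ admits a \necun\ Hausdorff \uoLtop\ by assumption, \cref{res:all_three_or_none} guarantees that $\Orth(E)$ does as well; this is exactly the asserted existence of $\uoLt_{\Orth(E)}$, so this part of the conclusion is obtained along the way.

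The one substantive point, and the step I expect to be the main obstacle, is verifying that $\Orth(E)$ enjoys the countable sup property. This is precisely why the hypotheses on $E$ include \emph{both} the countable sup property and an at most countably infinite order basis: I would invoke \cite[Proposition~6.5]{deng_de_jeu_UNPUBLISHED:2020b}, which under these two hypotheses on $E$ yields that $\Orth(E)$ has the countable sup property. These are the only places where the two assumptions on $E$ are consumed; neither is needed elsewhere in the argument.

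With all four hypotheses in hand, \cref{res:uoLt_closure_weak_unit_countable_sup_property}, applied to $\Orth(E)$ and its vector sublattice $\oplat$, gives at once that $\overline{\oplat}^{\uoLt_{\Orth(E)}}=\suoadh{\oplat}=\uoadh{\oplat}$ in $\Orth(E)$, which is the desired conclusion.
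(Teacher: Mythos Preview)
Your proposal is correct and follows essentially the same route as the paper, which simply notes that the result follows on combining \cref{res:all_three_or_none}, \cref{res:uoLt_closure_weak_unit_countable_sup_property}, and \cite[Proposition~6.5]{deng_de_jeu_UNPUBLISHED:2020b}. You have merely made explicit the four hypotheses that need to be verified for $\Orth(E)$ and indicated where each is used; in particular, your identification of \cite[Proposition~6.5]{deng_de_jeu_UNPUBLISHED:2020b} as supplying the countable sup property for $\Orth(E)$ from the two assumptions on $E$ matches the paper exactly.
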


We continue by considering cases where the enveloping vector lattice (or vector lattice algebra) has a strong order unit.

It is known that the \oadhtext\ of a vector sublattice of a De\-de\-kind complete Banach lattice $E$ with a strong order unit can be a proper sublattice of its \uoadhtext; see \cite[Lemma~2.6]{gao_leung:2018} for details. When the vector sublattice contains a strong order unit of $E$, however, then this cannot occur, not even in general vector lattices. This is shown by the following preparatory result.

\begin{lemma}\label{res:o=uo}
	Let $E$ be a vector lattice with a strong order unit. Suppose that $F$ is a vector sublattice of $E$ that contains a strong order unit of $E$. Then $\oadh{F}=\uoadh{F}$ and $\soadh{F}=\suoadh{F}$ in $E$.
\end{lemma}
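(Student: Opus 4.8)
The plan is to prove only the two nontrivial inclusions $\uoadh{F}\subseteq\oadh{F}$ and $\suoadh{F}\subseteq\soadh{F}$, since the reverse inclusions $\oadh{F}\subseteq\uoadh{F}$ and $\soadh{F}\subseteq\suoadh{F}$ hold in any vector lattice because order convergence implies unbounded order convergence to the same limit (and likewise for the sequential versions). Throughout, I would fix a strong order unit $e$ of $E$ with $e\in F$, which exists by hypothesis.

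First I would take $x\in\uoadh{F}$ and a net $\net$ in $F$ with $x_\alpha\uoconv x$ in $E$. Since $e$ is a strong order unit, there is a $\lambda>0$ with $\abs{x}\leq\lambda e$, and the idea is to truncate the net by setting $y_\alpha\coloneqq(x_\alpha\vee(-\lambda e))\wedge(\lambda e)$. Because $F$ is a vector sublattice containing $e$, we have $\pm\lambda e\in F$, so each $y_\alpha$ lies in $F$; moreover $-\lambda e\leq y_\alpha\leq\lambda e$, so the net $\netgen{y_\alpha}{\alpha\in\is}$ is order bounded in $E$.

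The key step is to check that truncation is non-expansive, i.e.\ that $\abs{y_\alpha-x}\leq\abs{x_\alpha-x}$ for every $\alpha\in\is$. This follows from the Birkhoff inequalities $\abs{a\vee c-b\vee c}\leq\abs{a-b}$ and $\abs{a\wedge c-b\wedge c}\leq\abs{a-b}$, applied to the truncation map $z\mapsto(z\vee(-\lambda e))\wedge(\lambda e)$, together with the fact that $\abs{x}\leq\lambda e$ makes $x$ a fixed point of this map. Consequently $\abs{y_\alpha-x}\wedge u\leq\abs{x_\alpha-x}\wedge u\oconv 0$ in $E$ for every $u\in\pos{E}$, so that $y_\alpha\uoconv x$ as well. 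Since $\netgen{y_\alpha}{\alpha\in\is}$ is order bounded and unbounded order convergent to $x$, it is in fact order convergent to $x$, because order boundedness makes the two notions coincide. As each $y_\alpha\in F$, this yields $x\in\oadh{F}$, establishing $\uoadh{F}\subseteq\oadh{F}$ and hence $\oadh{F}=\uoadh{F}$.

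Finally, the equality $\soadh{F}=\suoadh{F}$ is obtained by running the identical argument with sequences in place of nets: if $x\in\suoadh{F}$ is the uo-limit of a sequence $\seqxn$ in $F$, then the truncated sequence $\seq{(x_n\vee(-\lambda e))\wedge(\lambda e)}{n}$ lies in $F$, is order bounded, and uo-converges, hence order converges, to $x$, so $x\in\soadh{F}$. I expect the only genuine subtlety to be the contraction estimate for truncation; everything else is a routine combination of the definition of a strong order unit, the sublattice property of $F$, and the coincidence of order and unbounded order convergence for order bounded nets.
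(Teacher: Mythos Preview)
Your argument is correct and is essentially identical to the paper's proof: both truncate the uo-convergent net at $\pm\lambda e$ to obtain an order bounded net in $F$ that still uo-converges, hence order converges, to $x$. The only minor difference is that you justify $y_\alpha\uoconv x$ via the Birkhoff contraction inequalities, whereas the paper simply invokes the uo-continuity of the lattice operations to write $y_\alpha\uoconv(-\lambda e\vee x)\wedge\lambda e=x$; these amount to the same thing.
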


\begin{proof}
	 We prove that $\oadh{F}=\uoadh{F}$. It is clear that $\oadh{F}\subseteq\uoadh{F}$. For the reverse inclusion, we choose a positive strong order unit $e$ of $E$ such that $e\in F$. Take $x\in \uoadh{F}$, and let $\net$ be a net in $F$ such that $x_\alpha\uoconv x$ in $E$. There exists a $\lambda\in\RR_{\geq 0}$ such that $\abs{x}\leq \lambda e$. For $\alpha\in\is$, set $y_\alpha\coloneqq (-\lambda e\vee x_\alpha) \wedge \lambda e$. Clearly, $(y_\alpha)_\alpha\subseteq F$ and $y_\alpha\uoconv (-\lambda e\vee x)\wedge \lambda e=x$. Since the net $\netgen{y_\alpha}{\alpha\in\is}$ is order bounded in $E$, we have that $y_\alpha\oconv x$ in $E$. Hence $x\in\oadh{F}$. We conclude that $\uoadh{F}\subseteq\oadh{F}$.
	
	 The proof for the sequential adherences is a special case of the above one.
\end{proof}

\begin{remark}
	For comparison, we recall that, for a \emph{regular} vector sublattice $F$ of a  vector lattice $E$, it is always the case that $\oadh{F} =\uoadh{F}$ in $E$, and that these coinciding subsets are order closed subsets of $E$; see \cite[Theorem~2.13]{gao_leung:2018}. For this to hold, no assumptions on $E$ are necessary.
\end{remark}

The following is immediate from \cref{res:uoLt_closure_weak_unit_countable_sup_property} and \cref{res:o=uo}.

\begin{theorem}\label{res:uoLt_closure_strong_unit_countable_sup_property}
	Let $E$ be a Dedekind complete vector lattice with the countable sup property and a strong order unit. Suppose that $E$ admits a \necun\ Hausdorff \uoLtop \ $\uoLt_E$. Let $F$ be a vector sublattice of $E$ that contains a strong order unit of $E$. Then	
	\[
	\overline{F}^{\uoLt_E}=\soadh{F}=\oadh{F}=\suoadh{F}=\uoadh{F}
	\]
	in $E$.
\end{theorem}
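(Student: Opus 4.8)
The plan is to obtain this as a direct amalgamation of the two immediately preceding results, \cref{res:uoLt_closure_weak_unit_countable_sup_property} and \cref{res:o=uo}. First I would observe that, in an Archimedean vector lattice, every strong order unit is in particular a weak order unit; hence $E$ has a weak order unit, and all remaining hypotheses of \cref{res:uoLt_closure_weak_unit_countable_sup_property} (Dedekind completeness, the countable sup property, and the existence of $\uoLt_E$) are present by assumption. Applying that result to the vector sublattice $F$ gives
\[
\overline{F}^{\uoLt_E}=\suoadh{F}=\uoadh{F}
\]
in $E$.

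Next I would invoke \cref{res:o=uo}. Its hypotheses hold verbatim: $E$ has a strong order unit, and $F$ is a vector sublattice of $E$ containing a strong order unit of $E$. It therefore yields $\oadh{F}=\uoadh{F}$ and $\soadh{F}=\suoadh{F}$ in $E$.

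It remains only to chain the equalities. From \cref{res:uoLt_closure_weak_unit_countable_sup_property} we have $\uoadh{F}=\suoadh{F}=\overline{F}^{\uoLt_E}$; combining with \cref{res:o=uo} gives $\oadh{F}=\uoadh{F}=\overline{F}^{\uoLt_E}$ and $\soadh{F}=\suoadh{F}=\uoadh{F}$. Thus all five sets coincide, which is exactly
\[
\overline{F}^{\uoLt_E}=\soadh{F}=\oadh{F}=\suoadh{F}=\uoadh{F}
\]
in $E$. There is no genuine obstacle here: the entire mathematical content is carried by the two cited lemmas, and this statement is simply their conjunction. The only step warranting explicit mention is the elementary remark that a strong order unit is a weak order unit, which is what licenses the use of \cref{res:uoLt_closure_weak_unit_countable_sup_property}; everything else is a bookkeeping of equalities.
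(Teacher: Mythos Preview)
Your proposal is correct and follows exactly the paper's approach: the paper simply states that the theorem ``is immediate from \cref{res:uoLt_closure_weak_unit_countable_sup_property} and \cref{res:o=uo}'', and you have spelled out precisely this combination, including the elementary observation that a strong order unit is a weak order unit.
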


The following result follows from the combination of \cref{res:all_three_or_none}, \cref{res:uoLt_closure_strong_unit_countable_sup_property}, and \cite[Proposition~6.5]{deng_de_jeu:2021}. In view of \cite[Proposition~6.5]{deng_de_jeu:2021}, the natural condition to include is that $E$ have an at most countably infinite order basis, but it is easily verified fact that, for a Banach lattice, this property is equivalent to having a weak order unit.

\begin{corollary}\label{res:operators_uoLt_closure_strong_unit_countable_sup_property}
	Let $E$ be a Dedekind complete Banach lattice with the countable sup property and a weak order unit. Suppose that $E$ admits a \necun\ Hausdorff \uoLtop. Then $\Orth(E)$ admits a \necun\ Hausdorff \uoLtop\ $\uoLt_{\Orth(E)}$. Let $\oplat$ be a vector sublattice of $\Orth(E)$ that contains a strong order unit of $\Orth(E)$. Then	
	\[
	\overline{\oplat}^{\uoLt_{\Orth(E)}}=\soadh{\oplat}=\oadh{\oplat}=\suoadh{\oplat}=\uoadh{\oplat}
	\]
	in $\Orth(E)$.
\end{corollary}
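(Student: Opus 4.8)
The plan is to deduce the statement by applying \cref{res:uoLt_closure_strong_unit_countable_sup_property} with $E$ replaced by $\Orth(E)$ and $F$ replaced by $\oplat$. Once it is known that $\Orth(E)$ is a Dedekind complete vector lattice with the countable sup property and a strong order unit, that it admits a \necun\ Hausdorff \uoLtop, and that $\oplat$ is a vector sublattice of it containing a strong order unit, the conclusion is precisely the chain of equalities produced by that theorem. Thus the whole proof reduces to verifying these four hypotheses for $\Orth(E)$.

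First I would record that $\Orth(E)$ is Dedekind complete: since $E$ is Dedekind complete, $\obops(E)$ is Dedekind complete, and $\Orth(E)$ is a band in it and hence itself Dedekind complete. Next, because $E$ is a \emph{Banach} lattice, the identity operator $\Id$ is a strong order unit of $\Orth(E)$; this is the same fact --- the coincidence of the operator norm, the order unit norm $\norm{\,\cdot\,}_\Id$, and the regular norm on $\Orth(E)$ --- that underlies \cref{res:algebra_in_orth_uoLt_closure_separating_order_dual}. The existence of a \necun\ Hausdorff \uoLtop\ $\uoLt_{\Orth(E)}$ on $\Orth(E)$ follows from the hypothesis that $E$ admits one, together with the equivalence of parts~\partref{res:all_three_or_none1} and~\partref{res:all_three_or_none2} of \cref{res:all_three_or_none}.

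The one remaining point --- and the only one requiring genuine input --- is that $\Orth(E)$ has the countable sup property. Here I would use that, for a Banach lattice, possessing a weak order unit is equivalent to possessing an at most countably infinite order basis (a positive weak order unit furnishes a singleton order basis, and conversely a suitably weighted series built from a countable order basis is a weak order unit). Since $E$ has the countable sup property and, by this equivalence, an at most countably infinite order basis, \cite[Proposition~6.5]{deng_de_jeu_UNPUBLISHED:2020b} yields that $\Orth(E)$ has the countable sup property. With all four hypotheses verified, and with $\oplat$ containing a strong order unit of $\Orth(E)$ by assumption, \cref{res:uoLt_closure_strong_unit_countable_sup_property} applies to $\Orth(E)$ and $\oplat$ and delivers the asserted equalities in $\Orth(E)$.

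I do not expect a genuine obstacle: the argument is a verification of hypotheses rather than a construction. The point that most deserves care is that the passage from the weak-order-unit version in \cref{res:operators_uoLt_closure_weak_unit_countable_sup_property} to the present strong-order-unit version rests on two facts that both require $E$ to be a Banach lattice --- namely that $\Id$ is a \emph{strong} (and not merely weak) order unit of $\Orth(E)$, and that the weak order unit hypothesis on $E$ can be traded for the at most countably infinite order basis hypothesis needed to invoke \cite[Proposition~6.5]{deng_de_jeu_UNPUBLISHED:2020b}.
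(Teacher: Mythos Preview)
Your proposal is correct and follows essentially the same approach as the paper: the paper states that the result follows from the combination of \cref{res:all_three_or_none}, \cref{res:uoLt_closure_strong_unit_countable_sup_property}, and \cite[Proposition~6.5]{deng_de_jeu_UNPUBLISHED:2020b}, together with the observation that for a Banach lattice having a weak order unit is equivalent to having an at most countably infinite order basis. Your write-up spells out the remaining implicit verifications (Dedekind completeness of $\Orth(E)$ and $\Id$ being a strong order unit), but these are routine and in line with what the paper tacitly uses.
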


We now turn to closures and adherences of associative subalgebras of a class of \falgs\ with strong order units. For this, we need the following preparatory result.

\begin{lemma}\label{res:normo=o}
	Let $E$ be a Banach lattice, and let $A$ be a subset of $E$. Then $\soadh{A}=\soadh{\overline{A}}$ in $E$,  where $\overline{A}$ denotes the norm closure of $A$.
\end{lemma}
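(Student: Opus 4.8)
The plan is to prove the two inclusions of $\soadh{A}=\soadh{\overline{A}}$ separately. The inclusion $\soadh{A}\subseteq\soadh{\overline{A}}$ is immediate: since $A\subseteq\overline{A}$, every sequence in $A$ is a sequence in $\overline{A}$, and a sequence has the same order limit regardless of which superset it is considered to lie in. The entire content is therefore the reverse inclusion $\soadh{\overline{A}}\subseteq\soadh{A}$, and this is where the Banach lattice structure is essential. I would fix $x\in\soadh{\overline{A}}$ and a sequence $\seq{y_n}{n}$ in $\overline{A}$ with $y_n\oconv x$; by definition there is a net $\netgen{p_\gamma}{\gamma\in\Gamma}$ with $p_\gamma\downarrow 0$ such that for each $\gamma_0$ one has $\abs{y_n-x}\leq p_{\gamma_0}$ for all sufficiently large $n$. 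The goal is to replace $\seq{y_n}{n}$ by a sequence drawn from $A$ itself while preserving order convergence to $x$.

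Since each $y_n$ lies in the norm closure of $A$, I would choose $a_n\in A$ with $\norm{y_n-a_n}\leq 2^{-n}$ and put $u_n\coloneqq\abs{y_n-a_n}\in\pos{E}$, so that $\norm{u_n}\leq 2^{-n}$ and hence $\sum_n\norm{u_n}<\infty$. The key construction is the sequence of tail sums $w_m\coloneqq\sum_{n\geq m}u_n$, which converge in norm because $E$ is a Banach space. Using that the positive cone of a Banach lattice is norm closed, I would verify that $\seq{w_m}{m}$ is decreasing with $w_m\geq u_n$ for all $n\geq m$, and that $\norm{w_m}\to 0$. The norm estimate forces $w_m\downarrow 0$ in order: any lower bound $v$ of $\{w_m\}$ satisfies $0\leq v^+\leq w_m$, whence $\norm{v^+}=0$ and $v\leq 0$. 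This is precisely the step where the metric approximation $\norm{y_n-a_n}\to 0$ is upgraded to a genuine order-theoretic control object.

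With the two decreasing objects $\seq{w_m}{m}$ and $\netgen{p_\gamma}{\gamma\in\Gamma}$ in hand, I would exhibit a single control net witnessing $a_n\oconv x$. The triangle inequality gives $\abs{a_n-x}\leq u_n+\abs{y_n-x}\leq w_n+\abs{y_n-x}$. I would index a net over the product directed set $\mathbb{N}\times\Gamma$ by $q_{(m,\gamma)}\coloneqq w_m+p_\gamma$; this net is decreasing, and $\inf_{(m,\gamma)}(w_m+p_\gamma)=0$ follows by first fixing $\gamma$ and letting $m$ vary (so that any lower bound $v$ obeys $v\leq p_\gamma$) and then letting $\gamma$ vary, giving $q_{(m,\gamma)}\downarrow 0$. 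Finally, for fixed $(m_0,\gamma_0)$ and every $n$ that exceeds both $m_0$ and the threshold supplied by $\gamma_0$, monotonicity of $\seq{w_m}{m}$ yields $\abs{a_n-x}\leq w_{m_0}+p_{\gamma_0}=q_{(m_0,\gamma_0)}$, which is exactly $a_n\oconv x$ with the control net $\netgen{q_{(m,\gamma)}}{(m,\gamma)\in\mathbb{N}\times\Gamma}$. Hence $\seq{a_n}{n}\subseteq A$ order converges to $x$, so $x\in\soadh{A}$.

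The main obstacle is conceptual rather than computational: in a Banach lattice norm convergence does not imply order convergence, so one cannot simply observe $\norm{a_n-y_n}\to 0$ and conclude $\abs{a_n-y_n}\oconv 0$. The device that circumvents this is the summability choice $\norm{y_n-a_n}\leq 2^{-n}$, which turns the error terms into a norm-summable positive series whose tails $w_m$ decrease to $0$ in order; the one remaining delicate point is the fusion of $\seq{w_m}{m}$ with the given control net $\netgen{p_\gamma}{\gamma\in\Gamma}$ over a product directed set, where one must check that the infimum of the sum is $0$. I would note that no Dedekind completeness of $E$ is needed\textemdash only that $E$ is a Banach lattice, so that its positive cone is norm closed and absolutely convergent series converge.
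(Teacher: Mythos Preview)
Your proof is correct and follows essentially the same route as the paper's: choose approximants in $A$ with summable norm errors, form the tail sums $w_m=\sum_{n\geq m}\abs{y_n-a_n}$ to obtain a decreasing sequence with $w_m\downarrow 0$, and combine this with the original control via the triangle inequality. The paper simply writes $\abs{y_n-x_n}+\abs{x_n-x}\oconv 0$ and leaves the sum-of-order-convergent-sequences step implicit, whereas you spell out the product-directed-set control net $q_{(m,\gamma)}=w_m+p_\gamma$; this extra explicitness is harmless but not needed, since both summands are order convergent along the same index $n$.
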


\begin{proof}
	We need to prove only that $\soadh{\overline{A}}\subseteq \soadh{A}$. For this, we may suppose that $A\neq\emptyset$.
	Take $x\in \soadh{\overline{A}}$ and a sequence $\seq{x_n}{n}$ in $\overline{A}$ such that $x_n \conv{\sigma\o} x$ in $E$. For $n\geq 1$, take an $y_n\in A$ such that $\norm{y_n-x_n}\leq 2^{-n}$. For $n\geq 1$, define $z_n$ by setting $z_n\coloneqq \sum_{m=n}^{\infty} \abs{y_m-x_m}$, which is meaningful since the series is absolutely convergent. It is clear that $z_n\downarrow$. Since $\norm{z_n}\leq 2^{-n+1}$, we have $z_n\downarrow 0$ in $E$. The fact that $\abs{y_n-x_n}\leq z_n$ for $n\geq 1$ then shows that $\abs{y_n-x_n}\conv{\sigma\o} 0$ in $E$.
	From
	\[
	0\leq \abs{y_n-x}\leq \abs{y_n-x_n}+\abs{x_n-x}\conv{\sigma\o} 0,
	\]
	we then see that $y_n\conv{\sigma\o} x$ in $E$. Hence $x\in\soadh{A}$, as desired.
\end{proof}

\begin{theorem}\label{res:uoLt_closure_algebra_strong_unit_countable_sup_property}
	Let $\opalg$ be a Dedekind complete unital \falg\ with the countable sup property, such that its identity element $e$ is also a strong order unit of $\opalg$, and such that it is complete in the submultiplicative order unit norm $\norm{\,\cdot\,}_e$ on $\opalg$. Suppose that $\opalg$ admits a \necun\ Hausdorff \uoLtop \ $\uoLt_{\opalg}$. Let $\opalgtwo$ be an associative subalgebra of $\opalg$ such that $\overline{\opalgtwo}^{{\norm{\,\cdot\,}_e}}$  contains a strong order unit of $\opalg$. Then
\begin{align}
\begin{split}\label{eq:ten_sets_equal}	
	&\overline{\opalgtwo}^{\uoLt_{\opalg}}=\soadh{\opalgtwo}=\oadh{\opalgtwo}=\suoadh{\opalgtwo}=\uoadh{\opalgtwo}=	\\
	 &\overline{\overline{\opalgtwo}^{{\norm{\,\cdot\,}_e}}}^{\uoLt_{\opalg}}=\soadh{\overline{\opalgtwo}^{{\norm{\,\cdot\,}_e}}}=\oadh{\overline{\opalgtwo}^{{\norm{\,\cdot\,}_e}}}=\suoadh{\overline{\opalgtwo}^{{\norm{\,\cdot\,}_e}}}=\uoadh{\overline{\opalgtwo}^{{\norm{\,\cdot\,}_e}}}	 
\end{split}
\end{align}
	in $\opalg$.
\end{theorem}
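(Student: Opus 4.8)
The plan is to reduce everything to \cref{res:uoLt_closure_strong_unit_countable_sup_property}, applied to the norm closure of $\opalgtwo$, with \cref{res:normo=o} supplying the link back to $\opalgtwo$ itself. Write $C\coloneqq\overline{\opalgtwo}^{\norm{\,\cdot\,}_e}$ for this norm closure. Since $\opalg$ is complete in the order unit norm $\norm{\,\cdot\,}_e$, it is a Banach lattice, and \cite[Theorem~6.1]{deng_de_jeu_UNPUBLISHED:2020b} shows that $C$, being the norm closure of an associative subalgebra, is a Banach $f\!$-subalgebra of $\opalg$; in particular $C$ is a vector sublattice of $\opalg$. By hypothesis $C$ contains a strong order unit of $\opalg$.

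With this in hand, I would first apply \cref{res:uoLt_closure_strong_unit_countable_sup_property} to the vector sublattice $C$ of $\opalg$. All the hypotheses are met: $\opalg$ is Dedekind complete, has the countable sup property, possesses the strong order unit $e$, admits $\uoLt_{\opalg}$, and $C$ contains a strong order unit of $\opalg$. The conclusion is precisely that the five sets in the second line of \cref{eq:ten_sets_equal}, namely $\overline{C}^{\uoLt_{\opalg}}$, $\soadh{C}$, $\oadh{C}$, $\suoadh{C}$, and $\uoadh{C}$, all coincide. Next I would invoke \cref{res:normo=o} with the Banach lattice $\opalg$ and $A=\opalgtwo$, which gives $\soadh{\opalgtwo}=\soadh{C}$; this identifies the $\sigma$-order adherence of $\opalgtwo$ (the second set of the first line) with the common value of the second line.

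It then remains to close the first line, which I would do by a squeeze anchored at $\soadh{\opalgtwo}$. The standard implications among the convergences (order convergence implies unbounded order convergence, which implies $\uoLt_{\opalg}$ convergence, and a sequence is a net) give the chain
\[
\soadh{\opalgtwo}\subseteq\oadh{\opalgtwo}\subseteq\uoadh{\opalgtwo}\subseteq\overline{\opalgtwo}^{\uoLt_{\opalg}}
\]
as well as $\soadh{\opalgtwo}\subseteq\suoadh{\opalgtwo}\subseteq\uoadh{\opalgtwo}$. Monotonicity of the $\uoLt_{\opalg}$-closure together with $\opalgtwo\subseteq C$ yields $\overline{\opalgtwo}^{\uoLt_{\opalg}}\subseteq\overline{C}^{\uoLt_{\opalg}}$, and by the two previous steps $\overline{C}^{\uoLt_{\opalg}}=\soadh{C}=\soadh{\opalgtwo}$. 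Hence the chain closes on itself, forcing every set of the first line to equal $\soadh{\opalgtwo}$, which in turn equals $\soadh{C}$ and thus the common value of the second line. All ten sets therefore coincide.

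The argument is bookkeeping of inclusions once two facts are secured, and I do not expect a genuine obstacle. The only points requiring care are recognizing that the norm closure $C$ of the algebra $\opalgtwo$ is a vector sublattice containing a strong order unit, so that \cref{res:uoLt_closure_strong_unit_countable_sup_property} applies to $C$, and using \cref{res:normo=o} to transport the equality at the level of $\sigma$-order adherences from $C$ back to $\opalgtwo$; since $\soadh{\,\cdot\,}$ is the smallest of the adherences involved, it is exactly the right anchor for the squeeze.
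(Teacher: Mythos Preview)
Your proposal is correct and follows essentially the same route as the paper: establish that $C=\overline{\opalgtwo}^{\norm{\,\cdot\,}_e}$ is a vector sublattice via \cite[Theorem~6.1]{deng_de_jeu_UNPUBLISHED:2020b}, apply \cref{res:uoLt_closure_strong_unit_countable_sup_property} to obtain the second line, use \cref{res:normo=o} for $\soadh{\opalgtwo}=\soadh{C}$, and close the chain of inclusions. The only cosmetic difference is that the paper invokes the equality $\overline{\opalgtwo}^{\uoLt_{\opalg}}=\overline{C}^{\uoLt_{\opalg}}$ (from the proof of \cref{res:f-algebra_uoLt_closure_separating_order_dual}, using that $\norm{\,\cdot\,}_e$-convergence implies $\uoLt_{\opalg}$-convergence), whereas you use only the inclusion $\overline{\opalgtwo}^{\uoLt_{\opalg}}\subseteq\overline{C}^{\uoLt_{\opalg}}$ from monotonicity of closure; either suffices for the squeeze.
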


\begin{proof}
	We know from \cite[Theorem~6.1]{deng_de_jeu:2021} that $\overline{\opalgtwo}^{{\norm{\,\cdot\,}_e}}$ is a Banach \fsubalg\ of $\opalg$. Then \cref{res:uoLt_closure_strong_unit_countable_sup_property} shows that all equalities in the second line of \cref{eq:ten_sets_equal} hold. Furthermore, it is obvious that
	\[
	\soadh{\opalgtwo}\subseteq\oadh{\opalgtwo}\subseteq\uoadh{\opalgtwo}\subseteq\overline{\opalgtwo}^{\uoLt_{\opalg}}
	\]
	and that
	\[
	\soadh{\opalgtwo}\subseteq\suoadh{\opalgtwo}\subseteq\overline{\opalgtwo}^{\uoLt_{\opalg}}.
	\]
	Using that $\soadh{\opalgtwo}=\soadh{\overline{\opalgtwo}^{{\norm{\,\cdot\,}_e}}}$ by \cref{res:normo=o} and that\textemdash see the proof of \cref{res:f-algebra_uoLt_closure_separating_order_dual}\textemdash we also know that $\overline{\opalgtwo}^{\uoLt_{\opalg}}=\overline{\overline{\opalgtwo}^{{\norm{\,\cdot\,}_e}}}^{\uoLt_{\opalg}}$, it then follows that all sets in \cref{eq:ten_sets_equal} are equal.
	\end{proof}

The following is now clear from \cref{res:all_three_or_none}, \cref{res:uoLt_closure_algebra_strong_unit_countable_sup_property}, and \cite[Proposition~6.5]{deng_de_jeu:2021}.

\begin{corollary}\label{res:uoLt_closure_algebra_othormorphisms_strong_unit_countable_sup_property}
	Let $E$ be a Dedekind complete Banach lattice with the countable sup property and a weak order unit. Suppose that $E$ admits a \necun\ Hausdorff \uoLtop. Then $\Orth(E)$ admits a \necun\ Hausdorff \uoLtop\ $\uoLt_{\Orth(E)}$. Let $\opalg$ be an associative subalgebra of $\Orth(E)$ such that $\overline{\opalg}^{{\norm{\,\cdot\,}}}$ contains a strong order unit of $\Orth(E)$. Then
	\begin{align}
		\begin{split}
			&\overline{\opalg}^{\uoLt_{\opalg}}=\soadh{\opalg}=\oadh{\opalg}=\suoadh{\opalg}=\uoadh{\opalg}=	\\
			 &\overline{\overline{\opalg}^{{\norm{\,\cdot\,}}}}^{\uoLt_{\opalg}}=\soadh{\overline{\opalg}^{{\norm{\,\cdot\,}}}}=\oadh{\overline{\opalg}^{{\norm{\,\cdot\,}}}}=\suoadh{\overline{\opalg}^{{\norm{\,\cdot\,}}}}=\uoadh{\overline{\opalg}^{{\norm{\,\cdot\,}}}}	 
		\end{split}
	\end{align}
	in $\Orth(E)$; here $\norm{\,\cdot\,}$ denotes the coinciding operator norm, order unit norm with respect to the identity operator, and regular norm on $\Orth(E)$.
\end{corollary}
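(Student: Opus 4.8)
The plan is to deduce this corollary directly from \cref{res:uoLt_closure_algebra_strong_unit_countable_sup_property} by specialising the abstract setting there to $\Orth(E)$: I would take the unital $f\!$-algebra $\opalg$ of that theorem to be $\Orth(E)$, and the associative subalgebra $\opalgtwo$ to be the subalgebra denoted $\opalg$ in the present statement. The submultiplicative order unit norm $\norm{\,\cdot\,}_e$ appearing in the theorem then becomes the norm $\norm{\,\cdot\,}$ of the corollary, and the displayed chain of ten equalities is exactly \cref{eq:ten_sets_equal} transported into $\Orth(E)$. The entire task thus reduces to verifying that $\Orth(E)$ meets all the hypotheses that \cref{res:uoLt_closure_algebra_strong_unit_countable_sup_property} imposes on $\opalg$.

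First I would record the purely algebraic and norm-theoretic hypotheses. Since $E$ is Dedekind complete, $\Orth(E)$ is a Dedekind complete unital $f\!$-algebra whose multiplicative identity is the identity operator $\Id$; every orthomorphism $T$ satisfies $\abs{T}\leq\norm{T}\,\Id$, so $\Id$ is a positive strong order unit of $\Orth(E)$; and for a Banach lattice $E$ the operator norm, the order unit norm $\norm{\,\cdot\,}_\Id$, and the regular norm on $\Orth(E)$ coincide, are submultiplicative, and are complete. These standard facts are collected in the overview at the start of \cite[Section~6]{deng_de_jeu_UNPUBLISHED:2020b}, and they are precisely what identifies the norm $\norm{\,\cdot\,}$ in the corollary with the norm $\norm{\,\cdot\,}_e$ of the theorem.

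Next I would supply the two remaining hypotheses, which is where the cited auxiliary results enter. That $\Orth(E)$ admits a \necun\ Hausdorff \uoLtop\ $\uoLt_{\Orth(E)}$ is exactly the implication from part~\partref{res:all_three_or_none1} to part~\partref{res:all_three_or_none2} of \cref{res:all_three_or_none}, applied to the hypothesis that $E$ admits one. That $\Orth(E)$ has the countable sup property follows from \cite[Proposition~6.5]{deng_de_jeu_UNPUBLISHED:2020b}: this is the step that actually consumes the assumptions on $E$, since $E$ has the countable sup property and, for a Banach lattice, possession of a weak order unit is equivalent to possession of an at most countably infinite order basis (as already noted in the remark following \cref{res:operators_uoLt_closure_strong_unit_countable_sup_property}), which is the form in which Proposition~6.5 is phrased.

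With every hypothesis of \cref{res:uoLt_closure_algebra_strong_unit_countable_sup_property} verified, and noting that the assumption that $\overline{\opalg}^{\norm{\,\cdot\,}}$ contains a strong order unit of $\Orth(E)$ is exactly the hypothesis placed on $\opalgtwo$ there, the theorem applies verbatim and delivers the asserted chain of ten equalities in $\Orth(E)$. I do not expect a genuine mathematical obstacle here: the whole content is the correct bookkeeping of which standard structural property of $\Orth(E)$ is being invoked at each point, and the only step that warrants a moment's care is the transfer of the countable sup property and of the weak-order-unit/order-basis condition from $E$ up to $\Orth(E)$ through \cite[Proposition~6.5]{deng_de_jeu_UNPUBLISHED:2020b}.
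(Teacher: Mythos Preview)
Your proposal is correct and follows essentially the same approach as the paper: the paper's proof consists of the single sentence that the result is clear from \cref{res:all_three_or_none}, \cref{res:uoLt_closure_algebra_strong_unit_countable_sup_property}, and \cite[Proposition~6.5]{deng_de_jeu_UNPUBLISHED:2020b}, and you have accurately unpacked exactly which hypothesis of \cref{res:uoLt_closure_algebra_strong_unit_countable_sup_property} each of these ingredients supplies. Your identification of the weak-order-unit/countable-order-basis equivalence as the bridge to \cite[Proposition~6.5]{deng_de_jeu_UNPUBLISHED:2020b} is also precisely how the paper handles this point (cf.\ the remark preceding \cref{res:operators_uoLt_closure_strong_unit_countable_sup_property}).
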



\subsection*{Acknowledgements} During this research, the first author was supported by a grant of China Scholarship Council (CSC). The authors thank Ben de Pagter for pointing out that a (not necessarily unital) positive algebra homomorphism between two unital Archimedean \falgs\ is automatically a vector lattice homomorphism, which has led to \cref{res:orthisorth_linear},  \cref{res:orthisorth}, and \cref{res:commutant_of_order_continuous_operators_is_lattice} being in a stronger form than in the original manuscript. They also thank the anonymous referee for their careful reading of the manuscript and the many detailed suggestions for improvement of the presentation.


\urlstyle{same}

\bibliographystyle{plain}
\bibliography{general_bibliography}

\end{document}